\newcommand{\assign}{:=}
\newcommand{\mathd}{\mathrm{d}}
\newcommand{\nobracket}{}
\newcommand{\nocomma}{}
\newcommand{\noplus}{}
\newcommand{\nosymbol}{}
\newcommand{\tmcolor}[2]{{\color{#1}{#2}}}
\newcommand{\tmem}[1]{{\em #1\/}}
\newcommand{\tmmathbf}[1]{\ensuremath{\boldsymbol{#1}}}
\newcommand{\tmname}[1]{\textsc{#1}}
\newcommand{\tmop}[1]{\ensuremath{\operatorname{#1}}}
\newcommand{\tmperson}[1]{\textsc{#1}}
\newcommand{\tmstrong}[1]{\textbf{#1}}
\newcommand{\tmtextit}[1]{{\itshape{#1}}}
\newcommand{\tmtextsc}[1]{{\scshape{#1}}}
\newcommand{\tmtextup}[1]{{\upshape{#1}}}
\newenvironment{descriptionaligned}{\begin{description} }{\end{description}}
\newtheorem{theorem} {\sc  Theorem\rm} [section]
\newtheorem{corollary} [theorem] {\sc  Corollary\rm}
\newtheorem{proposition} [theorem] {\sc  Proposition\rm}
\newtheorem{definition}[theorem]{\sc  Definition\rm}
\newtheorem{proof}[theorem]{\sc  Proof\rm}
\newtheorem{remark}[theorem]{\sc  Remark\rm}
\newcommand{\RR}{\mathbb{R}}
\newcommand{\NN}{\mathbb{N}}
\newcommand{\SSbb}{\mathbb{S}}
\begin{document}

\title{Homogenization of Composite Ferromagnetic Materials}


\author[1]{Fran{\c c}ois Alouges \thanks{francois.alouges@polytechnique.edu}}
\author[1]{Giovanni Di Fratta\thanks{difratta@cmap.polytechnique.fr}}
\affil[1]{CMAP, \'Ecole Polytechnique, route de Saclay, 91128 Palaiseau Cedex, FRANCE}

\normalfont
\spaceskip=.9\fontdimen2\font
      plus .9\fontdimen3\font
     minus .9\fontdimen4

\begin{abstract}
Nowadays, nonhomogeneous and {\tmem{periodic}} ferromagnetic materials are the
subject of a growing interest. Actually such periodic configurations often
combine the attributes of the constituent materials, while sometimes, their
properties can be strikingly different from the properties of the different
constituents. These periodic configurations can be
therefore used to achieve physical and chemical properties difficult to
achieve with homogeneous materials. To predict the magnetic behavior of such
composite materials is of prime importance for
applications. 

The main objective of this paper is to perform, by means of {\tmem{$\Gamma$-convergence}}
and {\tmem{two-scale
convergence}}, 
a  rigorous derivation of the
  homogenized {\tmname{Gibbs-Landau}} free energy functional associated to a composite
periodic ferromagnetic material, i.e. a ferromagnetic material in which the
heterogeneities are periodically distributed inside the ferromagnetic media.
 We  thus describe the $\Gamma$-limit of the \tmtextsc{Gibbs-Landau} free energy
  functional, as the period over which the heterogeneities are distributed
  inside the ferromagnetic body shrinks to zero.
\end{abstract}

\section{Introduction}

Composite materials are an important class of natural or engineered
heterogeneous media, composed of a mixture of two or more constituents with
significantly different physical or chemical properties, firmly bonded
together, which remain separate and distinct within the finished structure.
Finding a model which considers the composite as a bulk and whose coefficients
and terms are computed from suitable averages of those of its constituents and
the geometry of the microstructure is the aim of homogenization theory. The
study of composites and their homogenization is a subject with a long history,
which has attracted the interest and the efforts of some of the most
illustrious names in science~{\cite{Markov1999}}: In 1824, {\tmname{Poisson}},
in his first {\tmem{M{\'e}moire sur la th{\'e}orie du
magn{\'e}tisme}}~{\cite{Poisson1826}}, put the basis of the theory of induced
magnetism assuming a model in which the body is composed of conducting spheres
embedded in a nonconducting material. This paper is the origin of the basic
models and ideas that prevailed in the theory of heterogeneous media in almost
all domains of continuum mechanics, for almost a century after its
appearance~{\cite{Markov1999}}. We refer the reader to the papers of
{\tmname{Markov}}~{\cite{Markov1999}} and
{\tmname{Landauer}}~{\cite{Landauer1978}} for more historical details.

Nowadays, nonhomogeneous and {\tmem{periodic}} ferromagnetic materials are the
subject of a growing interest. Actually such periodic configurations often
combine the attributes of the constituent materials, while sometimes, their
properties can be strikingly different from the properties of the different
constituents~{\cite{milton2002theory}}. These periodic configurations can be
therefore used to achieve physical and chemical properties difficult to
achieve with homogeneous materials. To predict the magnetic behavior of such
composite materials is of prime importance for
applications~{\cite{milton2002theory}}. From a mathematical point of view, the
study of composite materials, and more generally of media which involve
microstructures, is the main source of inspiration for the {\tmem{Mathematical
Theory of Homogenization}} which, roughly speaking, is a mathematical
procedure which aims at understanding heterogeneous materials with highly
oscillating heterogeneities (at the microscopic level) via an effective
model~{\cite{Nandakumaran2007}}.

The main objective of this paper is to perform, in the framework of
{\tmname{De Giorgi}}'s notion of {\tmem{$\Gamma$-convergence}}
{\cite{DeGiorgi1975}} and {\tmname{Allaire}}'s notion of {\tmem{two-scale
convergence}} {\cite{Allaire1992}} (see also the paper by {\tmname{Nguetseng}}
{\cite{Nguetseng1989}}), a mathematical homogenization study of the
{\tmname{Gibbs-Landau}} free energy functional associated to a composite
periodic ferromagnetic material, i.e. a ferromagnetic material in which the
heterogeneities are periodically distributed inside the ferromagnetic media.
Compared to earlier works related to the subject (see for instance {\cite{DeSimone2,DeSimone1,Haddar,Santugini2007}}) we consider here the full {\tmname{Gibbs-Landau}} functional for mixtures of different materials in the three-dimensional space.

\subsection{The Landau-Lifshitz micromagnetic theory of single-crystal
ferromagnetic materials}

According to {\tmname{Landau}} and {\tmname{Lifshitz}} micromagnetic theory of
ferromagnetic materials (see {\cite{Bertotti1998,Brown1,Landau1935,Mayergoyz}}), the states
of a rigid {\tmem{single-crystal}} ferromagnet, occupying a region $\Omega
\subseteq \RR^3$, and subject to a given external magnetic field
$\tmmathbf{h}_a$, are described by a vector field, the magnetization
$\mathbf{M}$, verifying the so-called {\tmem{fundamental constraint}}
{\tmem{of micromagnetic theory}}: A ferromagnetic body is always locally
saturated, i.e. there exists a positive constant $M_s$ such that
\begin{equation}
  | \mathbf{M} | = M_s (T) \text{ \ a.e. in } \Omega . \label{eq:fcmicromag}
\end{equation}
The {\tmem{saturation magnetization}} $M_s$ depends on the specific material
and on the temperature $T$, and vanishes above a temperature (characteristic
of each crystal type) known as the {\tmname{Curie}} point. Since we will
assume that the specimen is at a fixed temperature below the {\tmname{Curie}}
point of the material, the value $M_s$ will be regarded as a material
dependent function, and therefore as a constant function when working on
single-crystal ferromagnets. Due to the constraint (\ref{eq:fcmicromag}) in
the sequel we express the magnetization $\mathbf{M}$ under the form
$\mathbf{M} \assign M_s (T) \tmmathbf{m}$ where $\tmmathbf{m}: \Omega
\rightarrow \SSbb^2$ is a vector field which takes its values on the unit
sphere $\SSbb^2$ of $\RR^3$.

Even though the magnitude of the magnetization vector is constant in space, in
general it is not the case for its direction, and the observable states can be
mathematically characterized as local minimizers of the
{\tmname{Gibbs-Landau}} free energy functional associated to the
single-crystal ferromagnetic particle (using the notation of \cite{Bertotti1998,Mayergoyz})
\begin{align}
       \mathcal{G}_{\mathcal{L}} (\tmmathbf{m}) \assign & \underset{= : \mathcal{E}
  (\tmmathbf{m})}{\int_{\Omega} a_{\tmop{ex}} | \nabla \tmmathbf{m} |^2 \,\mathd
  \tau} + \underset{= : \mathcal{A} (\tmmathbf{m})}{\int_{\Omega}
  \varphi_{\tmop{an}} (\tmmathbf{m}) \, \mathd \tau} \nonumber\\ 
	&\quad\underset{= : \mathcal{W}
  (\tmmathbf{m})}{- \frac{{\mu}_0}{2} \int_{\Omega}
  \tmmathbf{h}_{\text{d}} [M_s \tmmathbf{m}] \cdot M_s \tmmathbf{m} \,\mathd
  \tau}  \underset{= : \mathcal{Z} (\tmmathbf{m})}{-{\mu}_0 \int_{\Omega}
  \tmmathbf{h}_a \cdot M_s \tmmathbf{m}\, \mathd \tau}\; .  \label{eq:GLunorm}
\end{align}
The first term, $\mathcal{E} (\tmmathbf{m})$, called {\tmem{exchange
energy}}, penalizes spatial variations of $\tmmathbf{m}$. The factor
$a_{\tmop{ex}}$ in the term is a phenomenological positive material constant
which summarizes the effect of (usually very) short-range exchange
interactions.

The second term, $\mathcal{A} (\tmmathbf{m})$, or the {\tmem{anisotropy
energy}}, models the existence of preferred directions for the magnetization
(the so-called {\tmem{easy axes}}), which usually depend on the crystallographic structure of the material.
The anisotropy energy density
$\varphi_{\tmop{an}} : \SSbb^2 \rightarrow \RR^+$ is assumed to be a
non-negative even and globally lipschitz continuous function, that vanishes
only on a finite set of unit vectors (the {\tmem{easy axes}}).

The third term, $\mathcal{W} (\tmmathbf{m})$, is called the
{\tmem{magnetostatic self-energy}}, and is the energy due to the (dipolar)
magnetic field, also known in literature as the stray field,
$\tmmathbf{h}_{\text{d}} [\tmmathbf{m}]$ generated by $\tmmathbf{m}$. From the
mathematical point of view, assuming $\Omega$ to be open, bounded and with a
Lipschitz boundary, a given magnetization $\tmmathbf{m} \in L^2 \left( \Omega,
\mathbb{\RR}^3 \right)$ generates the stray field $\tmmathbf{h}_{\mathd}
[\tmmathbf{m}] = \nabla u_{\tmmathbf{m}}$ where the potential
$u_{\tmmathbf{m}}$ solves:
\begin{equation}
  \Delta u_{\tmmathbf{m}} = - \tmop{div} (\tmmathbf{m} \chi_{\Omega}) \text{ \
  in \ } \mathcal{D}^{\prime} \left( \RR^3 \right) . \label{eq:potential}
\end{equation}
In (\ref{eq:potential}) we have indicated with $\tmmathbf{m} \chi_{\Omega}$
the extension of $\tmmathbf{m}$ to $\RR^3$ that vanishes outside $\Omega$.
{\tmname{Lax-Milgram}} theorem guarantees that equation (\ref{eq:potential})
possesses a unique solution in the {\tmname{Beppo-Levi}} space:
\begin{equation}
  BL^1\left(\mathbb{\RR}^3 \right) = \left\{ u \in \mathcal{D}^{\prime}
  ( \RR^3 ) \, : \, \frac{u (\cdot)}{\sqrt{1 + | \cdot |^2}} \in
  L^2 ( \mathbb{\RR}^3) \text{, \ } \nabla u \in L^2 (
  \mathbb{\RR}^3, \mathbb{\RR}^3 ) \right\} .
\end{equation}
Eventually, the fourth term $\mathcal{Z} (\tmmathbf{m})$, is called the
{\tmem{interaction energy}} (or \tmem{Zeeman energy}), and models the tendency of a specimen to have its
magnetization aligned with the external field $\tmmathbf{h}_a$, assumed to be
unaffected by variations of $\tmmathbf{m}$.

The competition of those four terms explain most of the striking pictures of
the magnetization that ones can see in most ferromagnetic material
{\cite{Hubert1998}}, in particular the so-called {\tmem{domain structure}},
that is large regions of uniform or slowly varying magnetization (the
{\tmem{magnetic domains}}) separated by very thin transition layers (the
{\tmem{domain walls}}).

\subsection{The Gibbs-Landau energy functional associated to composite
ferromagnetic materials}

Physically speaking, when considering a ferromagnetic body composed of several
magnetic materials (i.e. a non single-crystal ferromagnet) a new mathematical
model has to be introduced. In fact, as far as the ferromagnet is no more a
single crystal, the material depending functions $a_{\tmop{ex}}, M_s (T)$ and
$\varphi_{\tmop{an}}$ are no longer constant on the region $\Omega$ occupied
by the ferromagnet. Moreover one has to describe the local interactions of two
grains with different magnetic properties at their touching interface
{\cite{Acerbi2004}}.

From a mathematical point of view, this latter requirement is usually taken
into account in two different ways. Either one adds to the model a surface
energy term which penalizes jumps of the magnetization direction
$\tmmathbf{m}$ at the interface of both grains, or, and we stick on this later
on, one simply considers a {\tmem{strong coupling}}, meaning that the
direction of the magnetization does not jump through an interface. We insist
on the fact that only the direction is continuous at an interface while the
magnitude $M_s$ $\tmop{is} \tmop{obviously} \tmop{discontinuous} \nosymbol .$
Therefore, the natural mathematical setting for the problem turns out to be
characterized by the assumption that the magnetization direction
$\tmmathbf{m}$ is in the ``weak'' {\tmname{Sobolev}} metric space $\left( H^1
( \Omega, \SSbb^2 ), d_{L^2 ( \Omega, \SSbb^2 )}
\right)$, i.e. on the metric subspace of $H^1 ( \Omega, \RR^3 )$
constituted by the functions constrained to take values on the unit sphere of
$\RR^3$ and endowed with the $L^2 (\Omega)$ metric. It is in this framework
that we will conduct our work from now on.  
	\begin{figure}
		\centering
			\includegraphics[width=0.6\textwidth]{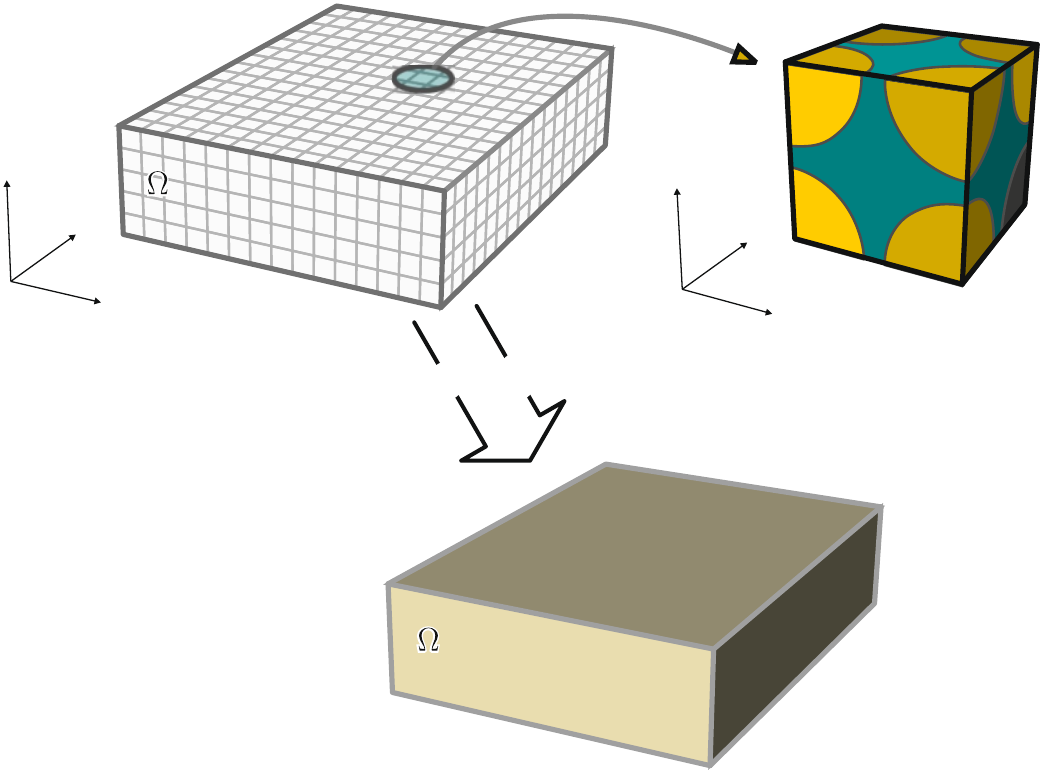} 
		 \caption{\label{fig1}If we assume that the heterogeneities are evenly
  distributed inside the ferromagnetic media $\Omega$, we can model the
  material as periodic. As illustrated in the figure, this means that we can
  think of the material as being built up of small identical cubes
  $Q_{\varepsilon}$, the side length of which we call $\varepsilon$.}
	\end{figure}
We start by recalling the basic idea of the mathematical theory of
homogenization. Let $\Omega \subset \RR^3$ be the region occupied by the
composite material. If we assume that the heterogeneities are regularly
distributed, we can model the material as periodic. As illustrated in
Fig.\ref{fig1}, this means that we can think of the material as being built up
of small identical cubes, the side length of which being called $\varepsilon$.
Let $Q = [0, 1]^3$ be the unit cube of $\RR^3$. We let for $y \in Q \text{, }
a_{\tmop{ex}} (y), M_s (y), \varphi_{\tmop{an}} (y, \tmmathbf{m})$ be the
periodic repetitions of the functions that describe how the exchange constant
$a_{\tmop{ex}}$, the saturation magnetization $M_s$ and the anisotropy density
energy $\varphi_{\tmop{an}} (y, \tmmathbf{m})$ vary over the representative
cell $Q$ (see Fig. \ref{fig1}). Substituting $x / \varepsilon$ for $y$, we
obtain the {\guillemotleft}two-scale{\guillemotright} functions
$a_{\varepsilon} (x) \assign a_{\tmop{ex}} (x / \varepsilon), M_{\varepsilon}
(x) \assign M_s (x / \varepsilon)$ and $\varphi_{\varepsilon} (x,
\tmmathbf{m}) \assign \varphi_{\tmop{an}} (x / \varepsilon, \tmmathbf{m})$
that oscillate periodically with period $\varepsilon$ as the variable $x$ runs
through $\Omega$, describing the oscillations of the material dependent
parameters of the composite. At every scale $\varepsilon$, the energy
associated to the $\varepsilon$-heterogeneous ferromagnet, will be given by
the following generalized {\tmname{Gibbs}}-{\tmname{Landau}} energy functional
\begin{align}
  \mathcal{G}_{\mathcal{L}}^{\varepsilon} (\tmmathbf{m}) \assign &\underset{= :
  \mathcal{E}_{\varepsilon} (\tmmathbf{m})}{\int_{\Omega} a_{\varepsilon} |
  \nabla \tmmathbf{m} |^2 \,\mathd \tau} + \underset{= :
  \mathcal{A}_{\varepsilon} (\tmmathbf{m})}{\int_{\Omega}
  \varphi_{\varepsilon} (\cdot, \tmmathbf{m}) \,\mathd \tau}  \nonumber\\
	&\quad\underset{= :\mathcal{W}_{\varepsilon} (\tmmathbf{m})}{- \frac{{\mu}_0}{2}
  \int_{\Omega} \tmmathbf{h}_{\text{d}} [M_{\varepsilon} \tmmathbf{m}] \cdot
  M_{\varepsilon} \tmmathbf{m} \,\mathd \tau}  \underset{= :
  \mathcal{Z}_{\varepsilon} (\tmmathbf{m})}{-{\mu}_0 \int_{\Omega}
  \tmmathbf{h}_a \cdot M_{\varepsilon} \tmmathbf{m} \,\mathd \tau} 
  \label{eq:GLComposites} .
\end{align}

The asymptotic $\Gamma$-convergence analysis of the family of functionals
$(\mathcal{G}_{\mathcal{L}}^{\varepsilon})_{\varepsilon \in \RR^+}$ as
$\varepsilon$ tends to 0, is the object of the present paper.

\subsection{Statement of the main result}

The main purpose of this paper is to analyze, by the means of both
$\Gamma$-convergence and two-scale convergence techniques, the asymptotic
behavior, as $\varepsilon \rightarrow 0$, of the family of Gibbs-Landau free
energy functionals $(\mathcal{G}_{\mathcal{L}}^{\varepsilon})_{\varepsilon \in
\RR^+}$ expressed by (\ref{eq:GLComposites}). Let us make the statement more
precise.

We consider the unit sphere $\SSbb^2$ of $\mathbbm{R}^3$ and, for every $s
\in \SSbb^2$, the tangent space of $\SSbb^2$ at a point $s$ will be denoted by
$T_s \left( \SSbb^2 \right)$. The class of admissible maps we are interested
in is defined as
\[ H^1 ( \Omega, \SSbb^2 ) \assign \left\{ \tmmathbf{m} \in H^1
   ( \Omega, \RR^3 ) \; : \; \tmmathbf{m} (x) \in \SSbb^2 \text{
   for $\tau$-a.e. } x \in \Omega \right\}, \]
where we have denoted by $\tau$ the Lebesgue measure on $\RR^3$. 
We consider
$H^1 ( \Omega, \SSbb^2 )$ as a metric space endowed with the metric
structure induced by the classical $L^2 ( \Omega, \RR^3 )$ metric.
For every positive real number $t > 0$, we set $Q_t \assign [0, t]^3$ and $Q
\assign Q_1 = [0, 1]^3$. We recall that a function $u : \RR^3 \rightarrow \RR$
is said to be $Q$-periodic if $u (\cdot) = u (\cdot + e_i)$ for every $e_i$ in
the canonical basis $(e_1, e_2, e_3)$ of $\RR^3$. 

For the energy densities
appearing in the family
$(\mathcal{G}_{\mathcal{L}}^{\varepsilon})_{\varepsilon \in \RR^+}$ we assume
the following hypotheses:

\begin{descriptionaligned}
  \item[$\tmmathbf{[H_1]}$] The {\tmem{exchange parameter}} $a_{\tmop{ex}}$ is
  supposed to be a $Q$-periodic measurable function belonging to $L^{\infty}
  (Q)$ which is bounded from below and above by two positive constants
  $c_{\tmop{ex}} > 0, C_{\tmop{ex}} > 0$, i.e. $0 < c_{\tmop{ex}} \leqslant
  a_{\tmop{ex}} (y) \leqslant C_{\tmop{ex}}$ for $\tau$-a.e. $y \in Q$. In the
  setting of classical Calculus of Variations, this hypothesis guarantees that
  the exchange energy density, which has the form $g (y, \xi) \assign
  a_{\tmop{ex}} (y) | \xi |^2$, $\xi \in \RR^{3 \times 3}$, is a
  {\tmname{Carath{\'e}odory}} integrand satisfying the following quadratic
  growth condition for $\tau$-a.e. $y \in Q$
  \begin{equation}
    \forall \xi \in \RR^{3 \times 3} \hspace{1em} c_{\tmop{ex}} | \xi |^2
    \leqslant g (y, \xi) \leqslant C_{\tmop{ex}} (1 + | \xi |^2) .
  \end{equation}
  Then we set $a_{\varepsilon} (x) \assign a_{\tmop{ex}} (x / \varepsilon)$.
  
  \item[$\tmmathbf{[H_2]}$] The {\tmem{anisotropy density energy}}
  $\varphi_{\tmop{an}} : \RR^3 \times \SSbb^2 \rightarrow \RR^+$ is supposed
  to be a $Q$-periodic measurable function belonging to $L^{\infty} (Q)$ with
  respect to the first variable, and globally lipschitz with respect to the
  second one (uniformly with respect to the first variable), i.e. $\exists
  \kappa_L > 0$ such that
  \begin{equation}
    _{} \underset{y \in Q}{\tmop{ess} \sup}  | \varphi_{\tmop{an}} (y, s_1) -
    \varphi_{\tmop{an}} (y, s_2) | \leqslant \kappa_L | s_1 - s_2 |
    \hspace{1em} \forall s_1, s_2 \in \SSbb^2 .
  \end{equation}
  We then set $\varphi_{\varepsilon} (x, s) \assign \varphi_{\tmop{an}} (x /
  \varepsilon, s)$. The hypotheses assumed on $\varphi_{\tmop{an}}$ are
  sufficiently general to treat the most common classes of crystal anisotropy
  energy densities arising in applications. As a sake of example, for uniaxial
  anisotropy, the energy density reads as
  \begin{equation}
    \varphi_{\tmop{an}} (y, \tmmathbf{m} (x)) = \kappa (y) [1 - (\tmmathbf{u}
    (y) \cdot \tmmathbf{m} (x))^2],
  \end{equation}
  the spatially dependent unit vector $\tmmathbf{u} (\cdot)$ being the easy
  axis of the crystal. For cubic type anisotropy, the energy density reads as:
  \begin{equation}
    \varphi_{\tmop{an}} (y, \tmmathbf{m} (x)) = \kappa (y) \sum_{i = 1}^3
    [(\tmmathbf{u}_i (y) \cdot \tmmathbf{m} (x))^2 - (\tmmathbf{u}_i (y) \cdot
    \tmmathbf{m} (x))^4],
  \end{equation}
  the mutually orthogonal unit vectors $\tmmathbf{u}_i (\cdot)$ being the
  three {\tmem{easy-axes}} of the cubic crystal. Note that the anisotropy depends on the material both in strength 
  ($\kappa (y)$) and in direction ($\tmmathbf{u}_i (y)$).
  \item[$\tmmathbf{[H_3]}$] The {\tmem{saturation magnetization}} $M_s$ is
  supposed to be a $Q$-periodic measurable function belonging to $L^{\infty}
  (Q)$, and we set $M_{\varepsilon} (\cdot) = M_s (\cdot / \varepsilon)$.
\end{descriptionaligned}

The main result of this paper is the following:

\begin{theorem}
  \label{thm:mainresult} Let
  $(\mathcal{G}_{\mathcal{L}}^{\varepsilon})_{\varepsilon \in \RR^+}$ be a
  family of {\tmname{Gibbs-Landau}} free energy functionals satisfying the
  hypotheses $\tmmathbf{[H_1]}, \tmmathbf{[H_2]}$ and $\tmmathbf{[H_3]}$. Then
  $(\mathcal{G}_{\mathcal{L}}^{\varepsilon})_{\varepsilon \in \RR^+}$ is
  {\tmstrong{equicoercive}} in the metric space $\left( H^1 ( \Omega,
  \SSbb^2 ), d_{L^2 ( \Omega, \SSbb^2 )} \right)$. Moreover
  $(\mathcal{G}_{\mathcal{L}}^{\varepsilon})_{\varepsilon \in \RR^+}$
  $\Gamma$-converges in $\left( H^1 ( \Omega, \SSbb^2 ), d_{L^2
  ( \Omega, \SSbb^2 )} \right)$ to the functional
  $\mathcal{G}_{\hom} : H^1 ( \Omega, \SSbb^2 ) \rightarrow \RR^+$
  defined by
  \begin{equation}
    \mathcal{G}_{\hom} (\tmmathbf{m}) \assign \mathcal{E}_{\hom}
    (\tmmathbf{m}) +\mathcal{A}_{\hom} (\tmmathbf{m}) + \frac{{\mu}_0}{2}
    \mathcal{W}_{\hom} (\tmmathbf{m}) +{\mu}_0 \mathcal{Z}_{\hom}
    (\tmmathbf{m}) . \label{eq:homGLEF}
  \end{equation}
  The four terms that appear in (\ref{eq:homGLEF}) have the following
  expressions: The homogenized {\tmstrong{exchange energy}} is given by
  \begin{equation}
    \mathcal{E}_{\hom} (\tmmathbf{m}) \assign \int_{\Omega} A_{\hom} \nabla
    \tmmathbf{m} (x) : \nabla \tmmathbf{m} (x) \,\mathd x,
  \end{equation}
  where $A_{\hom}$ is the {\guillemotleft}classical{\guillemotright}
  homogenized tensor $A_{\hom}$ given by the average
  \begin{equation}
    A_{\hom} \assign \langle a_{\tmop{ex}} (y) (I + \nabla \tmmathbf{\varphi}
    (y))^T (I + \nabla \tmmathbf{\varphi} (y))  \rangle_Q, \;
    \tmmathbf{\varphi} \assign (\varphi_1, \varphi_2, \varphi_3),
  \end{equation}
  where for every $j \in \NN_3$ the component $\varphi_j$ is the unique
  (up to a constant) solution of the following scalar unit cell problem
  \begin{equation}
    \varphi_j \assign \underset{\varphi \in H_{\#}^1
    (Q)}{\tmop{argmin}}_{} \int_Q a_{\tmop{ex}} (y) [ | \nabla \varphi (y) +
    e_j |^2 ] \,\mathd y .
  \end{equation}
  The homogenized {\tmstrong{anisotropy energy}} is given by

  \begin{equation}
    \mathcal{A}_{\hom} (\tmmathbf{m}) \assign \int_{\Omega \times Q}
    \varphi_{\tmop{an}} (y, \tmmathbf{m} (x)) \,\mathd y \, \mathd x,
    \label{eq:homAnthm1}
  \end{equation}
  while the homogenized {\tmstrong{magnetostatic self-energy}} is given by
  \begin{equation}
    \mathcal{W}_{\hom} (\tmmathbf{m}) \assign - \langle M_s \rangle_Q^2
    \int_{\Omega} \tmmathbf{h}_{\mathd} [\tmmathbf{m}] \cdot \tmmathbf{m}
    \,\mathd \tau + \int_{\Omega \times Q} | \nabla_y v_{\tmmathbf{m}} (x, y)
    |^2 \,\mathd x \,\mathd y,
  \end{equation}
  where, for every $x \in \Omega$, the scalar function $v_{\tmmathbf{m}} :
  \Omega \times Q \rightarrow \RR$, is the unique solution of the cell
  problem:
  \begin{equation}
    \tmmathbf{m} (x) \cdot \int_Q M_s (y) \nabla_y \psi (y) \,\mathd y = -
    \int_Q \nabla_y v_{\tmmathbf{m}} (x, y) \cdot \nabla_y \psi (y) \,\mathd y,
    \; \int_Q v_{\tmmathbf{m}} (x, y)\, \mathd y = 0
  \end{equation}
  for all $\psi \in H^1_{\#} (Q)$. 
	
	Finally, the homogenized
  {\tmstrong{interaction energy}} is given by
  \begin{equation}
    \mathcal{Z}_{\hom} (\tmmathbf{m}) = - \langle M_s \rangle_Q \int_{\Omega}
    \tmmathbf{h}_a \cdot \tmmathbf{m} \, \mathd \tau . \label{eq:homIntthm1}
  \end{equation}
\end{theorem}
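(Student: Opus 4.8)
The plan is to split $\mathcal{G}_{\mathcal{L}}^{\varepsilon} = \mathcal{E}_{\varepsilon} + \mathcal{R}_{\varepsilon}$ with $\mathcal{R}_{\varepsilon} \assign \mathcal{A}_{\varepsilon} + \mathcal{W}_{\varepsilon} + \mathcal{Z}_{\varepsilon}$, to show that $\mathcal{R}_{\varepsilon}$ is a \emph{continuously convergent} perturbation — meaning $\mathcal{R}_{\varepsilon}(\tmmathbf{m}_{\varepsilon}) \to \mathcal{R}_{\hom}(\tmmathbf{m})$ whenever $\tmmathbf{m}_{\varepsilon} \to \tmmathbf{m}$ in $L^{2}(\Omega, \SSbb^{2})$, where $\mathcal{R}_{\hom} \assign \mathcal{A}_{\hom} + \tfrac{\mu_{0}}{2}\mathcal{W}_{\hom} + \mu_{0}\mathcal{Z}_{\hom}$ — to prove $\mathcal{E}_{\varepsilon} \xrightarrow{\Gamma} \mathcal{E}_{\hom}$ by classical two-scale homogenization adapted to the sphere constraint, and to conclude via the stability of $\Gamma$-convergence under continuously convergent perturbations. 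Equicoercivity comes first and is the simplest point: since $a_{\varepsilon} \geqslant c_{\tmop{ex}}$, $\varphi_{\varepsilon} \geqslant 0$, $|\tmmathbf{m}| = 1$ a.e.\ (so $\|\tmmathbf{m}\|_{L^{2}(\Omega)}^{2} = |\Omega|$), and $|\mathcal{W}_{\varepsilon}(\tmmathbf{m})| + |\mathcal{Z}_{\varepsilon}(\tmmathbf{m})| \leqslant C_{0}$ with $C_{0}$ depending only on $\mu_{0}$, $\|M_{s}\|_{L^{\infty}}$, $\|\tmmathbf{h}_{a}\|_{L^{2}}$, $|\Omega|$ (for $\mathcal{W}_{\varepsilon}$ recast it, via integration by parts, as a fixed multiple of $\int_{\RR^{3}}|\nabla u_{M_{\varepsilon}\tmmathbf{m}}|^{2}$ and use $\|\nabla u_{M_{\varepsilon}\tmmathbf{m}}\|_{L^{2}(\RR^{3})} \leqslant \|M_{s}\|_{L^{\infty}}\|\tmmathbf{m}\|_{L^{2}}$), one gets $\mathcal{G}_{\mathcal{L}}^{\varepsilon}(\tmmathbf{m}) \geqslant c_{\tmop{ex}}\|\nabla\tmmathbf{m}\|_{L^{2}}^{2} - C_{0}$ uniformly in $\varepsilon$; hence any sequence $(\tmmathbf{m}_{\varepsilon})$ with $\sup_{\varepsilon}\mathcal{G}_{\mathcal{L}}^{\varepsilon}(\tmmathbf{m}_{\varepsilon}) < \infty$ is bounded in $H^{1}(\Omega, \RR^{3})$, hence precompact in $L^{2}$ by Rellich--Kondrachov, and, the constraint being stable under $L^{2}$-convergence, its cluster points lie in $H^{1}(\Omega, \SSbb^{2})$, which is the asserted equicoercivity.

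For the continuous convergence of $\mathcal{R}_{\varepsilon}$, fix $\tmmathbf{m}_{\varepsilon} \to \tmmathbf{m}$ in $L^{2}(\Omega, \SSbb^{2})$. The anisotropy term uses only $\tmmathbf{[H_{2}]}$: $|\mathcal{A}_{\varepsilon}(\tmmathbf{m}_{\varepsilon}) - \int_{\Omega}\varphi_{\varepsilon}(x, \tmmathbf{m}(x))\,\mathd x| \leqslant \kappa_{L}|\Omega|^{1/2}\|\tmmathbf{m}_{\varepsilon} - \tmmathbf{m}\|_{L^{2}} \to 0$, while $\int_{\Omega}\varphi_{\tmop{an}}(x/\varepsilon, \tmmathbf{m}(x))\,\mathd x \to \int_{\Omega\times Q}\varphi_{\tmop{an}}(y, \tmmathbf{m}(x))\,\mathd y\,\mathd x$ by a Riemann--Lebesgue argument (approximating $\tmmathbf{m}$ by simple functions and using $\tmmathbf{[H_{2}]}$). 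The Zeeman term: $M_{\varepsilon} \overset{*}{\rightharpoonup} \langle M_{s}\rangle_{Q}$ in $L^{\infty}$ and $\tmmathbf{h}_{a}\cdot\tmmathbf{m}_{\varepsilon} \to \tmmathbf{h}_{a}\cdot\tmmathbf{m}$ in $L^{1}$, so $\mathcal{Z}_{\varepsilon}(\tmmathbf{m}_{\varepsilon}) \to \mu_{0}\mathcal{Z}_{\hom}(\tmmathbf{m})$. The magnetostatic term is the only one needing a true two-scale argument. Writing $u_{\varepsilon}$ for the magnetostatic potential of $M_{\varepsilon}\tmmathbf{m}_{\varepsilon}$, integration by parts turns $\mathcal{W}_{\varepsilon}(\tmmathbf{m}_{\varepsilon})$ into a fixed multiple of $\int_{\RR^{3}}|\nabla u_{\varepsilon}|^{2}\,\mathd\tau$; the family $(\nabla u_{\varepsilon})$ is bounded in $L^{2}(\RR^{3})$, and because $M_{\varepsilon}\tmmathbf{m}_{\varepsilon}\chi_{\Omega}$ two-scale converges \emph{strongly} to $M_{s}(y)\tmmathbf{m}(x)\chi_{\Omega}(x)$ (strong $L^{2}$-convergence of $\tmmathbf{m}_{\varepsilon}$ against a periodic oscillation), passing to the two-scale limit in the weak form of $\Delta u_{\varepsilon} = -\tmop{div}(M_{\varepsilon}\tmmathbf{m}_{\varepsilon}\chi_{\Omega})$ identifies the two-scale limit of $\nabla u_{\varepsilon}$ as $\nabla_{x}u_{0}(x) + \nabla_{y}u_{1}(x, y)$, where $u_{0} = u_{\langle M_{s}\rangle_{Q}\tmmathbf{m}}$ (since $\int_{Q}\nabla_{y}u_{1}\,\mathd y = 0$) and, for a.e.\ $x$, $u_{1}(x, \cdot)$ solves the cell problem defining $v_{\tmmathbf{m}}(x, \cdot)$ up to a sign. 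Testing the equation against $u_{\varepsilon}$ itself and invoking the strong-times-weak two-scale product rule promotes lower semicontinuity to the identity $\int_{\RR^{3}}|\nabla u_{\varepsilon}|^{2} \to \int_{\RR^{3}}|\nabla_{x}u_{0}|^{2} + \int_{\Omega\times Q}|\nabla_{y}u_{1}|^{2}$ (the cross term vanishes), which recombines precisely into $\tfrac{\mu_{0}}{2}\mathcal{W}_{\hom}(\tmmathbf{m})$; uniqueness of the limit makes the whole sequence converge.

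It remains to show $\mathcal{E}_{\varepsilon} \xrightarrow{\Gamma} \mathcal{E}_{\hom}$ in $(H^{1}(\Omega, \SSbb^{2}), d_{L^{2}})$. For the $\liminf$ inequality, let $\tmmathbf{m}_{\varepsilon} \to \tmmathbf{m}$ in $L^{2}$ with $\liminf\mathcal{E}_{\varepsilon}(\tmmathbf{m}_{\varepsilon}) < \infty$; ellipticity of $a_{\tmop{ex}}$ bounds $(\tmmathbf{m}_{\varepsilon})$ in $H^{1}$, so $\tmmathbf{m}_{\varepsilon} \rightharpoonup \tmmathbf{m}$ in $H^{1}(\Omega, \SSbb^{2})$ and $\nabla\tmmathbf{m}_{\varepsilon}$ two-scale converges to $\nabla\tmmathbf{m}(x) + \nabla_{y}\tmmathbf{m}_{1}(x, y)$ for some $\tmmathbf{m}_{1} \in L^{2}(\Omega, H^{1}_{\#}(Q, \RR^{3}))$; lower semicontinuity of the $a_{\tmop{ex}}$-weighted two-scale $L^{2}$-norm gives $\liminf\mathcal{E}_{\varepsilon}(\tmmathbf{m}_{\varepsilon}) \geqslant \int_{\Omega\times Q}a_{\tmop{ex}}(y)|\nabla\tmmathbf{m}(x) + \nabla_{y}\tmmathbf{m}_{1}(x, y)|^{2} \geqslant \int_{\Omega}A_{\hom}\nabla\tmmathbf{m} : \nabla\tmmathbf{m}$, the last step being the componentwise minimization over the corrector $\tmmathbf{m}_{1}$ that yields exactly the scalar unit-cell problems for $\varphi_{j}$. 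For the $\limsup$ (recovery) inequality, if $\tmmathbf{m}$ is smooth and $\SSbb^{2}$-valued set $\widetilde{\tmmathbf{m}}_{\varepsilon}(x) \assign \tmmathbf{m}(x) + \varepsilon\sum_{j}\varphi_{j}^{\delta}(x/\varepsilon)\,\partial_{j}\tmmathbf{m}(x)$ with $\varphi_{j}^{\delta} \in C^{\infty}_{\#}(Q)$ approximating $\varphi_{j}$ in $H^{1}_{\#}(Q)$; since $\partial_{j}\tmmathbf{m}\cdot\tmmathbf{m} = 0$ one has $|\widetilde{\tmmathbf{m}}_{\varepsilon}|^{2} = 1 + \varepsilon^{2}|\sum_{j}\varphi_{j}^{\delta}(x/\varepsilon)\partial_{j}\tmmathbf{m}|^{2} \geqslant 1$, so $\tmmathbf{m}_{\varepsilon} \assign \widetilde{\tmmathbf{m}}_{\varepsilon}/|\widetilde{\tmmathbf{m}}_{\varepsilon}|$ is everywhere defined, lies in $H^{1}(\Omega, \SSbb^{2})$, converges to $\tmmathbf{m}$ in $L^{2}$, and perturbs $\nabla\widetilde{\tmmathbf{m}}_{\varepsilon}$ only at order $\varepsilon$; a direct computation then yields $\limsup_{\varepsilon}\mathcal{E}_{\varepsilon}(\tmmathbf{m}_{\varepsilon}) \leqslant \int_{\Omega}\langle a_{\tmop{ex}}(I + \nabla\varphi^{\delta})^{T}(I + \nabla\varphi^{\delta})\rangle_{Q}\nabla\tmmathbf{m} : \nabla\tmmathbf{m}$, which tends to $\mathcal{E}_{\hom}(\tmmathbf{m})$ as $\delta \to 0$, so a diagonal extraction produces the recovery sequence; a general $\tmmathbf{m} \in H^{1}(\Omega, \SSbb^{2})$ is reached using the strong $H^{1}$-density of $\SSbb^{2}$-valued maps smooth outside a finite set, the same construction applying away from the singularities while their shrinking neighbourhoods are absorbed through their vanishing exchange energy.

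Finally, since $\mathcal{E}_{\varepsilon} \xrightarrow{\Gamma} \mathcal{E}_{\hom}$ and $\mathcal{R}_{\varepsilon}$ converges continuously to $\mathcal{R}_{\hom}$, stability of $\Gamma$-convergence under continuously convergent perturbations gives $\mathcal{G}_{\mathcal{L}}^{\varepsilon} = \mathcal{E}_{\varepsilon} + \mathcal{R}_{\varepsilon} \xrightarrow{\Gamma} \mathcal{E}_{\hom} + \mathcal{R}_{\hom} = \mathcal{G}_{\hom}$ in $(H^{1}(\Omega, \SSbb^{2}), d_{L^{2}})$, which together with the equicoercivity established above is the statement. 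The step I expect to be the main obstacle is the $\Gamma$-$\limsup$ inequality for $\mathcal{E}_{\varepsilon}$: the classical corrector-based recovery sequence is $\RR^{3}$-valued, and restoring the pointwise constraint $|\tmmathbf{m}_{\varepsilon}| = 1$ without degrading the energy estimate is delicate — the orthogonality $\partial_{j}\tmmathbf{m}\cdot\tmmathbf{m} = 0$ makes the normalization harmless when $\tmmathbf{m}$ is smooth, but for an arbitrary $\tmmathbf{m} \in H^{1}(\Omega, \SSbb^{2})$, where $\SSbb^{2}$-valued smooth maps need not be $H^{1}$-dense, one must appeal to the finer density theorem and localize carefully near the topological singularities. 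A secondary delicate point is the upgrade, within the magnetostatic term, from the two-scale $\liminf$ to genuine convergence of $\int_{\RR^{3}}|\nabla u_{\varepsilon}|^{2}$, i.e.\ the corrector / strong-two-scale argument for $\nabla u_{\varepsilon}$ on the whole space.
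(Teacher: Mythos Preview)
Your overall architecture coincides with the paper's: equicoercivity via the uniform lower bound $a_\varepsilon\geqslant c_{\tmop{ex}}$ and Rellich--Kondrachov; continuous convergence of $\mathcal{A}_\varepsilon$, $\mathcal{W}_\varepsilon$, $\mathcal{Z}_\varepsilon$ (the paper treats them separately but with the same tools --- Riemann--Lebesgue for $\mathcal{A}_\varepsilon,\mathcal{Z}_\varepsilon$, two-scale analysis of the potential for $\mathcal{W}_\varepsilon$, including the whole-space issue you flag, which the paper handles by proving weighted $L^2_\omega$ and $BL^1$ two-scale compactness lemmas); and then the stability of $\Gamma$-limits under continuously convergent perturbations to conclude.

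The one genuine methodological difference is the exchange term. The paper does \emph{not} build a recovery sequence directly. Instead it invokes the tangential homogenization theorem of Babadjian--Millot as a black box, which gives $\Gamma$-convergence of $\mathcal{E}_\varepsilon$ on $H^1(\Omega,\SSbb^2)$ to $\int_\Omega Tg_{\hom}(\tmmathbf{m},\nabla\tmmathbf{m})$ with a \emph{tangentially} homogenized density $Tg_{\hom}(s,\xi)$ defined by a cell problem over $T_s(\SSbb^2)$-valued correctors; the paper's own contribution is then an algebraic argument (project the unconstrained minimizer onto $T_s(\SSbb^2)$ and use $\xi^T s=0$) showing that for the integrand $a_{\tmop{ex}}(y)|\xi|^2$ one has $Tg_{\hom}(s,\xi)=g_{\hom}(\xi)$, i.e.\ the constraint disappears at the level of the homogenized density. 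Your route instead attacks the $\liminf$/$\limsup$ directly with two-scale tools. The $\liminf$ side is unproblematic and matches the paper's heuristic discussion. For the $\limsup$ you construct $\widetilde{\tmmathbf m}_\varepsilon=\tmmathbf m+\varepsilon\sum_j\varphi_j^\delta(\cdot/\varepsilon)\partial_j\tmmathbf m$ and normalize, exploiting $\partial_j\tmmathbf m\cdot\tmmathbf m=0$; the paper sketches essentially this idea (for a general convex manifold, via nearest-point projection) but explicitly labels it ``not completely rigorous'' and defers to Babadjian--Millot precisely because of the density issue you identify: smooth $\SSbb^2$-valued maps are not dense in $H^1(\Omega,\SSbb^2)$ in dimension three, so one must pass through Bethuel-type maps smooth off a finite set and localize. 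Your cutoff-near-singularities plan is the right one and can be made to work, but carrying out the triple diagonalization (in $\varepsilon$, in the excision radius $r$, in the corrector regularization $\delta$, and then in the $H^1$-approximation of $\tmmathbf m$) is exactly the labor that the cited theorem absorbs. In short: your approach is correct and more self-contained, at the price of reproving a nontrivial piece of Babadjian--Millot; the paper's approach is shorter but less transparent about where the sphere constraint actually bites.
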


The paper is organized as follows: In Section \ref{sec:prelim} we give a brief
survey of the main mathematical concepts and results used throughout the
paper. The equicoercivity of the family
$(\mathcal{G}_{\mathcal{L}}^{\varepsilon})_{\varepsilon \in \RR^+}$ is
established in Section \ref{subsec:proof1}; the $\Gamma$-limit of the exchange
energy family of functionals $(\mathcal{E}_{\varepsilon})_{\varepsilon \in
\RR^+}$ is computed in Section \ref{subsec:proof2}; in Section
\ref{subsec:proof3} it is shown that the family of magnetostatic self-energies
$(\mathcal{W}_{\varepsilon})_{\varepsilon \in \RR^+}$ continuously converges
to $\mathcal{W}_{\hom}$, while in Section \ref{subsec:proof4} it is
established the continuous convergence of the family of anisotropy energies
$(\mathcal{A}_{\varepsilon})_{\varepsilon \in \RR^+}$ to $\mathcal{A}_{\hom}$
and the continuous convergence of the family of interaction energies
$(\mathcal{Z}_{\varepsilon})_{\varepsilon \in \RR^+}$ to the functional
$\mathcal{Z}_{\hom}$. Eventually, the proof of {\tmname{mht}} (Theorem
\ref{thm:mainresult}) is completed in Section \ref{sec:proof}, and some well-known 
results in homogenization theory, though somewhat difficult to find in the literature 
are given in the appendix.

\section{Mathematical Preliminaries}\label{sec:prelim}

The purpose of this section is to fix some notations and to give a survey of
the concepts and results that are used throughout this work. All results are
stated without proof as they can be readily found in the references given
below.

\subsection{$\Gamma$-convergence of a family of functionals}

We start by recalling {\tmname{De Giorgi}}'s notion of $\Gamma$-convergence
and some of its basic properties (see {\cite{DeGiorgi1975,DalMaso1993}}).
Throughout this part we indicate with $(X, d)$ a metric space and, for every
$m \in X$, with $\mathfrak{C}_d (m)$ the subset of all sequences of elements
of $X$ which converge to $m$.

\begin{definition}
  {\tmem{\tmtextup{(}$\Gamma$-convergence of a family of
  functionals\tmtextup{)}}} Let $(\mathcal{F}_n)_{n \in \NN}$ be a sequence of
  functionals defined on $X$ with values on $\overline{\RR}$. The functional
  $\mathcal{F}: X \rightarrow \overline{\RR}$ is said to be the
  $\Gamma$-$\lim$ of $(\mathcal{F}_n)_{n \in \NN}$ with respect to the metric
  $d$, if for every $m \in X$ we have:
  \begin{equation}
    \forall (m_n) \in \mathfrak{C}_d (m) \hspace{1em} \mathcal{F} (m)
    \leqslant \liminf_{n \rightarrow \infty} \mathcal{F}_n (m_n)
  \end{equation}
  and
  \begin{equation}
    \exists (\bar{m}_n) \in \mathfrak{C}_d (m) \hspace{1em} \mathcal{F} (m) =
    \lim_{n \rightarrow \infty} \mathcal{F}_n (\bar{m}_n) .
    \label{eq:recovery}
  \end{equation}
  In this case we write $\mathcal{F}= \Gamma \text{-} \lim_{n \rightarrow \infty} \mathcal{F}_n$.
	
	If $(\mathcal{F}_{\varepsilon})_{\varepsilon \in
  \RR^+}$ is a family of functionals, we say that $\mathcal{F}: X \rightarrow \overline{\RR}$
  is the $\Gamma$-$\lim$ of
  $(\mathcal{F}_{\varepsilon})_{\varepsilon \in \RR^+}$ as $\varepsilon \rightarrow 0$, if for every $\varepsilon_n
  \downarrow_{} 0$ one has $\mathcal{F}=\Gamma \text{-} \lim_{n \rightarrow \infty} \mathcal{F}_{\varepsilon_n}$. In this case we write $\mathcal{F}= \Gamma \text{-} \lim_{\varepsilon
  \rightarrow 0} \mathcal{F}_{\varepsilon}$.
	
	The condition (\ref{eq:recovery}) is sometimes referred to in literature as
  the existence of a recovery sequence.
\end{definition}


One of the most important properties of $\Gamma$-convergence, and the reason
why this kind of variational convergence is so important in the asymptotic
analysis of variational problems, is that under appropriate compactness
hypotheses it implies the convergence of (almost) minimizers of a family of
equicoercive functionals to the minimum of the $\Gamma$-limit functional. More
precisely, the following result holds:

\begin{theorem}
  {\tmem{\tmtextup{(}Fundamental Theorem of $\Gamma$-convergence\tmtextup{)}}}
  If $(\mathcal{F}_{\varepsilon})_{\varepsilon \in \RR^+}$ is a family of
  equicoercive functionals $\Gamma$-converging on $X$ to the functional
  $\mathcal{F}$. Then $\mathcal{F}$ is coercive and lower semicontinuous
  (therefore there exists a minimizer for $\mathcal{F}$ on $X$) and we have
  the convergence of minima values
  \begin{equation}
    \min_{m \in X} \mathcal{F} (m) = \lim_{\varepsilon \rightarrow 0} \inf_{m
    \in X} \mathcal{F}_{\varepsilon} (m) .
  \end{equation}
  Moreover, given $\varepsilon_n \downarrow_{} 0$ and $(m_n)_{n \in \NN}$ a
  converging sequence such that
  \begin{equation}
    \lim_{n \rightarrow \infty} \mathcal{F}_{\varepsilon_n} (m_n) = \lim_{n
    \rightarrow \infty}  (\inf_{m \in X} \mathcal{F}_{\varepsilon_n} (m)),
    \label{eq:almostminimizerseq}
  \end{equation}
  its limit is a minimizer for $\mathcal{F}$ on $X$. If
  \tmtextup{(\ref{eq:almostminimizerseq})} holds, the sequence $(m_n)_{n \in
  \NN}$ is said to be a sequence of almost-minimizers for $\mathcal{F}$.
\end{theorem}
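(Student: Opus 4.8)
The plan is to deduce, one at a time, the four assertions from the two hypotheses, using only the definition of $\Gamma$-convergence and the equicoercivity assumption, which I read in its standard form: for every $t\in\RR$ there is a compact set $K_t\subseteq X$ with $\{m\in X:\mathcal{F}_\varepsilon(m)\leqslant t\}\subseteq K_t$ for all $\varepsilon$. Recall that $\mathcal{F}=\Gamma\text{-}\lim_{\varepsilon\to 0}\mathcal{F}_\varepsilon$ means that along every vanishing sequence $\varepsilon_n\downarrow 0$ the $\liminf$-inequality holds for all converging sequences, and each point admits a recovery sequence.

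First I would show that $\mathcal{F}$ is lower semicontinuous. Fix $m\in X$ and $m_k\to m$; after passing to a subsequence, assume $\mathcal{F}(m_k)\to\ell:=\liminf_k\mathcal{F}(m_k)$, and fix any $\varepsilon_n\downarrow 0$. For each $k$ a recovery sequence for $m_k$ gives points $u_n^{(k)}\to m_k$ with $\mathcal{F}_{\varepsilon_n}(u_n^{(k)})\to\mathcal{F}(m_k)$ as $n\to\infty$; a diagonal choice of a strictly increasing $n(k)\uparrow\infty$ produces $w_k:=u_{n(k)}^{(k)}$ with $d(w_k,m_k)<1/k$ and $|\mathcal{F}_{\varepsilon_{n(k)}}(w_k)-\mathcal{F}(m_k)|<1/k$. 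Then $w_k\to m$ along the vanishing subsequence $(\varepsilon_{n(k)})_k$, so the $\liminf$-inequality gives $\mathcal{F}(m)\leqslant\liminf_k\mathcal{F}_{\varepsilon_{n(k)}}(w_k)\leqslant\ell$, which is the claimed lower semicontinuity.

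Next, for coercivity: if $\mathcal{F}(m)\leqslant t$, pick $\varepsilon_n\downarrow 0$ and a recovery sequence $m_n\to m$; then $\mathcal{F}_{\varepsilon_n}(m_n)\leqslant t+1$ for large $n$, so $m_n\in K_{t+1}$, and since $K_{t+1}$ is closed, $m\in K_{t+1}$. Hence $\{\mathcal{F}\leqslant t\}\subseteq K_{t+1}$ is precompact, so $\mathcal{F}$ is coercive; together with lower semicontinuity this produces a minimizer (restrict to a nonempty compact sublevel set, on which a lower semicontinuous function attains its minimum), the case $\mathcal{F}\equiv+\infty$ being trivial. For the convergence of the infima I would prove two matching inequalities along an arbitrary $\varepsilon_n\downarrow 0$. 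The bound $\limsup_n\inf_X\mathcal{F}_{\varepsilon_n}\leqslant\mathcal{F}(m)$ holds for every $m$ by testing $\inf_X\mathcal{F}_{\varepsilon_n}$ against a recovery sequence for $m$, whence $\limsup_n\inf_X\mathcal{F}_{\varepsilon_n}\leqslant\min_X\mathcal{F}$ after taking the infimum over $m$. Conversely, passing to a subsequence realizing $L:=\liminf_n\inf_X\mathcal{F}_{\varepsilon_n}$ and assuming $L<\infty$, choose near-minimizers $m_n$ with $\mathcal{F}_{\varepsilon_n}(m_n)\leqslant\inf_X\mathcal{F}_{\varepsilon_n}+1/n\leqslant L+1$ eventually; then $m_n\in K_{L+1}$, so up to a further subsequence $m_n\to m$, and the $\liminf$-inequality yields $\min_X\mathcal{F}\leqslant\mathcal{F}(m)\leqslant\liminf_n\mathcal{F}_{\varepsilon_n}(m_n)\leqslant L$.

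Finally, the statement about almost-minimizers is immediate: if $m_n\to m$ and $\mathcal{F}_{\varepsilon_n}(m_n)\to\lim_n\inf_X\mathcal{F}_{\varepsilon_n}=\min_X\mathcal{F}$, the $\liminf$-inequality gives $\mathcal{F}(m)\leqslant\min_X\mathcal{F}$, so $m$ is a minimizer. I expect the only genuinely delicate point to be the bookkeeping of the successive subsequence extractions — the diagonal argument for lower semicontinuity and the nested extractions for the $\liminf$-bound on the infima — where one must check that each extracted sequence of parameters still decreases to zero, so that the definition of the $\Gamma$-limit remains applicable; everything else is routine.
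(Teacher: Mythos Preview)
Your argument is correct and follows the standard route to the Fundamental Theorem of $\Gamma$-convergence: lower semicontinuity via a diagonal recovery argument, coercivity by pushing sublevel sets into the common compact $K_{t+1}$ through a recovery sequence, the two-sided bound on infima from recovery versus equicoercivity plus $\liminf$, and the almost-minimizer clause as an immediate consequence of the $\liminf$-inequality. The bookkeeping you flag (strict monotonicity of $n(k)$ so that $\varepsilon_{n(k)}\downarrow 0$, and the nested extractions in the lower bound) is handled adequately.

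Note, however, that the paper does not supply its own proof of this theorem: it is stated in the preliminaries with the explicit disclaimer that ``all results are stated without proof as they can be readily found in the references given below,'' pointing to \textsc{Dal Maso}'s monograph. So there is no in-paper argument to compare against; your proof is precisely the classical one that the cited reference contains.
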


Let us recall now that given two families of functional
$(\mathcal{F}_{\varepsilon})_{\varepsilon \in \RR^+}$ and
$(\mathcal{G}_{\varepsilon})_{\varepsilon \in \RR^+}$ $\Gamma$-converging
respectively to $\mathcal{F}$ and $\mathcal{G}$, it is in general not the case (see {\cite{DalMaso1993}})
that $\Gamma$-$\lim_{\varepsilon \rightarrow 0} (\mathcal{F}_{\varepsilon}
+\mathcal{G}_{\varepsilon}) =\mathcal{F}+\mathcal{G}$ . A sufficient condition
for that property to hold is  that at least one of the two families of
functionals satisfies a stronger type of convergence:

\begin{definition}
  We say that a family of functionals
  $(\mathcal{G}_{\varepsilon})_{\varepsilon \in \RR^+}$ is {\tmem{continuously
  convergent}} in $X$ to a functional $\mathcal{G}: X \rightarrow
  \overline{\RR}$, and we will write $\mathcal{G}_{\varepsilon^{}}
  \xrightarrow{\Gamma_{\tmop{cont}}} \mathcal{G}$, if for every $m_0 \in X$
  \[ \lim_{(m, \varepsilon) \rightarrow (m_0, 0)} \mathcal{G}_{\varepsilon}
     (m) =\mathcal{G} (m_0) . \]
\end{definition}

We then have (see {\cite{DalMaso1993}} for a proof):

\begin{proposition}
  \label{prop:sumGlimit}Let $\mathcal{F}= \Gamma \text{-} \lim_{\varepsilon
  \rightarrow 0} \mathcal{F}_{\varepsilon}$. Suppose that the family of
  functionals $(\mathcal{G}_{\varepsilon})_{\varepsilon \in \RR^+}$
  continuously converges to $\mathcal{G}$, and that
  $\mathcal{G}_{\varepsilon}$ and $\mathcal{G}$ are everywhere finite on $X$.
  Then $\mathcal{G}= \Gamma \text{-} \lim_{\varepsilon \rightarrow 0}
  \mathcal{G}_{\varepsilon}$ and
  \[ \Gamma \text{-} \lim_{\varepsilon \rightarrow 0}
     (\mathcal{F}_{\varepsilon} +\mathcal{G}_{\varepsilon})
     =\mathcal{F}+\mathcal{G}. \]
  In particular if $\mathcal{Z}: X \rightarrow \RR$ is a continuous functional
  then $\Gamma \text{-} \lim_{\varepsilon \rightarrow 0}
  (\mathcal{F}_{\varepsilon} +\mathcal{Z}_{}) =\mathcal{F}+\mathcal{Z}$ and
  $\mathcal{Z}$ is called a continuous perturbation of the $\Gamma$-limit.
\end{proposition}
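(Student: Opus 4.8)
The plan is to phrase everything through the sequential characterisation of the $\Gamma$-limit of a family, and to exploit that continuous convergence delivers a \emph{genuine} limit along every converging sequence, not merely a one-sided bound. So I would fix an arbitrary sequence $\varepsilon_n \downarrow 0$ and prove both conclusions for it; since $\varepsilon_n$ is arbitrary, the family statements follow at once.

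I would first check that $\mathcal{G} = \Gamma\text{-}\lim_n \mathcal{G}_{\varepsilon_n}$. For the $\liminf$ inequality, fix $m_0 \in X$ and $(m_n) \in \mathfrak{C}_d(m_0)$: continuous convergence gives $\mathcal{G}_{\varepsilon_n}(m_n) \to \mathcal{G}(m_0)$, hence $\liminf_n \mathcal{G}_{\varepsilon_n}(m_n) = \mathcal{G}(m_0)$, which is (more than) what is required. For the recovery sequence I would simply take the constant sequence $\bar m_n \equiv m_0 \in \mathfrak{C}_d(m_0)$; freezing the first argument at $m_0$ in the definition of continuous convergence gives $\mathcal{G}_{\varepsilon_n}(m_0) \to \mathcal{G}(m_0)$, i.e. $\mathcal{G}(m_0) = \lim_n \mathcal{G}_{\varepsilon_n}(\bar m_n)$.

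Next, for the sum, fix $m_0 \in X$. For the $\liminf$ inequality I would take any $(m_n) \in \mathfrak{C}_d(m_0)$ and use that $\mathcal{G}_{\varepsilon_n}(m_n) \in \RR$ converges to the finite value $\mathcal{G}(m_0)$, so that
\[ \liminf_n \bigl( \mathcal{F}_{\varepsilon_n}(m_n) + \mathcal{G}_{\varepsilon_n}(m_n) \bigr) = \liminf_n \mathcal{F}_{\varepsilon_n}(m_n) + \mathcal{G}(m_0) \geqslant \mathcal{F}(m_0) + \mathcal{G}(m_0), \]
the last inequality being the $\Gamma$-$\liminf$ bound for $\mathcal{F}$ along $(m_n)$, and the sum $\mathcal{F}(m_0) + \mathcal{G}(m_0)$ being unambiguous in $\overline{\RR}$ because $\mathcal{G}(m_0)$ is finite. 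For the recovery sequence I would pick $(\bar m_n) \in \mathfrak{C}_d(m_0)$ with $\mathcal{F}_{\varepsilon_n}(\bar m_n) \to \mathcal{F}(m_0)$, which exists since $\mathcal{F} = \Gamma\text{-}\lim_n \mathcal{F}_{\varepsilon_n}$; continuous convergence then forces $\mathcal{G}_{\varepsilon_n}(\bar m_n) \to \mathcal{G}(m_0)$, so $\mathcal{F}_{\varepsilon_n}(\bar m_n) + \mathcal{G}_{\varepsilon_n}(\bar m_n) \to \mathcal{F}(m_0) + \mathcal{G}(m_0)$ and the very same $(\bar m_n)$ is a recovery sequence for $\mathcal{F} + \mathcal{G}$. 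This yields $\Gamma\text{-}\lim_{\varepsilon \to 0}(\mathcal{F}_\varepsilon + \mathcal{G}_\varepsilon) = \mathcal{F} + \mathcal{G}$. The concluding assertion is the special case $\mathcal{G}_\varepsilon \equiv \mathcal{Z}$: a continuous $\mathcal{Z} : X \to \RR$ is everywhere finite and is trivially continuously convergent to itself, since $\lim_{(m,\varepsilon) \to (m_0,0)} \mathcal{Z}(m) = \lim_{m \to m_0} \mathcal{Z}(m) = \mathcal{Z}(m_0)$ does not involve $\varepsilon$.

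The only step that is not purely formal — and the one that explains why the finiteness of $\mathcal{G}_\varepsilon$ and $\mathcal{G}$ is indispensable — is the additivity of $\liminf$ used above, namely $\liminf_n(a_n + b_n) = \liminf_n a_n + \lim_n b_n$ whenever $(b_n)$ converges to a finite limit. Finiteness of $\mathcal{G}(m_0)$ removes any $\infty - \infty$ indeterminacy even when $\mathcal{F}_{\varepsilon_n}(m_n) \to +\infty$, so no uniform bound on the $\mathcal{F}_\varepsilon$ is needed; without it, both this identity and the conclusion fail in general (cf. \cite{DalMaso1993}).
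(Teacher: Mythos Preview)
Your argument is correct. The paper itself does not prove this proposition: it is stated as a known result with the reference ``see \cite{DalMaso1993} for a proof'', so there is no in-paper proof to compare against. Your sequential verification of the $\liminf$ and recovery-sequence conditions is the standard route, and your explicit remark on why finiteness of $\mathcal{G}$ is needed (to make $\liminf_n(a_n+b_n)=\liminf_n a_n+\lim_n b_n$ legitimate and to avoid $\infty-\infty$) is exactly the point that makes the hypothesis non-cosmetic.
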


\subsection{$\tmop{Two}$-scale convergence}

The aim of this section is to present in a schematic way the main properties
of two-scale convergence, a notion that is first due to {\tmname{Nguetseng}} {\cite{Nguetseng1989}},
developed as a methodology by {\tmname{Allaire}} {\cite{Allaire1992}} and further investigated by many
others (see {\cite{Allaire1996}} and references therein for instance).

We denote by $C^{\infty}_{\#} (Q)$ the set of infinitely differentiable real
functions over $\RR^3$ that are $Q$-periodic and define $H^1_{\#} (Q)$ as the
closure of $C^{\infty}_{\#} (Q)$ in $H^1_{\text{loc}} (\Omega)$. Obviously any element of $H^1_{\#} (Q)$ has the same trace on the opposite faces of $Q$.

A generalized version of the {\tmname{Riemann-Lebesgue}} Lemma holds for the weak limit of
rapidly oscillating functions. For a proof we refer the reader to
{\cite{Donato1999}}.

\begin{proposition}
  \label{lemma:genRiemLeb}Let $\Omega \subset \RR^3$ be any open set. Let $1
  \leqslant p < \infty$ and $t > 0$ be a positive real number. Let $u \in L^p
  (Q_t)$ be a $Q_t$-periodic function. Set $u_{\varepsilon} (x) \assign u (x /
  \varepsilon)$ $\tau$-a.e$.$ on $\Omega$. Then, if $p < \infty$, as
  $\varepsilon \rightarrow 0$
  \[ u_{\varepsilon} \rightharpoonup \langle u \rangle_{Q_t} \assign
     \frac{1}{| Q_t |} \int_{Q_t} u \, \mathd \tau \hspace{1em} \text{weakly in }
     L^p (\Omega) . \]
  If $p = \infty$, one has
  \[ u_{\varepsilon} \rightharpoonup \langle u \rangle_{Q_t} \assign
     \frac{1}{| Q_t |} \int_{Q_t} u \, \mathd \tau \hspace{1em}
     \text{weakly$^{\ast}$ in } L^{\infty} (\Omega) . \]
\end{proposition}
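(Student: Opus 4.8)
The plan is to reduce the statement to one explicit computation on a single periodicity cell, and then to propagate it to arbitrary test functions by density, using a uniform bound throughout to absorb the error terms. Two reductions come first. By the dilation $y \mapsto ty$ one may replace $u$ by the $Q$-periodic function $y \mapsto u(ty) \in L^p(Q)$ and $\varepsilon$ by $t\varepsilon$, so it suffices to treat $t = 1$. Next I would record the uniform estimate $\|u_\varepsilon\|_{L^p(\Omega')} \le C(\Omega')\,\|u\|_{L^p(Q)}$, valid for every bounded open $\Omega' \Subset \Omega$ and every small $\varepsilon$: cover $\overline{\Omega'}$ by the finitely many cells $\varepsilon(k + Q)$, $k \in \mathbb{Z}^3$, that meet it, and on each such cell use periodicity and the change of variables $y = x/\varepsilon$ to get $\int_{\varepsilon(k+Q)} |u_\varepsilon|^p \,\mathd\tau = \varepsilon^3 \int_Q |u|^p\,\mathd\tau$. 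Applying the same covering bound to the truncations $u^{(M)} := (u \wedge M) \vee (-M)$ also shows that the family $\{u_\varepsilon\}_\varepsilon$ is equi-integrable on bounded subsets of $\Omega$ --- a fact I will invoke only at the endpoint $p = 1$.

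The model computation is then the following. For a coordinate box $B = \prod_{i=1}^{3}(a_i, b_i) \Subset \Omega$, let $B_\varepsilon$ be the union of those cells $\varepsilon(k + Q)$ entirely contained in $B$. Since $B \setminus B_\varepsilon$ lies in an $\varepsilon\sqrt{3}$-neighbourhood of $\partial B$, we have $|B \setminus B_\varepsilon| = O(\varepsilon)$, while on each full cell periodicity gives $\int_{\varepsilon(k+Q)} u_\varepsilon\,\mathd\tau = \varepsilon^3 \int_Q u\,\mathd\tau = |\varepsilon(k+Q)|\,\langle u\rangle_Q$ (recall $|Q| = 1$). Summing over the cells,
\[
  \int_B u_\varepsilon \,\mathd\tau = |B_\varepsilon|\,\langle u\rangle_Q + \int_{B \setminus B_\varepsilon} u_\varepsilon \,\mathd\tau ,
\]
and the last integral tends to $0$: for $1 < p \le \infty$ by Hölder's inequality, $\bigl|\int_{B\setminus B_\varepsilon} u_\varepsilon\,\mathd\tau\bigr| \le |B\setminus B_\varepsilon|^{1/p'}\,\|u_\varepsilon\|_{L^p(\Omega')} \to 0$, and for $p = 1$ by the equi-integrability recorded above. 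Since $|B_\varepsilon| \to |B|$, this yields $\int_B u_\varepsilon\,\mathd\tau \to |B|\,\langle u\rangle_Q$.

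It then remains to pass to general test functions. For $1 < p < \infty$, every $\phi \in L^{p'}(\Omega)$ can be approximated in $L^{p'}(\Omega)$ by finite linear combinations $\phi_n$ of characteristic functions of boxes $B \Subset \Omega$; combining the model computation with the uniform bound of the first step to estimate $\int_\Omega u_\varepsilon(\phi - \phi_n)\,\mathd\tau$ gives $\int_\Omega u_\varepsilon \phi\,\mathd\tau \to \langle u\rangle_Q \int_\Omega \phi\,\mathd\tau$, that is $u_\varepsilon \rightharpoonup \langle u\rangle_Q$ weakly in $L^p(\Omega)$. The case $p = \infty$ is identical, testing against $\phi \in L^1(\Omega)$ and using $\|u_\varepsilon\|_{L^\infty} = \|u\|_{L^\infty(Q)}$; the endpoint $p = 1$ I would reduce to $p = \infty$ by truncation, since writing $u = u^{(M)} + r^{(M)}$ the bounded part $u^{(M)}_\varepsilon$ converges weakly-$\ast$ --- hence, on bounded $\Omega$, weakly in $L^1$ --- to $\langle u^{(M)}\rangle_Q$, with $\langle u^{(M)}\rangle_Q \to \langle u\rangle_Q$, while $\|r^{(M)}_\varepsilon\|_{L^1(\Omega')} \le C(\Omega')\|r^{(M)}\|_{L^1(Q)} \to 0$ uniformly in $\varepsilon$. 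The only point I expect to require genuine care is this $L^1$ endpoint, where the boundary layer cannot simply be discarded via Hölder and one must instead use equi-integrability (equivalently, the truncation device); and if $\Omega$ is unbounded the conclusion should be read against compactly supported test functions, which is precisely what the argument above delivers.
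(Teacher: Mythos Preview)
The paper does not supply its own proof of this proposition: it is stated as a known generalized \textsc{Riemann--Lebesgue} lemma with the reader referred to \cite{Donato1999}. Your argument is correct and is essentially the standard textbook proof found there --- uniform $L^p$ bound via cell-covering, explicit computation on boxes, then density --- so there is nothing to compare; your caveat about unbounded $\Omega$ (where $u_\varepsilon \notin L^p(\Omega)$ in general and the conclusion must be read locally) is also well taken.
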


\begin{definition}
  Let $\Omega$ be an open set $\Omega \subset \RR^3$, and let
  $(\varepsilon_k)_{k \in \NN}$ be a fixed sequence of positive real numbers
  (when it is clear from the context we will omit the subscript $k$)
  converging to $0$. The sequence of functions $(u_{\varepsilon}) \in L^2
  (\Omega)$ is said to two-scale converge to a limit $u \in L^2 (\Omega \times
  Q)$, if for any function $\varphi \in \mathcal{D} [\Omega ; C_{\#}^{\infty}
  (Q)]$ we have
  \begin{equation}
    \lim_{\varepsilon \rightarrow 0} \int_{\Omega} u_{\varepsilon} (x) \varphi
    (x, x / \varepsilon_{}) \,\mathd x = \int_{\Omega \times Q} u (x, y) \varphi
    (x, y) \,\mathd y \,\mathd x
  \end{equation}

  In this case we write $u_{\varepsilon} \twoheadrightarrow u$. We say that
  $(u_{\varepsilon})_{}$ in $L^2 (\Omega)$ {\tmstrong{strongly}} two-scale
  converges to a limit $u \in L^2 (\Omega \times Q)$ if $u_{\varepsilon}
  \twoheadrightarrow u$ and moreover
  \[ \| u \|_{\Omega \times Q} = \lim_{\varepsilon \rightarrow 0} \|
     u_{\varepsilon} \|_{\Omega} . \]
\end{definition}

The importance of this new notion of convergence relies on the following
compactness results.

\begin{proposition}
  \label{prop:compactnessl2ts}For each bounded sequence $(u_{\varepsilon}) \in
  L^2 (\Omega)$, there exists an $u \in L^2 (\Omega \times Q)$ such that, up
  to a subsequence, $u_{\varepsilon} \twoheadrightarrow u$.
\end{proposition}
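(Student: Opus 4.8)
\textit{Proof proposal.} The plan is to realize the two-scale limit as the Riesz representative of the limit of a suitable family of linear functionals, following the classical scheme of Nguetseng and Allaire. Write $C := \sup_{\varepsilon}\|u_{\varepsilon}\|_{\Omega} < \infty$, and for each admissible test function $\varphi \in \mathcal{D}[\Omega;C^{\infty}_{\#}(Q)]$ set
\[
\mu_{\varepsilon}(\varphi) := \int_{\Omega} u_{\varepsilon}(x)\,\varphi(x, x/\varepsilon)\,\mathd x .
\]
By Cauchy--Schwarz, $|\mu_{\varepsilon}(\varphi)| \leqslant C\,\|\varphi(\cdot,\cdot/\varepsilon)\|_{\Omega}$, and since every such $\varphi$ has compact support in $x$, all the integrals below live on a fixed bounded subset of $\Omega$.

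The first and technically decisive step is the norm identity
\[
\lim_{\varepsilon\to 0}\int_{\Omega}|\varphi(x,x/\varepsilon)|^{2}\,\mathd x = \int_{\Omega\times Q}|\varphi(x,y)|^{2}\,\mathd y\,\mathd x ,
\]
i.e. $\|\varphi(\cdot,\cdot/\varepsilon)\|_{\Omega}\to\|\varphi\|_{\Omega\times Q}$ for admissible $\varphi$. For a tensor product $\varphi(x,y)=\theta(x)\psi(y)$ with $\theta\in\mathcal{D}(\Omega)$ and $\psi\in C^{\infty}_{\#}(Q)$ this is immediate from Proposition~\ref{lemma:genRiemLeb} applied to the $Q$-periodic function $|\psi|^{2}$, hence it also holds for finite linear combinations of tensor products. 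A general admissible $\varphi$ is then handled by density: being continuous, compactly supported in $x$, and $Q$-periodic and smooth in $y$, it is uniformly approximable on its support by finite sums $\sum_{i}\theta_{i}(x)\psi_{i}(y)$, and the resulting error terms are controlled uniformly in $\varepsilon$ because the relevant domain has finite measure. This uniform-approximation step is where the particular structure of admissible test functions is genuinely used, and it is the step I expect to be the main obstacle.

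Granting the norm identity, one gets $\limsup_{\varepsilon\to0}|\mu_{\varepsilon}(\varphi)|\leqslant C\,\|\varphi\|_{\Omega\times Q}$ for every admissible $\varphi$. Since $L^{2}(\Omega\times Q)$ is separable and $\mathcal{D}[\Omega;C^{\infty}_{\#}(Q)]$ is dense in it, I fix a countable subset $\mathcal{S}\subset\mathcal{D}[\Omega;C^{\infty}_{\#}(Q)]$ that is $L^{2}(\Omega\times Q)$-dense. A standard diagonal extraction yields a subsequence (not relabelled) along which $\mu_{\varepsilon}(\varphi)$ converges for every $\varphi\in\mathcal{S}$; denote the limit by $L(\varphi)$. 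Then $L$ is linear on $\mathcal{S}$ with $|L(\varphi)|\leqslant C\,\|\varphi\|_{\Omega\times Q}$, so it extends uniquely to a bounded linear functional on $L^{2}(\Omega\times Q)$, and by the Riesz representation theorem there is $u\in L^{2}(\Omega\times Q)$ with $L(\varphi)=\int_{\Omega\times Q}u(x,y)\varphi(x,y)\,\mathd y\,\mathd x$.

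It then remains to upgrade the convergence from $\mathcal{S}$ to all admissible test functions, so as to match the definition of $u_{\varepsilon}\twoheadrightarrow u$. Given $\varphi\in\mathcal{D}[\Omega;C^{\infty}_{\#}(Q)]$ and $\eta>0$, pick $\psi\in\mathcal{S}$ with $\|\varphi-\psi\|_{\Omega\times Q}<\eta$ and split
\[
\mu_{\varepsilon}(\varphi)-\int_{\Omega\times Q}u\varphi = \mu_{\varepsilon}(\varphi-\psi) + \Bigl(\mu_{\varepsilon}(\psi)-\int_{\Omega\times Q}u\psi\Bigr) + \int_{\Omega\times Q}u(\psi-\varphi) .
\]
Along the subsequence the middle term vanishes, the last term is bounded by $\|u\|_{\Omega\times Q}\,\eta$, and for the first term $\varphi-\psi$ is again admissible, so $\limsup_{\varepsilon\to0}|\mu_{\varepsilon}(\varphi-\psi)|\leqslant C\,\|\varphi-\psi\|_{\Omega\times Q}\leqslant C\eta$ by the norm identity. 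Letting $\eta\to0$ gives $\mu_{\varepsilon}(\varphi)\to\int_{\Omega\times Q}u\varphi$ for every admissible $\varphi$, that is $u_{\varepsilon}\twoheadrightarrow u$ along the chosen subsequence, which is the assertion.
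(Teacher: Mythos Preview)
The paper does not supply its own proof of this proposition: it appears in Section~\ref{sec:prelim} among preliminary results that are explicitly ``stated without proof as they can be readily found in the references given below,'' with Nguetseng~\cite{Nguetseng1989} and Allaire~\cite{Allaire1992} cited. Your argument is precisely the classical proof from those references---the norm identity for admissible test functions (via the generalized Riemann--Lebesgue lemma on periodic functions), diagonal extraction over a countable $L^{2}(\Omega\times Q)$-dense family, Riesz representation, then a density upgrade to all admissible $\varphi$---and it is correct. There is nothing in the paper itself to compare against; your proof is what the cited literature does.
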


Moreover, for bounded sequences in $H^1 (\Omega)$ we have the following
result:

\begin{proposition}
  \label{prop:compactnessSts}Let $(u_{\varepsilon})$ be a sequence in $H^1
  (\Omega)$ that converges weakly to a limit $u \in H^1 (\Omega)$. Then
  $u_{\varepsilon} \twoheadrightarrow u_{}$ and there exists a function $v \in
  L^2 [ \Omega ; H^1_{\#} (Q) / \RR ]$ such that, up to a
  subsequence:
  \[ \nabla u_{\varepsilon} \twoheadrightarrow \nabla u + \nabla_y v. \]
\end{proposition}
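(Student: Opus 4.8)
The plan is to combine the $L^2$ two-scale compactness of Proposition \ref{prop:compactnessl2ts} with two integration-by-parts identities (the second one tested against $y$-divergence-free fields) and conclude by a Helmholtz-type orthogonal decomposition on the reference cell $Q$. Since $(u_\varepsilon)$ converges weakly in $H^1(\Omega)$ it is bounded there, so both $(u_\varepsilon)$ and $(\nabla u_\varepsilon)$ are bounded in $L^2$. By Proposition \ref{prop:compactnessl2ts} I extract a subsequence along which $u_\varepsilon \twoheadrightarrow u_0$ and $\nabla u_\varepsilon \twoheadrightarrow \xi_0$, with $u_0 \in L^2(\Omega \times Q)$ and $\xi_0 \in L^2(\Omega \times Q; \RR^3)$. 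First I identify $u_0$: testing $\nabla u_\varepsilon$ against $\varepsilon\, \psi(x, x/\varepsilon)$ with $\psi \in \mathcal{D}[\Omega; C^\infty_{\#}(Q; \RR^3)]$ and integrating by parts, the relation $\tmop{div}_x[\psi(x, x/\varepsilon)] = (\tmop{div}_x \psi)(x, x/\varepsilon) + \varepsilon^{-1}(\tmop{div}_y \psi)(x, x/\varepsilon)$ shows that the $\varepsilon$-prefactor annihilates both the $\nabla u_\varepsilon$ term and the $\tmop{div}_x \psi$ term in the limit, leaving $\int_{\Omega \times Q} u_0\, \tmop{div}_y \psi = 0$ for all such $\psi$. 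Hence $\nabla_y u_0 = 0$, so $u_0 = u_0(x)$; comparing with the weak $L^2$ limit against $x$-only test functions then gives $u_0 = u$. As the two-scale limit is thereby unique, the whole sequence satisfies $u_\varepsilon \twoheadrightarrow u$.

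Next I extract the structure of $\xi_0$. I test $\nabla u_\varepsilon$ against $\psi(x, x/\varepsilon)$ with $\psi \in \mathcal{D}[\Omega; C^\infty_{\#}(Q; \RR^3)]$ chosen \emph{$y$-divergence-free}, i.e. $\tmop{div}_y \psi \equiv 0$. Then the integration by parts produces no singular $\varepsilon^{-1}$ contribution, and passing to the limit (using $u_\varepsilon \twoheadrightarrow u$) yields
\[ \int_{\Omega \times Q} \xi_0 \cdot \psi \,\mathd y\, \mathd x = -\int_{\Omega \times Q} u\, \tmop{div}_x \psi \,\mathd y\, \mathd x = \int_{\Omega \times Q} \nabla u \cdot \psi \,\mathd y\, \mathd x, \]
where the last equality follows by integrating the $x$-variable back by parts (legitimate since $\psi$ has compact $x$-support and $\int_Q$ commutes with $\tmop{div}_x$, while $u$ depends only on $x$). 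Thus $\int_{\Omega \times Q} (\xi_0 - \nabla u) \cdot \psi = 0$ for every smooth $y$-divergence-free $\psi$.

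To finish, I introduce the subspace $\mathcal{V} \assign \{\nabla_y v : v \in L^2[\Omega; H^1_{\#}(Q)/\RR]\} \subset L^2(\Omega \times Q; \RR^3)$. The Poincaré–Wirtinger inequality on $H^1_{\#}(Q)/\RR$ bounds the map $v \mapsto \nabla_y v$ from below, so $\mathcal{V}$ is closed. Its orthogonal complement is exactly the set of $y$-divergence-free fields: indeed $\int_{\Omega \times Q} \Phi \cdot \nabla_y v = 0$ for all $v$ is equivalent, fiberwise in $x$ and after integration by parts, to $\tmop{div}_y \Phi = 0$. Since the smooth $y$-divergence-free fields are dense among all $y$-divergence-free $L^2$ fields (by Fourier truncation in $y$, which preserves the constraint $k \cdot \hat\Phi_k = 0$, and mollification in $x$ within the fixed planes $k^{\perp}$), the previous step gives $\xi_0 - \nabla u \in (\mathcal{V}^{\perp})^{\perp} = \mathcal{V}$. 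Therefore $\xi_0 = \nabla u + \nabla_y v$ for some $v \in L^2[\Omega; H^1_{\#}(Q)/\RR]$, which is the asserted identity $\nabla u_\varepsilon \twoheadrightarrow \nabla u + \nabla_y v$.

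The main obstacle is this last step: identifying the orthogonal complement of the $y$-divergence-free fields with the $y$-gradients while ensuring the potential $v$ can be selected measurably in $x$ with the stated $L^2[\Omega; H^1_{\#}(Q)/\RR]$ regularity. The fiberwise Fourier computation matching the two families of fields is elementary, but the measurable $x$-dependence and the integrability bound are what require care; they are secured precisely by working with the orthogonal projection in the parametrized Hilbert space $L^2(\Omega \times Q; \RR^3)$ together with the Poincaré–Wirtinger bound, which turns the projection onto $\mathcal{V}$ into a well-defined element $\nabla_y v$ with $v$ controlled in $L^2[\Omega; H^1_{\#}(Q)/\RR]$.
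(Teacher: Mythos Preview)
Your argument is correct and is precisely the standard Allaire--Nguetseng proof: extract two-scale limits of $u_\varepsilon$ and $\nabla u_\varepsilon$, kill the $y$-dependence of the first by testing against $\varepsilon\psi$, and identify the second by testing against $y$-divergence-free fields followed by the Helmholtz decomposition on $Q$. The paper states this proposition without proof (it is listed among the preliminaries taken from the literature), but the very same strategy---integration by parts against $y$-solenoidal test functions and the remark that ``the orthogonal complement of the divergence-free functions is the space of gradients''---is exactly what the paper carries out for the weighted $BL^1(\mathbb{R}^3)$ variant in Proposition~\ref{prop:extcompactStsc}, so your approach coincides with the paper's.
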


Next we recall that if the sequence $(u_{\varepsilon})$ is bounded in $L^2
(\Omega)$, it is possible to enlarge the class of test functions used in the
definition of two-scale convergence.

\begin{proposition}
  Let $(u_{\varepsilon})$ be a bounded sequence in $L^2 (\Omega)$ which
  two-scale converges to $u \in L^2 (\Omega \times Q)$. Then
  \[ \lim_{\varepsilon \rightarrow 0} \int_{\Omega} u_{\varepsilon} (x)
     \varphi (x, x / \varepsilon_{}) \,\mathd x = \int_{\Omega \times Q} u (x,
     y) \varphi (x, y) \,\mathd y \,\mathd x \]
  for every $\varphi \in L^2 [\Omega ; C_{\#} (Q)]$.
\end{proposition}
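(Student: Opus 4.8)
The plan is to argue by density, reducing the statement to the class of smooth oscillating test functions $\mathcal{D}[\Omega; C^\infty_\#(Q)]$ that appears in the definition of two-scale convergence. Two preliminary observations are needed. First, for $\varphi \in L^2[\Omega; C_\#(Q)]$ the map $x \mapsto \varphi(x, x/\varepsilon)$ is Lebesgue measurable on $\Omega$: approximating $\varphi$, in the Bochner sense, by a sequence of simple functions $\varphi_n(x) = \sum_i \chi_{A_i^n}(x)\, c_i^n$ with $c_i^n \in C_\#(Q)$ and $\| \varphi_n(x,\cdot) - \varphi(x,\cdot) \|_{C_\#(Q)} \to 0$ for $\tau$-a.e.\ $x$, the associated functions $\sum_i \chi_{A_i^n}(x)\, c_i^n(x/\varepsilon)$ are measurable and converge to $\varphi(x, x/\varepsilon)$ for $\tau$-a.e.\ $x \in \Omega$, since $|\varphi_n(x, x/\varepsilon) - \varphi(x, x/\varepsilon)| \leq \| \varphi_n(x,\cdot) - \varphi(x,\cdot) \|_{C_\#(Q)}$. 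Second, and this is the estimate that drives the argument, one has the pointwise bound $|\varphi(x, x/\varepsilon)| \leq \| \varphi(x, \cdot) \|_{C_\#(Q)}$, hence
\[
 \| \varphi(\cdot, \cdot/\varepsilon) \|_{L^2(\Omega)} \leq \| \varphi \|_{L^2[\Omega; C_\#(Q)]} \quad \text{uniformly in } \varepsilon,
\]
so that $\varphi(\cdot, \cdot/\varepsilon) \in L^2(\Omega)$; moreover, as $|Q| = 1$, $\| \varphi \|_{L^2(\Omega \times Q)} \leq \| \varphi \|_{L^2[\Omega; C_\#(Q)]}$, so both integrals in the statement make sense.

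Next I would fix $\varphi \in L^2[\Omega; C_\#(Q)]$ and $\delta > 0$, and pick $\psi \in \mathcal{D}[\Omega; C^\infty_\#(Q)]$ with $\| \varphi - \psi \|_{L^2[\Omega; C_\#(Q)]} < \delta$. This is possible because finite sums $\sum_j \theta_j(x)\, \eta_j(y)$ with $\theta_j \in \mathcal{D}(\Omega)$ and $\eta_j \in C^\infty_\#(Q)$ are dense in $L^2[\Omega; C_\#(Q)]$ — a consequence of the density of $C^\infty_\#(Q)$ in $(C_\#(Q), \| \cdot \|_\infty)$ (mollification on the torus), of $\mathcal{D}(\Omega)$ in $L^2(\Omega)$, and of finite-rank tensors in the Bochner space $L^2[\Omega; C_\#(Q)]$. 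With such a $\psi$ I would then split
\[
 \int_{\Omega} u_\varepsilon(x)\, \varphi(x, x/\varepsilon)\, \mathd x - \int_{\Omega \times Q} u\, \varphi\, \mathd y\, \mathd x = I_\varepsilon + J_\varepsilon + K,
\]
where $I_\varepsilon := \int_\Omega u_\varepsilon\, (\varphi - \psi)(\cdot, \cdot/\varepsilon)$, $J_\varepsilon := \int_\Omega u_\varepsilon\, \psi(\cdot, \cdot/\varepsilon) - \int_{\Omega \times Q} u\, \psi$, and $K := \int_{\Omega \times Q} u\, (\psi - \varphi)$.

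The three terms are estimated as follows. Since $\psi$ is an admissible test function and $u_\varepsilon \twoheadrightarrow u$, the definition of two-scale convergence gives $J_\varepsilon \to 0$ as $\varepsilon \to 0$. Cauchy--Schwarz on $\Omega \times Q$ and the second preliminary observation give $|K| \leq \| u \|_{L^2(\Omega \times Q)}\, \| \psi - \varphi \|_{L^2(\Omega \times Q)} \leq \| u \|_{L^2(\Omega \times Q)}\, \delta$. For $I_\varepsilon$, Cauchy--Schwarz on $\Omega$, the uniform bound $\| u_\varepsilon \|_{L^2(\Omega)} \leq C$, and once more the pointwise bound yield $|I_\varepsilon| \leq C\, \| (\varphi - \psi)(\cdot, \cdot/\varepsilon) \|_{L^2(\Omega)} \leq C\, \| \varphi - \psi \|_{L^2[\Omega; C_\#(Q)]} \leq C\, \delta$, uniformly in $\varepsilon$. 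Therefore $\limsup_{\varepsilon \to 0} \left| \int_\Omega u_\varepsilon\, \varphi(\cdot, \cdot/\varepsilon) - \int_{\Omega \times Q} u\, \varphi \right| \leq (C + \| u \|_{L^2(\Omega \times Q)})\, \delta$, and letting $\delta \downarrow 0$ finishes the proof.

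The only genuinely delicate point is the density assertion used to produce $\psi$; the rest is a routine $3\delta$-type estimate, and what makes it work is precisely the elementary bound $|\varphi(x, x/\varepsilon)| \leq \| \varphi(x, \cdot) \|_{C_\#(Q)}$, which controls the error terms uniformly in $\varepsilon$ without appealing to any averaging lemma. If one preferred to avoid that pointwise bound, one could instead estimate $\| (\varphi - \psi)(\cdot, \cdot/\varepsilon) \|_{L^2(\Omega)}^2$ by applying the generalized Riemann--Lebesgue Lemma (Proposition~\ref{lemma:genRiemLeb}) to $y \mapsto |(\varphi - \psi)(x, y)|^2$ after localizing $\Omega$ on small cubes, but that route is heavier and is not needed here.
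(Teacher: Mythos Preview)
The paper does not actually prove this proposition: it appears in the preliminaries section, which explicitly states that ``all results are stated without proof as they can be readily found in the references given below'' (in particular Allaire~\cite{Allaire1992} and Lukkassen--Nguetseng--Wall~\cite{lukkassen2002two}). So there is no paper proof to compare against.

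That said, your argument is correct and is precisely the standard one from those references: the uniform bound $\|\varphi(\cdot,\cdot/\varepsilon)\|_{L^2(\Omega)} \leq \|\varphi\|_{L^2[\Omega;C_\#(Q)]}$ (which is what makes $L^2[\Omega;C_\#(Q)]$ an \emph{admissible} class of test functions in Allaire's terminology), the density of $\mathcal{D}[\Omega;C^\infty_\#(Q)]$ in $L^2[\Omega;C_\#(Q)]$, and the resulting $3\delta$-splitting are exactly how this enlargement of the test class is obtained in the literature. Your handling of measurability via Bochner simple functions and your remark that the pointwise bound is what drives the uniformity in $\varepsilon$ are both on point.
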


Finally we recall a simple criteria that permits to
{\guillemotleft}bypass{\guillemotright} the problem concerning the convergence
of the product of two $L^2 (\Omega)$-weakly convergence sequences
(cfr.{\cite{Allaire1992,lukkassen2002two}}).

\begin{proposition}
  \label{Prop:2scalproduct}Let $(u_{\varepsilon})$ and
  $(v_{\varepsilon})$ be sequences in $L^2 (\Omega)$
  that respectively two-scale converge to $u$ and $v$ in $L^2 (\Omega \times
  Q)$. If at least one of them {\tmstrong{strongly}} two-scale converges, then
  \[ u_{\varepsilon} v_{\varepsilon} \twoheadrightarrow uv . \]
  In particular, if $(u_{\varepsilon} v_{\varepsilon})$ is bounded in $L^2
  (\Omega)$, from the previous proposition, we have
  \[ \int_{\Omega} u_{\varepsilon} (x) v_{\varepsilon} (x) \varphi (x, x /
     \varepsilon_{}) \,\mathd x = \int_{\Omega \times Q} u (x, y) v (x, y)
     \varphi (x, y) \,\mathd y \,\mathd x \]
  for every $\varphi \in L^2 [\Omega ; C_{\#} (Q)]$.
\end{proposition}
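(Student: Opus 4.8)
The plan is to deduce the statement directly from the definition of two-scale convergence by an approximation argument, with the strong two-scale convergence entering only to estimate a single error term. Assume without loss of generality that it is $(v_{\varepsilon})$ that strongly two-scale converges to $v$, the roles of the two sequences being symmetric. Recalling that a two-scale convergent sequence is bounded in $L^2 (\Omega)$, set $C \assign \sup_{\varepsilon} \| u_{\varepsilon} \|_{L^2 (\Omega)} < \infty$. What has to be shown is that for every $\varphi \in \mathcal{D} [\Omega ; C_{\#}^{\infty} (Q)]$,
\[
  \int_{\Omega} u_{\varepsilon} (x) v_{\varepsilon} (x) \varphi (x, x / \varepsilon)\, \mathd x \longrightarrow \int_{\Omega \times Q} u (x, y) v (x, y) \varphi (x, y)\, \mathd y\, \mathd x .
\]

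First I would approximate $v_{\varepsilon}$ by a smooth oscillating test function. Given $\delta > 0$, by density of $\mathcal{D} [\Omega ; C_{\#}^{\infty} (Q)]$ in $L^2 (\Omega \times Q)$ pick $\psi_{\delta} \in \mathcal{D} [\Omega ; C_{\#}^{\infty} (Q)]$ with $\| v - \psi_{\delta} \|_{L^2 (\Omega \times Q)} < \delta$, and set $\psi_{\delta, \varepsilon} (x) \assign \psi_{\delta} (x, x / \varepsilon)$. Two ingredients are needed. The first is that such an \emph{admissible} oscillating function strongly two-scale converges to $\psi_{\delta}$ -- in particular $\| \psi_{\delta, \varepsilon} \|_{L^2 (\Omega)} \to \| \psi_{\delta} \|_{L^2 (\Omega \times Q)}$; this is a Riemann--Lebesgue-type fact. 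The second, which is the core of the argument, follows by expanding the square $\| v_{\varepsilon} - \psi_{\delta, \varepsilon} \|_{L^2(\Omega)}^2 = \| v_{\varepsilon} \|_{L^2(\Omega)}^2 - 2 \int_{\Omega} v_{\varepsilon} \psi_{\delta, \varepsilon}\, \mathd x + \| \psi_{\delta, \varepsilon} \|_{L^2(\Omega)}^2$ and letting $\varepsilon \to 0$: since $\| v_{\varepsilon} \|_{L^2(\Omega)}^2 \to \| v \|_{L^2(\Omega \times Q)}^2$ by the strong two-scale convergence of $(v_{\varepsilon})$, $\int_{\Omega} v_{\varepsilon} \psi_{\delta, \varepsilon}\, \mathd x \to \int_{\Omega \times Q} v \psi_{\delta}\, \mathd y\, \mathd x$ by two-scale convergence of $(v_{\varepsilon})$ tested against the admissible function $\psi_{\delta}$, and $\| \psi_{\delta, \varepsilon} \|_{L^2(\Omega)}^2 \to \| \psi_{\delta} \|_{L^2(\Omega \times Q)}^2$ by the first ingredient, one obtains
\[
  \lim_{\varepsilon \to 0} \| v_{\varepsilon} - \psi_{\delta, \varepsilon} \|_{L^2 (\Omega)}^2 = \| v - \psi_{\delta} \|_{L^2 (\Omega \times Q)}^2 < \delta^2 .
\]
This is exactly where the hypothesis is used: under mere two-scale convergence one would only have $\liminf_{\varepsilon} \| v_{\varepsilon} \|_{L^2 (\Omega)}^2 \geqslant \| v \|_{L^2 (\Omega \times Q)}^2$ (lower semicontinuity of the norm), which does not control the difference.

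Then I would split
\[
  \int_{\Omega} u_{\varepsilon} v_{\varepsilon} \varphi (x, x / \varepsilon)\, \mathd x = \int_{\Omega} u_{\varepsilon} (\psi_{\delta} \varphi) (x, x / \varepsilon)\, \mathd x + \int_{\Omega} u_{\varepsilon} (v_{\varepsilon} - \psi_{\delta, \varepsilon}) \varphi (x, x / \varepsilon)\, \mathd x .
\]
Since $\psi_{\delta} \varphi \in \mathcal{D} [\Omega ; C_{\#}^{\infty} (Q)]$, the first term tends to $\int_{\Omega \times Q} u \psi_{\delta} \varphi$ by the two-scale convergence of $(u_{\varepsilon})$, while the second is bounded in modulus by $\| \varphi \|_{\infty} \| u_{\varepsilon} \|_{L^2 (\Omega)} \| v_{\varepsilon} - \psi_{\delta, \varepsilon} \|_{L^2 (\Omega)}$, whose $\limsup$ is at most $C \| \varphi \|_{\infty} \delta$. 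Adding the elementary estimate $| \int_{\Omega \times Q} u (\psi_{\delta} - v) \varphi | \leqslant \| u \|_{L^2 (\Omega \times Q)} \| \varphi \|_{\infty} \delta$, we obtain
\[
  \limsup_{\varepsilon \to 0} \Big| \int_{\Omega} u_{\varepsilon} v_{\varepsilon} \varphi (x, x / \varepsilon)\, \mathd x - \int_{\Omega \times Q} u v \varphi\, \mathd y\, \mathd x \Big| \leqslant \big( C + \| u \|_{L^2 (\Omega \times Q)} \big) \| \varphi \|_{\infty}\, \delta ,
\]
and letting $\delta \downarrow 0$ proves $u_{\varepsilon} v_{\varepsilon} \twoheadrightarrow u v$. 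The ``in particular'' clause is then immediate: if moreover $(u_{\varepsilon} v_{\varepsilon})$ is bounded in $L^2 (\Omega)$, the preceding proposition enlarges the class of admissible test functions to all $\varphi \in L^2 [\Omega ; C_{\#} (Q)]$.

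The only genuine obstacle is the approximation estimate on $\| v_{\varepsilon} - \psi_{\delta, \varepsilon} \|_{L^2 (\Omega)}$ above; everything else is bookkeeping with the triangle inequality and Cauchy--Schwarz. The one point worth double-checking is the first ingredient -- that a genuinely $(x, y)$-dependent smooth test function, sampled along $y = x / \varepsilon$, strongly two-scale converges to itself: the $x$-dependence forbids a one-line appeal to Proposition~\ref{lemma:genRiemLeb}, and one argues either via uniform continuity of $\psi_{\delta}$ in $x$ combined with a partition of $\Omega$ into $\varepsilon$-cubes, or by first treating separated sums $\sum_i a_i (x) b_i (x / \varepsilon)$ and then invoking density.
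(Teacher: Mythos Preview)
The paper does not give a proof of this proposition: it is placed in the ``Mathematical Preliminaries'' section, where the authors explicitly write that ``all results are stated without proof as they can be readily found in the references given below'' and cite \cite{Allaire1992,lukkassen2002two}. Your argument is correct and is precisely the standard one from those references: approximate the strongly two-scale convergent factor by an admissible oscillating test function, use the norm convergence to control $\|v_{\varepsilon}-\psi_{\delta}(\cdot,\cdot/\varepsilon)\|_{L^2(\Omega)}$, and then split the product integral via Cauchy--Schwarz. Your closing remark on the admissibility of smooth test functions (strong two-scale convergence of $\psi_{\delta}(x,x/\varepsilon)$ to $\psi_{\delta}$) correctly identifies the only nontrivial ingredient beyond the definitions; this is exactly Lemma~1.3 in \cite{Allaire1992}.
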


\section{The equicoercivity of the composite Gibbs-Landau free energy
functionals}\label{subsec:proof1}

This section is devoted to the proof of the equicoercivity of the family of
\tmtextsc{Gibbs-Landau} free energy functionals
$(\mathcal{G}^{\varepsilon}_{\mathcal{L}})_{\varepsilon \in \RR^+}$ expressed
by (\ref{eq:GLComposites}). Equicoercivity has an important role in
homogenization theory. In fact, the metric space in which to work, must be
able to guarantee the equicoercivity of the family of functionals under
consideration, i.e. the validity of the Fundamental Theorem of
$\Gamma$-convergence.

\begin{proposition}
  The family $(\mathcal{G}_{\mathcal{L}}^{\varepsilon})_{\varepsilon \in
  \RR^+}$ of {\tmname{Gibbs-Landau}} energy functionals is equicoercive on the
  metric space $\left( H^1 \left( \Omega, \SSbb^2 \right), d_{L^2 \left(
  \Omega, \SSbb^2 \right)} \right)$.
\end{proposition}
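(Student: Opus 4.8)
The plan is to verify equicoercivity in its standard form: for every $t \in \RR$ the sublevel set $\bigcup_{\varepsilon > 0} \{ \tmmathbf{m} \in H^1(\Omega, \SSbb^2) : \mathcal{G}_{\mathcal{L}}^{\varepsilon}(\tmmathbf{m}) \leqslant t \}$ is relatively compact in $\left( H^1(\Omega, \SSbb^2), d_{L^2(\Omega, \SSbb^2)} \right)$. Since $H^1(\Omega, \SSbb^2)$ carries the $L^2$ metric and $\Omega$ is bounded with Lipschitz boundary, by the Rellich--Kondrachov theorem it suffices to produce an $\varepsilon$-uniform bound on the full $H^1(\Omega, \RR^3)$ norm of every $\tmmathbf{m}$ in such a sublevel set, and then to check that the constraint $|\tmmathbf{m}| = 1$ is stable under the ensuing $L^2$-convergence so that the limit stays inside the metric space.

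To obtain the bound I would treat the four terms of $\mathcal{G}_{\mathcal{L}}^{\varepsilon}$ separately. Hypothesis $\tmmathbf{[H_1]}$ gives the coercive lower bound $\mathcal{E}_{\varepsilon}(\tmmathbf{m}) \geqslant c_{\tmop{ex}} \int_{\Omega} | \nabla \tmmathbf{m} |^2 \, \mathd \tau$. Hypothesis $\tmmathbf{[H_2]}$ makes $\varphi_{\tmop{an}}$ nonnegative, so $\mathcal{A}_{\varepsilon}(\tmmathbf{m}) \geqslant 0$. Testing the potential equation (\ref{eq:potential}) for $u_{M_{\varepsilon} \tmmathbf{m}}$ against $u_{M_{\varepsilon} \tmmathbf{m}}$ itself in the Beppo--Levi space yields the identity $- \int_{\Omega} \tmmathbf{h}_{\mathd}[M_{\varepsilon} \tmmathbf{m}] \cdot M_{\varepsilon} \tmmathbf{m} \, \mathd \tau = \| \nabla u_{M_{\varepsilon} \tmmathbf{m}} \|_{L^2(\RR^3)}^2 \geqslant 0$, hence $\mathcal{W}_{\varepsilon}(\tmmathbf{m}) \geqslant 0$. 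Finally, because every admissible $\tmmathbf{m}$ obeys $|\tmmathbf{m}| = 1$ a.e., Cauchy--Schwarz together with $\tmmathbf{[H_3]}$ gives $\mathcal{Z}_{\varepsilon}(\tmmathbf{m}) \geqslant - \mu_0 \| \tmmathbf{h}_a \|_{L^2(\Omega)} \| M_s \|_{L^{\infty}(Q)} | \Omega |^{1/2} =: - C_0$, where $C_0$ depends neither on $\varepsilon$ nor on $\tmmathbf{m}$.

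Combining these estimates, $\mathcal{G}_{\mathcal{L}}^{\varepsilon}(\tmmathbf{m}) \leqslant t$ forces $c_{\tmop{ex}} \| \nabla \tmmathbf{m} \|_{L^2(\Omega)}^2 \leqslant \mathcal{E}_{\varepsilon}(\tmmathbf{m}) \leqslant t + C_0$, so $\| \nabla \tmmathbf{m} \|_{L^2(\Omega)} \leqslant \sqrt{(t + C_0)/c_{\tmop{ex}}}$, while the pointwise constraint gives $\| \tmmathbf{m} \|_{L^2(\Omega)} = | \Omega |^{1/2}$; thus the sublevel set is bounded in $H^1(\Omega, \RR^3)$ uniformly in $\varepsilon$, with a bound depending only on $t$. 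By Rellich--Kondrachov any sequence drawn from it has an $L^2(\Omega, \RR^3)$-convergent subsequence; passing to a further subsequence converging a.e. shows the limit still satisfies $|\tmmathbf{m}| = 1$ a.e., and the uniform $H^1$ bound identifies the limit as an element of $H^1(\Omega, \RR^3)$ (its weak $H^1$ limit). Hence the limit lies in $H^1(\Omega, \SSbb^2)$ and the $L^2$-closure of $\bigcup_{\varepsilon} \{ \mathcal{G}_{\mathcal{L}}^{\varepsilon} \leqslant t \}$ is a compact subset of $\left( H^1(\Omega, \SSbb^2), d_{L^2(\Omega, \SSbb^2)} \right)$, which establishes equicoercivity.

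The argument is otherwise routine; the only step deserving care is the nonnegativity of the magnetostatic self-energy, i.e. the variational identity for $\mathcal{W}_{\varepsilon}$, which relies on $u_{M_{\varepsilon}\tmmathbf{m}}$ being the Lax--Milgram solution in $BL^1(\RR^3)$ and on a density argument justifying the integration by parts against an unbounded test function, together with the (standard) stability of the sphere constraint under $L^2$-convergence needed to keep the limit in the metric space. Everything else reduces to Cauchy--Schwarz, the uniform ellipticity of $\tmmathbf{[H_1]}$, and the compact Sobolev embedding.
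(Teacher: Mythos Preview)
Your proof is correct and follows essentially the same line as the paper's: both derive the uniform lower bound $\mathcal{G}_{\mathcal{L}}^{\varepsilon}(\tmmathbf{m}) \geqslant c_{\tmop{ex}}\|\nabla\tmmathbf{m}\|_{\Omega}^{2} - C_0$ from $\tmmathbf{[H_1]}$--$\tmmathbf{[H_3]}$ and then appeal to Rellich--Kondrachov. The only cosmetic difference is that the paper verifies the \emph{mild} form of equicoercivity (a fixed compact $K$ with $\inf_{H^1(\Omega,\SSbb^2)} \mathcal{G}_{\mathcal{L}}^{\varepsilon} = \inf_{K} \mathcal{G}_{\mathcal{L}}^{\varepsilon}$, obtained by comparison with constant magnetizations) while you check the stronger sublevel-set version; your handling of the sign of $\mathcal{W}_{\varepsilon}$ and of the Zeeman term is in fact more explicit than the paper's.
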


\begin{proof}
  According to the hypotheses $[H_1], [H_2]$ and $[H_3]$, there exist positive
  constants $c_{\tmop{ex}}, C_{\tmop{ex}}, C_s, C_{\tmop{an}} \in \RR^+$ such
  that for every $y\in Q$
  \[  0 < c_{\tmop{ex}} \leqslant a_{\tmop{ex}}
     (y) \leqslant C_{\tmop{ex}} \hspace{1em}, \hspace{1em} 0 \leqslant M_s
     (y) \leqslant C_s \hspace{1em}, \hspace{1em} 0 \leqslant \varphi (y,
     \tmmathbf{m}) \leqslant C_{\tmop{an}} . \]
  Therefore, for every $\varepsilon > 0$ on has
	\begin{equation}
	  c_{\tmop{ex}} \| \nabla \tmmathbf{m} \|_{\Omega}^2 \leqslant
    \mathcal{G}_{\mathcal{L}}^{\varepsilon} (\tmmathbf{m})
     \leqslant  C^{\star} (1 + \| \nabla \tmmathbf{m} \|^2_{\Omega})
	\end{equation}
  with $C^{\star} = \max \left\{ C_{\tmop{ex}}, \frac{{\mu}_0}{2} C_s^2 |
  \Omega |, C_s | \Omega |^{1 / 2} \| \tmmathbf{h}_a \|_{\Omega},
  C_{\tmop{an}}^{} | \Omega | \right\}$.
  
  Now observe that for every constant in space magnetization $\tmmathbf{u}$
  and for every $\varepsilon > 0$ on has
  $\mathcal{G}_{\mathcal{L}}^{\varepsilon} (\tmmathbf{u}) \leqslant
  C^{\star}$; thus
  \begin{equation}
	\inf_{\tmmathbf{m} \in H^1 \left( \Omega, \SSbb^2 \right)}
     \mathcal{G}_{\mathcal{L}}^{\varepsilon} = \inf_{\tmmathbf{m} \in K}
     \mathcal{G}_{\mathcal{L}}^{\varepsilon}. 
		\end{equation}
 To finish, we simply observe that, due  to {\tmname{Rellich--Kondrachov}} theorem, $K$ is a compact
  subset of $\left( H^1 ( \Omega, \SSbb^2 ), d_{L^2 ( \Omega,\SSbb^2 )} \right)$.
\end{proof}

\section{The $\Gamma$-limit of exchange energy functionals
$\mathcal{E}_{\varepsilon}$}\label{subsec:proof2}

The fundamental constraint of micromagnetic theory, i.e. the fact that the
domain of definition of the family $\mathcal{E}_{\varepsilon}$ is a manifold
value Sobolev space, plays a fundamental role in the homogenization process.
In fact, although the unconstrained problem has been fully investigated (see
{\cite{Braides1998,marcellini1978periodic,muller1987homogenization}}), it is
not possible to get full information about the manifold constrained
$\Gamma$-limit by just looking at the unconstrained one. This is due to the
well-known fact (see {\cite{DalMaso1993}}) that if
$(\mathcal{F}_{\varepsilon})$ is a family of functionals defined in the metric
space $X$ and $\Gamma$-converging to $\mathcal{F}$, and $\mathcal{G}= \Gamma
\text{-} \lim [\mathcal{G}_{\varepsilon} \assign \mathcal{F}_{\varepsilon | Y
\nobracket}]$ where $(\mathcal{G}_{\varepsilon}) \assign
(\mathcal{F}_{\varepsilon | Y \nobracket})$ represents the family of functionals
obtained as the restriction of $(\mathcal{F}_{\varepsilon})$ to some (metric)
subspace $Y$ of $X$, then $\mathcal{F}_{| Y \nobracket} \leqslant
\mathcal{G}$. Thus the identification of the manifold constrained
$\Gamma$-limit requires more effort.

\subsection{The tangential homogenization theorem}

In what follows we make use of the following theorem due to
{\tmname{Babadjian}} and {\tmname{Millot}} (see {\cite{Babadjian2009}}) in
which the dependence of the $\Gamma$-limit from the tangent bundle of the
manifold is taken into account via the so-called {\tmem{tangentially
homogenized energy density}}. We state the {\tmstrong{tangential
homogenization theorem}} {\tmname{(thm)}} in a bit less general form which is
adequate for our purposes.

\begin{proposition}
  {\tmname{(thm)}} \label{prop:baba}\tmtextup{{\cite{Babadjian2009}}} Let
  $\mathcal{M}$ be a connected smooth submanifold of $\RR^3$ without boundary
  and $g : \RR^3 \times \RR^{3 \times 3} \rightarrow \RR^+$ be a
  Carath{\'e}odory function such that
  \begin{enumerate}
    \item For every $\xi \in \RR^{3 \times 3}$ the function $g (\cdot, \xi)$
    is $Q$-periodic, i.e. such that if $(e_1, e_2, e_3)$ denotes the canonical
    basis of $\RR^3$, one has
    \[ \forall i \in \NN_3, \forall y \in \RR^3, \forall \xi \in \RR^{3 \times
       3} \hspace{1em} g (y + e_i, \xi) = g (y, \xi) . \]
    \item There exist $0 < \alpha \leqslant \beta < \infty$ such that
    \[ \alpha | \xi |^2 \leqslant g (y, \xi) \leqslant \beta (1 + | \xi |^2)
       \hspace{1em} \text{for a.e. $y \in \RR^3$ and all $\xi \in \RR^{3
       \times 3}$} . \]
  \end{enumerate}
  Then the family
  \begin{equation}
    \mathcal{E}_{\varepsilon} (\tmmathbf{m}) \assign \int_{\Omega} g (x /
    \varepsilon, \nabla \tmmathbf{m}) \mathd \tau_{} \label{eq:energyBM}
  \end{equation}
  defined in the metric space $(H^1 (\Omega, \mathcal{M}), d_{L^2 (\Omega,
  \mathcal{M})})$ $\Gamma$-converges to the functional
  \begin{equation}
    \mathcal{E}_{\hom} (\tmmathbf{m}) \assign \int_{\Omega} T_{} g_{\hom}
    (\tmmathbf{m}, \nabla \tmmathbf{m}) \mathd \tau, \label{eq:energyBMhom}
  \end{equation}
  where for every $s \in \mathcal{M}$ and $\xi \in [T_s (\mathcal{M})]^3$, $Q_t \assign [0, t]^3$,
  \begin{equation}
    T_{} g_{\hom} (s, \xi) = \lim_{t \rightarrow \infty}  \left(
    \inf_{\tmmathbf{\varphi} \in W_0^{1, \infty} (Q_t, T_s (\mathcal{M}))}
    \frac{1}{| Q_t |} \int_{Q_t} g (y, \xi + \nabla \tmmathbf{\varphi} (y))
    \mathd y \right), \label{eq:corrector}
  \end{equation}
  is the {\tmstrong{tangentially homogenized energy density}}.
\end{proposition}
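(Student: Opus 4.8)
My plan would be to prove the two $\Gamma$-convergence inequalities separately, treating the manifold constraint by \emph{freezing} the base point $s=\tmmathbf{m}(x_0)\in\mathcal{M}$ and using the nearest-point projection $\pi_{\mathcal{M}}$ onto $\mathcal{M}$, which is well defined and smooth on a tubular neighbourhood of every point of the smooth boundaryless submanifold $\mathcal{M}$. Note that equicoercivity and the micromagnetic origin of the functional are irrelevant here: this is a pure $\Gamma$-convergence statement for a (possibly non-convex) manifold-constrained integral functional with standard quadratic growth. Compactness is automatic: if $\tmmathbf{m}_\varepsilon\to\tmmathbf{m}$ in $L^2(\Omega,\RR^3)$ with $\sup_\varepsilon\mathcal{E}_\varepsilon(\tmmathbf{m}_\varepsilon)<\infty$, then $\alpha|\xi|^2\le g(y,\xi)$ forces $(\nabla\tmmathbf{m}_\varepsilon)$ bounded in $L^2$, so $\tmmathbf{m}_\varepsilon\rightharpoonup\tmmathbf{m}$ in $H^1(\Omega,\RR^3)$ and, along an a.e.-convergent subsequence, $\tmmathbf{m}(x)\in\mathcal{M}$ a.e., i.e. $\tmmathbf{m}\in H^1(\Omega,\mathcal{M})$. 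A preliminary step, independent of $\Omega$, is to show that the limit in \eqref{eq:corrector} exists for every $(s,\xi)$: setting $m_t(s,\xi)\assign\inf\{\int_{Q_t}g(y,\xi+\nabla\tmmathbf{\varphi})\,\mathd y:\tmmathbf{\varphi}\in W_0^{1,\infty}(Q_t,T_s(\mathcal{M}))\}$, the $Q$-periodicity of $g$ together with gluing of competitors on translated unit subcubes yields the (super)additivity of $t\mapsto m_t(s,\xi)$ along integers, and the growth bounds give the control needed for a De Giorgi--type subadditive argument, so $m_t/|Q_t|\to Tg_{\hom}(s,\xi)$, which one checks to be Carathéodory, of the same quadratic growth as $g$, and continuous in $s$ because $s\mapsto T_s(\mathcal{M})$ is.

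For the \textbf{lower bound} I would use the blow-up method. Fix $\tmmathbf{m}\in H^1(\Omega,\mathcal{M})$ and $\tmmathbf{m}_\varepsilon\to\tmmathbf{m}$ with $\liminf_\varepsilon\mathcal{E}_\varepsilon(\tmmathbf{m}_\varepsilon)<\infty$, and consider the finite nonnegative Radon measures $\mu_\varepsilon(A)\assign\int_A g(x/\varepsilon,\nabla\tmmathbf{m}_\varepsilon)\,\mathd\tau$; up to a subsequence $\mu_\varepsilon$ converges weakly-$*$ to some $\mu$. At a.e. $x_0\in\Omega$ one may assume that $x_0$ is a Lebesgue point of $\tmmathbf{m}$ and of $\nabla\tmmathbf{m}$ and that $\mathd\mu/\mathd\tau(x_0)$ exists. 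Restricting $\tmmathbf{m}_\varepsilon$ to cubes $x_0+\rho Q$, subtracting the affine part $\tmmathbf{m}(x_0)+\nabla\tmmathbf{m}(x_0)(x-x_0)$ and rescaling by $1/\rho$, one produces — after a diagonal argument in which $\rho=\rho(\varepsilon)\to0$ is chosen so slowly that $\rho/\varepsilon\to\infty$ — maps on a large cube $Q_t$ that are \emph{admissible competitors} for $m_t(\tmmathbf{m}(x_0),\nabla\tmmathbf{m}(x_0))$ up to an error that vanishes, because maps valued in $\mathcal{M}$ and oscillating by $O(\rho)$ around $\tmmathbf{m}(x_0)$ deviate from $[T_{\tmmathbf{m}(x_0)}(\mathcal{M})]^3$ only by curvature terms of order $\rho$. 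This gives $\mathd\mu/\mathd\tau(x_0)\ge Tg_{\hom}(\tmmathbf{m}(x_0),\nabla\tmmathbf{m}(x_0))$, and integrating, $\liminf_\varepsilon\mathcal{E}_\varepsilon(\tmmathbf{m}_\varepsilon)\ge\mu(\Omega)\ge\mathcal{E}_{\hom}(\tmmathbf{m})$.

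For the \textbf{upper bound} I would avoid any global smooth approximation of $\mathcal{M}$-valued $H^1$ maps — genuinely obstructed when $\dim\Omega=3$, e.g. $x\mapsto x/|x|$ for $\mathcal{M}=\SSbb^2$ — by arguing through integral representation. One shows that the localized upper $\Gamma$-limit $A\mapsto\Gamma\text{-}\limsup_\varepsilon\int_A g(x/\varepsilon,\nabla\tmmathbf{m})\,\mathd\tau$ is the trace of a finite Radon measure absolutely continuous with respect to $\tau$ and satisfies the hypotheses of the Buttazzo--Dal Maso integral representation theorem, so $\Gamma\text{-}\limsup_\varepsilon\mathcal{E}_\varepsilon(\tmmathbf{m})=\int_\Omega f(x,\tmmathbf{m}(x),\nabla\tmmathbf{m}(x))\,\mathd\tau$ for some Carathéodory $f$ of quadratic growth; by the lower bound, $f\ge Tg_{\hom}$. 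For $f\le Tg_{\hom}$ it suffices to test at a point $x_0$ with a purely \emph{local} competitor: on a small cube around $x_0$ put $\tmmathbf{m}_\varepsilon(x)\assign\pi_{\mathcal{M}}\bigl(s+\xi\,(x-x_0)+\varepsilon\,\tmmathbf{\varphi}_t(x/\varepsilon)\bigr)$, where $s=\tmmathbf{m}(x_0)$, $\xi=\nabla\tmmathbf{m}(x_0)\in[T_s(\mathcal{M})]^3$, and $\tmmathbf{\varphi}_t$ is a cut-off, $Q$-periodized near-minimizer of $m_t(s,\xi)$ with values in $T_s(\mathcal{M})$; since the perturbation is $O(\varepsilon)$ in $L^\infty$, the argument of $\pi_{\mathcal{M}}$ stays in its tubular neighbourhood, so $\tmmathbf{m}_\varepsilon$ is an admissible (locally smooth) map with $\nabla\tmmathbf{m}_\varepsilon(x)=\xi+\nabla\tmmathbf{\varphi}_t(x/\varepsilon)$ up to an $L^\infty$ error of size $O(\varepsilon)$, negligible in the quadratic functional. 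Passing to the limit $\varepsilon\to0$ (generalized Riemann--Lebesgue, Proposition~\ref{lemma:genRiemLeb}), then refining the cube size, then $t\to\infty$, and diagonalizing yields $f(x_0,s,\xi)\le Tg_{\hom}(s,\xi)$; together with the lower bound this proves $\Gamma\text{-}\lim_\varepsilon\mathcal{E}_\varepsilon=\mathcal{E}_{\hom}$ on $(H^1(\Omega,\mathcal{M}),d_{L^2(\Omega,\mathcal{M})})$.

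The \textbf{main obstacle} is the lower bound, specifically the simultaneous handling of the three scales $\varepsilon$ (oscillation of $g$), $\rho$ (macroscopic blow-up) and $t$ (cell size), made worse by the fact that $g$ need not be convex in $\xi$, so one cannot simply pass to the two-scale limit $\nabla\tmmathbf{m}_\varepsilon\twoheadrightarrow\nabla\tmmathbf{m}+\nabla_y v$ and invoke lower semicontinuity. The diagonalization must be arranged so that the rescaled maps become genuine tangential competitors for the \emph{$t$-cell} problem while the $O(\rho)$ curvature deviation of $\mathcal{M}$ and the boundary-layer error of the cut-off both disappear in the right order; quantifying these errors is the technical heart of the argument, and it is exactly what is carried out in \cite{Babadjian2009}, from which the statement is borrowed.
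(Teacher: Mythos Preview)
The paper does not prove this proposition at all: immediately after the statement it writes ``We refer the reader to \cite{Babadjian2009} for a more general version and the proof'', and then uses the result as a black box in Section~\ref{subsec:proof2}. So there is no ``paper's own proof'' to compare your proposal against.

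That said, your sketch is a faithful outline of the strategy actually employed in \cite{Babadjian2009}: existence of the cell limit via a subadditive argument, the \textsc{Fonseca--M\"uller} blow-up method for the $\Gamma$-liminf (freezing the base point, linearizing $\mathcal{M}$ near $s=\tmmathbf{m}(x_0)$, and diagonalizing in $\varepsilon,\rho,t$), and---crucially---an integral-representation route for the $\Gamma$-limsup precisely to bypass the failure of density of smooth maps in $H^1(\Omega,\mathcal{M})$ when $\dim\Omega\geqslant 3$. You also correctly identify the genuine technical crux (matching boundary data on the $t$-cell while controlling the curvature error of order $\rho$ and the slicing/cut-off layer). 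Two places where your sketch is thinner than the actual proof and would need real work: (i) showing that the localized $\Gamma$-limsup is the restriction of a Radon measure and satisfies the full set of hypotheses of the integral-representation theorem (locality, inner regularity, $H^1$-lower semicontinuity on $H^1(\Omega,\mathcal{M})$); and (ii) in the lower bound, turning the rescaled sequences into honest competitors in $W_0^{1,\infty}(Q_t,T_s(\mathcal{M}))$, which requires a De~Giorgi slicing step together with a quantitative projection estimate onto $T_s(\mathcal{M})$---your ``$O(\rho)$ curvature terms'' remark is the right intuition but not yet an argument. These are exactly the points where \cite{Babadjian2009} spends its effort, as you yourself note in the last sentence.
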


We refer the reader to {\cite{Babadjian2009}} for a more general version and
the proof.

\subsubsection{The role of tangent bundle.}

Let us emphasize why the tangent bundle $[\mathcal{T} (\mathcal{M})]^3 \assign
\sqcup_{s \in \mathcal{M}} [T_s (\mathcal{M})]^3$ plays a role. In order to
understand this, it is convenient to develop a minimizer
$\tmmathbf{m}_{\varepsilon}$ of $\mathcal{E}_{\varepsilon}$ under the
so-called multiscale expansion
\begin{equation}
  \tmmathbf{m}_{\varepsilon} (x) =\tmmathbf{m}_{0^{}} (x) \noplus +
  \varepsilon \tmmathbf{m}_1 \left( x, \frac{x}{\varepsilon} \right) + o
  (\varepsilon), \label{eq:multiscaleexp} \noplus \noplus
\end{equation}
where $\tmmathbf{m}_0 \nocomma, \tmmathbf{m}_1$ are respectively a minimizer of the
$\Gamma$-limit of $\mathcal{E}_{\varepsilon}$ and the null average first order
corrector. Clearly, due to the constraint $\tmmathbf{m}_{\varepsilon}(x) \in \mathcal{M}$ for a.e. $x \in \Omega$, we get
\begin{equation}
  0 \equiv \tmmathbf{n} [\tmmathbf{m}_{\varepsilon}] \cdot \nabla
  \tmmathbf{m}_{\varepsilon}, \label{eq:formaltwoscaleconstraint}
\end{equation}
where we have denoted by $\tmmathbf{n}$ the local normal field defined around
$\tmmathbf{m}_{\varepsilon} (x) \in \mathcal{M}$. By passing to the two-scale
limit in both terms of (\ref{eq:formaltwoscaleconstraint}), we formally reach
the equality $0 \equiv \tmmathbf{n} [\tmmathbf{m}_0] \cdot (\nabla
\tmmathbf{m}_0 + \nabla_y \tmmathbf{m}_1) \equiv \tmmathbf{n} [\tmmathbf{m}_0]
\cdot \nabla_y \tmmathbf{m}_1$, which shows that $\tmmathbf{n} [\tmmathbf{m}_0
(x)] \cdot \tmmathbf{m}_1 (x, y)$ does not depend on $y$. Then, passing to the
average over $Q$ we get $\tmmathbf{m}_1 (x, y) \in T_{\tmmathbf{m}_0 (x)}
(\mathcal{M})$. The rigorous formulation of the previous idea is the object of
the next {\tmname{Proposition}}:

\begin{proposition}
  \label{prop:compactnessStsManifold}Let $\mathcal{M}$ be a connected smooth
  submanifold of $\RR^3$, and let $(\tmmathbf{m}_{\varepsilon})$ be a sequence
  in $H^1 (\Omega, \mathcal{M})$ that converges weakly to a limit
  $\tmmathbf{m} \in H^1 \left( \Omega, \RR^3 \right)$. Then
  \[ \tmmathbf{m} \in H^1 (\Omega, \mathcal{M}) \hspace{1em}
     \text{{and}} \hspace{1em} \tmmathbf{m}_{\varepsilon}
     \twoheadrightarrow \tmmathbf{m}. \]
  Moreover there exists a null average function $\tmmathbf{v} \in L^2 [
  \Omega ; H^1_{\#} (Q) / \RR ]$ such that, up to the extraction of a a subsequence:
  \[ \nabla \tmmathbf{m}_{\varepsilon} \twoheadrightarrow \nabla \tmmathbf{m}+
     \nabla_y \tmmathbf{v} \hspace{1em} \text{{{and}}} \hspace{1em}
     \tmmathbf{v} (x, y) \in T_{\tmmathbf{m}_0 (x)} (\mathcal{M}) \text{ for
     a.e. $(x, y) \in \Omega \times Q$} . \]
\end{proposition}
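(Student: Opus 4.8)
The plan is to break the statement into three parts and handle them in order of increasing difficulty: (i) $\tmmathbf{m}\in H^1(\Omega,\mathcal{M})$; (ii) the two-scale convergence $\tmmathbf{m}_{\varepsilon}\twoheadrightarrow\tmmathbf{m}$ together with the existence of the corrector $\tmmathbf{v}$; and (iii) the tangency $\tmmathbf{v}(x,y)\in T_{\tmmathbf{m}(x)}(\mathcal{M})$ (in the notation of the statement, $\tmmathbf{m}_0$ is the weak limit $\tmmathbf{m}$). Parts (i)--(ii) are soft; part (iii) is where the manifold constraint genuinely enters and is the main obstacle.

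For (i): since $(\tmmathbf{m}_{\varepsilon})$ is bounded in $H^1(\Omega,\RR^3)$ and $\tmmathbf{m}_{\varepsilon}\rightharpoonup\tmmathbf{m}$, the Rellich--Kondrachov theorem gives $\tmmathbf{m}_{\varepsilon}\to\tmmathbf{m}$ strongly in $L^2(\Omega,\RR^3)$, hence along a subsequence $\tmmathbf{m}_{\varepsilon}(x)\to\tmmathbf{m}(x)$ for $\tau$-a.e.\ $x$. Since $\mathcal{M}$ is closed in $\RR^3$ (and, in the application $\mathcal{M}=\SSbb^2$, compact) and $\tmmathbf{m}_{\varepsilon}$ is $\mathcal{M}$-valued, we get $\tmmathbf{m}(x)\in\mathcal{M}$ a.e.; combined with $\tmmathbf{m}\in H^1(\Omega,\RR^3)$ this is exactly $\tmmathbf{m}\in H^1(\Omega,\mathcal{M})$.

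For (ii): apply Proposition \ref{prop:compactnessSts} to each scalar component $m^i_{\varepsilon}$ ($i=1,2,3$), which is bounded in $H^1(\Omega)$ with $m^i_{\varepsilon}\rightharpoonup m^i$. This yields $m^i_{\varepsilon}\twoheadrightarrow m^i$ and, after a common (diagonal) extraction, functions $v^i\in L^2[\Omega;H^1_{\#}(Q)/\RR]$ with $\nabla m^i_{\varepsilon}\twoheadrightarrow\nabla m^i+\nabla_y v^i$. Setting $\tmmathbf{v}:=(v^1,v^2,v^3)$ and selecting, for a.e.\ $x$, the representative of $\tmmathbf{v}(x,\cdot)$ with $\int_Q\tmmathbf{v}(x,y)\,\mathd y=0$, we obtain $\tmmathbf{m}_{\varepsilon}\twoheadrightarrow\tmmathbf{m}$ and $\nabla\tmmathbf{m}_{\varepsilon}\twoheadrightarrow\nabla\tmmathbf{m}+\nabla_y\tmmathbf{v}$ with $\tmmathbf{v}$ of null $y$-average.

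For (iii), the main obstacle, the plan is to make rigorous the formal computation in the discussion preceding the statement. Fix $\phi\in C^1(\RR^3)$ with bounded gradient that vanishes on $\mathcal{M}$: for $\mathcal{M}=\SSbb^2$ take simply $\phi(p)=|p|^2-1$, so $\nabla\phi(p)=2p$ and $T_p(\mathcal{M})=(\nabla\phi(p))^{\perp}$; for a general $\mathcal{M}$, assemble such functions from local defining maps of $\mathcal{M}$ truncated to have global support. Since $\tmmathbf{m}_{\varepsilon}$ is $\mathcal{M}$-valued, the chain rule for Sobolev maps gives $\phi\circ\tmmathbf{m}_{\varepsilon}\equiv0$, hence $\nabla\phi(\tmmathbf{m}_{\varepsilon})\cdot\partial_{x_j}\tmmathbf{m}_{\varepsilon}=0$ a.e.\ for every $j$, and likewise $\nabla\phi(\tmmathbf{m})\cdot\partial_{x_j}\tmmathbf{m}=0$ by (i). Because $\tmmathbf{m}_{\varepsilon}\to\tmmathbf{m}$ strongly in $L^2$ and a.e.\ while $\nabla\phi$ is continuous and bounded, dominated convergence gives $\nabla\phi(\tmmathbf{m}_{\varepsilon})\to\nabla\phi(\tmmathbf{m})$ strongly in $L^2(\Omega,\RR^3)$, hence strongly in the two-scale sense with $y$-independent limit. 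Proposition \ref{Prop:2scalproduct} then identifies the two-scale limit of each product $\partial_{x_j}\tmmathbf{m}_{\varepsilon}\cdot\nabla\phi(\tmmathbf{m}_{\varepsilon})$ as $(\partial_{x_j}\tmmathbf{m}(x)+\partial_{y_j}\tmmathbf{v}(x,y))\cdot\nabla\phi(\tmmathbf{m}(x))$, and since the left-hand sides vanish identically, uniqueness of the two-scale limit forces
\[
 \bigl(\partial_{x_j}\tmmathbf{m}(x)+\partial_{y_j}\tmmathbf{v}(x,y)\bigr)\cdot\nabla\phi(\tmmathbf{m}(x))=0
 \qquad\text{for a.e. }(x,y)\in\Omega\times Q;
\]
subtracting $\partial_{x_j}\tmmathbf{m}\cdot\nabla\phi(\tmmathbf{m})=0$ leaves $\partial_{y_j}\tmmathbf{v}(x,y)\cdot\nabla\phi(\tmmathbf{m}(x))=0$. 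Since $\nabla\phi(\tmmathbf{m}(x))$ is independent of $y$, the function $y\mapsto\tmmathbf{v}(x,y)\cdot\nabla\phi(\tmmathbf{m}(x))$ has vanishing $y$-gradient; being in $H^1_{\#}(Q)$ for a.e.\ $x$ it is constant on the connected cell $Q$, and its $Q$-average is $0$ because $\tmmathbf{v}$ has null average, so it vanishes identically. Hence $\tmmathbf{v}(x,y)\perp\nabla\phi(\tmmathbf{m}(x))$ a.e.\ for every admissible $\phi$; choosing a countable family of such $\phi$ whose gradients span the normal space $N_q(\mathcal{M})$ at every $q\in\mathcal{M}$ and intersecting the corresponding null sets yields $\tmmathbf{v}(x,y)\in T_{\tmmathbf{m}(x)}(\mathcal{M})$ for a.e.\ $(x,y)$, as claimed. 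The genuinely delicate point is this last upgrade --- from ``$\partial_{y_j}\tmmathbf{v}$ is normal to $\mathcal{M}$'' to ``$\tmmathbf{v}$ is tangent to $\mathcal{M}$'' --- which works only because the corrector $\tmmathbf{v}$ was normalized to have zero average in $y$.
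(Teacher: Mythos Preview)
Your proof is correct and follows essentially the same strategy as the paper's own argument: reduce to Proposition~\ref{prop:compactnessSts} for the existence of the corrector, then exploit the pointwise identity ``normal to $\mathcal{M}$ dotted into $\nabla\tmmathbf{m}_{\varepsilon}$ vanishes'', upgrade it to the two-scale limit via the strong convergence of the normal and Proposition~\ref{Prop:2scalproduct}, and conclude tangency from the zero-average normalization of $\tmmathbf{v}$. The only cosmetic differences are that the paper works directly with the normal field $\tmmathbf{n}(\tmmathbf{m})$ (and in the displayed computation effectively specializes to $\mathcal{M}=\SSbb^2$, where $\tmmathbf{n}(\tmmathbf{m})=\tmmathbf{m}$), tests against $\tmmathbf{\varphi}(x,x/\varepsilon)$ and integrates by parts in $y$ to obtain the distributional identity~(\ref{eq:indipendencefromy}), whereas you phrase the same step via a global defining function $\phi$ and a pointwise identification of the two-scale limit; your formulation has the minor advantage of making the extension to higher-codimension $\mathcal{M}$ explicit through the countable family of defining functions.
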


\begin{proof}
  In view of Proposition \ref{prop:compactnessSts}, we only need to prove that 
 $\tmmathbf{v} (x, y) \in T_{\tmmathbf{m}_0 (x)}
  (\mathcal{M})$ for a.e. $(x, y) \in \Omega \times Q$. To this end, let us
  denote by $\tmmathbf{n} (\tmmathbf{m})$ the normal vector at $\tmmathbf{m}
  \in \mathcal{M}$ and observe that it is sufficient to prove that the scalar
  function $\tmmathbf{n} (\tmmathbf{m} (x)) \cdot \tmmathbf{v} (x, y)$ does
  not depend on the $y$ variable, i.e. that in the language of distributions
  on $\Omega \times Y$ one has
  \begin{equation}
    \int_{\Omega \times Q} [\tmmathbf{n} (\tmmathbf{m} (x)) \cdot \tmmathbf{v}
    (x, y)] \tmop{div}_y \tmmathbf{\varphi} (x, y) \mathd y \mathd x = 0
    \hspace{2em} \forall \tmmathbf{\varphi} \in \mathcal{D} [\Omega ;
    C^{\infty}_{\#} (Q)] . \label{eq:indipendencefromy}
  \end{equation}
  Indeed, as far as $\tmmathbf{n} (\tmmathbf{m} (x)) \cdot \tmmathbf{v} (x,
  y)$ is independent from the $y$ variable, since by assumption $\langle
  \tmmathbf{v} (x, \cdot) \rangle_Q = 0$ for a.e. $x \in \Omega$, one has
  $\tmmathbf{n} (\tmmathbf{m} (x)) \cdot \tmmathbf{v} (x, y) =\tmmathbf{n}
  (\tmmathbf{m} (x)) \cdot \langle \tmmathbf{v} (x, \cdot) \rangle_Q = 0$ and
  therefore $\tmmathbf{v} (x, y) \in T_{\tmmathbf{m} (x)} (\mathcal{M})$ for
  a.e. $(x, y) \in \Omega \times Q$.
  
  To prove (\ref{eq:indipendencefromy}) we note that since
  $\tmmathbf{m}_{\varepsilon} \rightarrow \tmmathbf{m}$ in $L^2 (\Omega)$, one
  also has $\tmmathbf{n} (\tmmathbf{m}_{\varepsilon}) \rightarrow \tmmathbf{n}
  (\tmmathbf{m})$ in $L^2 (\Omega)$. Therefore the family $\tmmathbf{n}
  (\tmmathbf{m}_{\varepsilon})$ strongly two-scale converges to
  ${\tmmathbf{n}}\left({\tmmathbf{m}}\right)$ and moreover $\tmmathbf{0}=
  (\tmmathbf{n} (\tmmathbf{m}_{\varepsilon}) \cdot \nabla,
  \tmmathbf{m}_{\varepsilon})_{\Omega} \rightarrow (\tmmathbf{n}
  (\tmmathbf{m}) \cdot \nabla, \tmmathbf{m})_{\Omega}$. Hence, due to
  Proposition \ref{Prop:2scalproduct} we get
\begin{align*}
       0 &= \lim_{\varepsilon \rightarrow 0} \int_{\Omega}
       [\tmmathbf{m}_{\varepsilon} (x) \cdot \nabla \tmmathbf{m}_{\varepsilon}
       (x)] \cdot \tmmathbf{\varphi} (x, x / \varepsilon) \mathd x \nonumber\\ 
			& = \int_{\Omega \times Q} \tmmathbf{m} (x) \cdot [\nabla \tmmathbf{m} (x)
       + \nabla_y \tmmathbf{v} (x, y)] \cdot \tmmathbf{\varphi} (x, y) \, \mathd
       y \mathd x \nonumber\\
       &  =  \int_{\Omega \times Q} [\tmmathbf{m} (x) \cdot \nabla_y
       \tmmathbf{v} (x, y)] \cdot \tmmathbf{\varphi} (x, y) \, \mathd y \mathd
       x \nonumber\\
       & = - \int_{\Omega \times Q} [\tmmathbf{m} (x) \cdot \tmmathbf{v}
       (x, y)] \tmop{div}_y \tmmathbf{\varphi} (x, y) \, \mathd x,
     \end{align*}
  i.e. the desired relation (\ref{eq:indipendencefromy}).
\end{proof}

\subsubsection{The role of tangentially homogenized energy density}

Although the following considerations are not completely rigorous, they give a very good
explanation of the idea behind the expression of the tangentially homogenized
energy density (\ref{eq:corrector}); they are mainly based on the notion of
two-scale convergence while {\cite{Babadjian2009}} relies on $\Gamma$-convergence.

Let us denote by $(\mathcal{E}_{\mathcal{M}}^{\varepsilon})$ the family of
functionals
\begin{equation}
  \tmmathbf{m} \in (H^1 (\Omega, \mathcal{M}), d_{L^2 (\Omega, \mathcal{M})})
  \mapsto \int_{\Omega} a_{\tmop{ex}} \left( \frac{x}{\varepsilon} \right) |
  \nabla \tmmathbf{m} (x) |^2 \mathd x,
\end{equation}
and let us denote by $\mathcal{E}_{\mathcal{M}}$ its $\Gamma$-limit in the
metric space $(H^1 (\Omega, \mathcal{M}), d_{L^2 (\Omega, \mathcal{M})})$.
Since $H^1 (\Omega, \mathcal{M})$ is a metric subspace of $H^1 \left( \Omega,
\RR^3 \right)$, from the properties of $\Gamma$-convergence under restriction
to subspaces (see {\cite{DalMaso1993}}), we know that for every
$\tmmathbf{m}_0 \in H^1 (\Omega, \mathcal{M})$ and for every
$(\tmmathbf{m}_{\varepsilon}) \in \mathfrak{C}_{d_{L^2 (\Omega, \mathcal{M})}}
(\tmmathbf{m}_0)$
\begin{equation}
  \mathcal{E}_{\RR^3} (\tmmathbf{m}_0) \leqslant \mathcal{E}_{\mathcal{M}}
  (\tmmathbf{m}_0) \leqslant \liminf_{\varepsilon \rightarrow 0}
  \mathcal{E}_{\mathcal{M}}^{\varepsilon} (\tmmathbf{m}_{\varepsilon}) .
  \label{eq:tempupperboundgamma}
\end{equation}
Now, let $\mathcal{M}$ be a convex smooth manifold, i.e. a smooth manifold
lying on some subset of the boundary of a {\tmstrong{convex bounded domain}}
$\Theta_{\mathcal{M}}$. Next, define the {\tmstrong{tangent cone}} of
$\mathcal{M}$ at $s \in \mathcal{M}$ by the position 
\[T_s (\mathcal{M})
\assign \left\{ \tmmathbf{v} \in \RR^3 \; : \; \forall \lambda \geqslant 0, s
+ \lambda \tmmathbf{v} \notin \Theta_{\mathcal{M}} \right\},
\] 
and denote by
$\Pi_{\mathcal{M}}$ the nearest point projection on $\Theta_{\mathcal{M}}$.

Since $\Pi_{\mathcal{M}}$ is a (Lipschitz) non-expansive map,  one has
$\Pi_{\mathcal{M}} [\tmmathbf{u}] \in H^1 (\Omega, \mathcal{M})$ for every
$\tmmathbf{u} \in H^1 (\Omega, \Theta^c_{\mathcal{M}})$, moreover
(see {\cite{alouges1997new}})
\begin{equation}
  | \nabla \Pi_{\mathcal{M}} [\tmmathbf{u}] | \leqslant | \nabla \tmmathbf{u}
  | \hspace{1em} \tau \text{-a.e. in } \Omega . \label{eq:alougesobservation}
\end{equation}
Let us now suppose that $\tmmathbf{m}_0$ is sufficiently smooth so that for
every test function $\tmmathbf{m}_1 \in \mathcal{D} [\Omega ; C^{\infty}_{\#}
(Q, T_{\tmmathbf{m}_0} (\mathcal{M}))]$, the family
$\tmmathbf{m}_{\varepsilon} (x) \assign \tmmathbf{m}_0 (x) + \varepsilon
\tmmathbf{m}_1 (x, x / \varepsilon)$ belongs to $H^1 (\Omega,
\Theta^c_{\mathcal{M}})$. In this hypothesis one has 
\[\Pi_{\mathcal{M}}
[\tmmathbf{m}_{\varepsilon}] \rightarrow \tmmathbf{m}_0 \text{ \quad in } (H^1 (\Omega,
\mathcal{M}), d_{L^2 (\Omega, \mathcal{M})}).\]
 Therefore, taking into account
the estimates (\ref{eq:alougesobservation}), (\ref{eq:tempupperboundgamma}) and the fact that $\nabla_y \tmmathbf{m}_1$ is
an admissible test function (see {\cite{Allaire1992}}, Remark 1.11),  we get (passing
to the two-scale limit):
  \begin{align*}
    \mathcal{E}_{\RR^3} (\tmmathbf{m}_0) \leqslant \mathcal{E}_{\mathcal{M}}
    (\tmmathbf{m}_0) & \leqslant  \liminf_{\varepsilon \rightarrow 0}
    \mathcal{E}_{\mathcal{M}}^{\varepsilon} (\Pi_{\mathcal{M}}
    [\tmmathbf{m}_{\varepsilon}])\\
    & \leqslant  \lim_{\varepsilon \rightarrow 0} \int_{\Omega}
    a_{\tmop{ex}} \left( \frac{x}{\varepsilon} \right) \left| \nabla
    \tmmathbf{m}_0 (x) + \nabla_y \tmmathbf{m}_1 \left( x,
    \frac{x}{\varepsilon} \right) \right|^2 \mathd x\\
    & =  \int_{\Omega} \left[ \int_Q a_{\tmop{ex}} (y) | \nabla
    \tmmathbf{m}_0 (x) + \nabla_y \tmmathbf{m}_1 (x, y) |^2 \mathd y \right]
    \mathd x .
  \end{align*}
Since $\tmmathbf{m}_1\in \mathcal{D} [\Omega ; C^{\infty}_{\#} (Q, T_{\tmmathbf{m}_0} (\mathcal{M}))]$ is an arbitrary test function,
passing to the infimum we finish with the following upper and lower bound for
the manifold constrained homogenized functional:
\begin{equation}
  \mathcal{E}_{\RR^3} (\tmmathbf{m}_0) \leqslant \mathcal{E}_{\mathcal{M}}
  (\tmmathbf{m}_0) \leqslant \int_{\Omega} \Big[\inf_{\tmmathbf{\varphi} \in
  C_{\#}^{\infty} (Q_{}, T_s (\mathcal{M}))} \mathcal{I} [\xi,
  \tmmathbf{\varphi}]\Big]_{(s, \xi) \assign (\tmmathbf{m}_0 (x), \nabla
  \tmmathbf{m}_0 (x))} \mathd x, \label{eq:homheuristic}
\end{equation}
with
\begin{equation}
  \mathcal{I} [\xi, \tmmathbf{\varphi}] \assign \frac{1}{| Q |} \int_Q
  a_{\tmop{ex}} (y) | \xi + \nabla \tmmathbf{\varphi} (y) |^2 \mathd y.
\end{equation}
Since the functional $\mathcal{I} [\xi, \cdot] : C_{\#}^{\infty}
(Q_{}, T_s (\mathcal{M})) \rightarrow \RR^+$ is continuous with respect to the
$H_{\#}^1 (Q_{}, T_s (\mathcal{M}))$ norm and $C_{\#}^{\infty} (Q_{}, T_s
(\mathcal{M}))$ is dense in $H_{\#}^1 (Q_{}, T_s (\mathcal{M}))$, the infimum
in (\ref{eq:homheuristic}) can be taken over $H_{\#}^1 (Q_{}, T_s
(\mathcal{M}))$.

\begin{remark}
Quite remarkably, as we prove below, when $\mathcal{M} \assign \SSbb^2$ the
infimum appearing in the right-hand side of (\ref{eq:homheuristic}) does not
depend on the $s$-variable and coincides with the infimum in the left-hand
side of (\ref{eq:homheuristic}). Thus for $\mathcal{M} \assign \SSbb^2$ also
the lower bound to $\mathcal{E}_{\mathcal{M}}$ is sharp and
$\mathcal{E}_{\RR^3} \equiv \mathcal{E}_{\SSbb^2}$.
\end{remark}

\subsection{The tangentially homogenized Exchange Energy $\mathcal{E}_{\hom}$}

Let us go back to the application of the tangentially homogenization result in
our setting. We consider the family of exchange energy functionals, all
defined in $H^1 ( \Omega, \SSbb^2 )$, given by
$(\mathcal{E}_{\varepsilon})_{\varepsilon \in \RR^+}$. Since
$\tmmathbf{[H_1]}$ holds, Proposition \ref{prop:baba} ensures that the family
$(\mathcal{E}_{\varepsilon})_{\varepsilon \in \RR^+}$ $\Gamma$-converges in
the metric space $\left( H^1 ( \Omega ; \SSbb^2 ), d_{L^2 \left(
\Omega, \SSbb^2 \right)} \right)$, i.e$.$ with respect to the topology induces
on $H^1 ( \Omega, \SSbb^2)$ by the strong $L^2 ( \Omega,
\RR^3 )$ topology, to the functional
\begin{equation}
  \mathcal{E}_{\hom} : H^1 \left( \Omega, \SSbb^2 \right) \rightarrow \RR^+
  \hspace{1em}, \hspace{1em} \tmmathbf{m} \mapsto \mathcal{E}_{\hom}
  (\tmmathbf{m}) = \int_{\Omega} T_{} g_{\hom} (\tmmathbf{m}, \nabla
  \tmmathbf{m}) \mathd \tau,
\end{equation}
where for every $s \in \SSbb^2$ and every $\xi \in [ T_s ( \SSbb^2
) ]^3$,
\begin{equation}
  T_{} g_{\hom} (s, \xi) = \lim_{t \rightarrow \infty} \left[
  \inf_{\tmmathbf{\varphi} \in W_0^{1, \infty} \left( Q_t, T_s \left( \SSbb^2
  \right) \right)} \mathcal{I}_t [\xi, \tmmathbf{\varphi}] \right]
\end{equation}
with
\[
\mathcal{I}_t [\xi,
  \tmmathbf{\varphi}] \assign \frac{1}{| Q_t |} \int_{Q_t} a_{\tmop{ex}} (y) |
  \xi + \nabla \tmmathbf{\varphi} (y) |^2 \,\mathd y.
\]
Equivalently, since $\mathcal{I}_t [\xi, \cdot] : H^1_0 ( Q_t, T_s (
\SSbb^2 ) ) \rightarrow \RR^+$ is a continuous functional, and the
subspace $W_0^{1, \infty} ( Q_t, T_s ( \SSbb^2) )$ is
dense in $H_0^1 ( Q_t, T_s ( \SSbb^2 ) )$, we have for
every $s \in \SSbb^2$ and every $\xi \in[ T_s( \SSbb^2)]^3$
\begin{equation}
  \inf_{\tmmathbf{\varphi} \in W_0^{1, \infty} \left( Q_t, T_s \left( \SSbb^2
  \right) \right)} \mathcal{I}_t [\xi, \tmmathbf{\varphi}] =
  \inf_{\tmmathbf{\varphi} \in H_0^1 \left( Q_t, T_s \left( \SSbb^2 \right)
  \right)} \mathcal{I}_t [\xi, \tmmathbf{\varphi}],
\end{equation}
and hence
\begin{equation}
  T_{} g_{\hom} (s, \xi) = \lim_{t \rightarrow \infty} \left[
  \inf_{\tmmathbf{\varphi} \in H_0^1 \left( Q_t, T_s \left( \SSbb^2 \right)
  \right)} \mathcal{I}_t [\xi, \tmmathbf{\varphi}] \right] .
  \label{eq:tangentialhomogenized}
\end{equation}
This latter formulation is a more convenient one. Indeed, it can be easily
verified, by  {\tmname{Lax-Milgram}} theorem, that for every $t \in
\NN$, every $s \in \SSbb^2$ and every $\xi \in [ T_s ( \SSbb^2
) ]^3$, there exists a unique solution $\tmmathbf{\varphi}_t (s,
\xi) \in H_0^1 ( Q_t, T_s ( \SSbb^2 ))$ of the problem
\begin{equation}
  \tmmathbf{\varphi}_t (s, \xi) \assign \underset{\tmmathbf{\varphi} \in H_0^1
  \left( Q_t, T_s \left( \SSbb^2 \right) \right)}{\tmop{argmin}_{}}
  \mathcal{I}_t [\xi, \tmmathbf{\varphi}] . \label{eq:homoggenej}
\end{equation}
Moreover, for every $t \in \NN$ and every $s \in \SSbb^2$, the function 
\begin{equation}
\xi
\in [ T_s ( \SSbb^2 )]^3 \mapsto \tmmathbf{\varphi}_t
(s, \xi) \in W_0^{1, \infty} ( Q_t, T_s ( \SSbb^2 ) ),
\end{equation}
is a (continuous) linear map.

Let us now consider the problem that defines the {\tmstrong{classical
homogenization problem}} (see
{\cite{Braides1998,marcellini1978periodic,muller1987homogenization}}), namely
\begin{equation}
  _{} g_{\hom} (\xi) \assign \lim_{t \rightarrow \infty} \left[
  \inf_{\tmmathbf{\varphi} \in H_0^1 \left( Q_t, \RR^3 \right)} \mathcal{I}_t
  [\xi, \tmmathbf{\varphi}] \right] . \label{eq:homoggene}
\end{equation}
Our aim is now to prove that the natural extension of $g_{\hom}$ to the tangent
bundle $[ \mathcal{T} ( \SSbb^2 ) ]^3 \assign \sqcup_{s
\in \SSbb^2} [ T_s ( \SSbb^2 ) ]^3$ coincides with the
tangentially homogenized energy density $T_{} g_{\hom} (s, \xi)$. To this end
we observe that in the {\guillemotleft}classical{\guillemotright} problem
(\ref{eq:homoggene}), the function space among which the minimization takes place
is bigger than the one involved in the original problem
(\ref{eq:tangentialhomogenized}), so that $g_{\hom} (\xi) \leqslant T_{}
g_{\hom} (s, \xi)$ for every $(s, \xi) \in [ \mathcal{T} ( \SSbb^2
) ]^3$.

To prove that $g_{\hom} (\xi) \equiv T_{} g_{\hom} (s, \xi)$ on $[
\mathcal{T} ( \SSbb^2) ]^3$ it is thus sufficient to show
that for every $t \in \NN$ and for every $(s, \xi) \in [ \mathcal{T}
( \SSbb^2 ) ]^3$ there exists a function $\tmmathbf{\psi}_{t,
\xi} \in H_0^1 ( Q_t, T_s ( \SSbb^2 ))$ such that
$\mathcal{I}_t [\xi, \tmmathbf{\psi}_{t, \xi}] \leqslant g_{\hom} (\xi)$.
Having this goal in mind, we observe that if $\tmmathbf{\varphi}_{t, \xi}$
is the unique solution of the minimization problem arising in
(\ref{eq:homoggene}), denoting by $\tmmathbf{\psi}_{t, \xi} \assign
\tmmathbf{\varphi}_{t, \xi} - (\tmmathbf{\varphi}_{t, \xi} \cdot s) s$ the
nearest point projection of $\tmmathbf{\varphi}_{t, \xi}$ on $[ T_s
( \SSbb^2 ) ]^3$, one has $\tmmathbf{\psi}_{t, \xi} \in H_0^1
( Q_t, T_s ( \SSbb^2 ) )$ and
  \begin{align*}
    | \xi + \nabla \tmmathbf{\varphi}_{t, \xi} |^2 & =  | \xi + \nabla
    \tmmathbf{\psi}_{t, \xi} |^2 + | s \otimes \nabla (\tmmathbf{\varphi}_{t,
    \xi} \cdot s) |^2 + 2 (\xi + \nabla \tmmathbf{\psi}_{t, \xi}) : s \otimes
    \nabla (\tmmathbf{\varphi}_{t, \xi} \cdot s)\\
    & = | \xi + \nabla \tmmathbf{\psi}_{t, \xi} |^2 + | \nabla
    (\tmmathbf{\varphi}_{t, \xi} \cdot s) |^2 + 2 (\xi + \nabla
    \tmmathbf{\psi}_{t, \xi}) \nabla (\tmmathbf{\varphi}_{t, \xi} \cdot s)
    \cdot s\\
    & = | \xi + \nabla \tmmathbf{\psi}_{t, \xi} |^2 + | \nabla
    (\tmmathbf{\varphi}_{t, \xi} \cdot s) |^2
  \end{align*}
where, in deriving the previous equalities, we have used the relations $| a
\otimes b |^2 = | a |^2 | b |^2$, $(A : B (a \otimes b)) = A_{} b \cdot B_{}
a$, and the orthogonality relations $[\nabla \tmmathbf{\psi}_{t, \xi}]^T s =
0$, $\xi^T s = 0$. Thus we have
  \begin{align*}
    \mathcal{I}_t [\xi, \tmmathbf{\psi}_{t, \xi}] & =  \frac{1}{| Q_t |}
    \int_{Q_t} a_{\tmop{ex}} (y) | \xi + \nabla \tmmathbf{\varphi}_{t, \xi}
    (y) |^2 \mathd y \\
		& \qquad \qquad - \frac{1}{| Q_t |} \int_{Q_t} a_{\tmop{ex}} (y) | \nabla
    (\tmmathbf{\varphi}_{t, \xi} \cdot s) (y) |^2 \mathd y\\
    & =  \mathcal{I}_t [\xi, \tmmathbf{\varphi}_{t, \xi}] - \frac{1}{| Q_t
    |} \int_{Q_t} a_{\tmop{ex}} (y) | \nabla (\tmmathbf{\varphi}_{t, \xi}
    \cdot s) (y) |^2 \mathd y\\
    & \leqslant  \mathcal{I}_t [\xi, \tmmathbf{\varphi}_{t, \xi}],
  \end{align*}
and therefore, passing to the limit for $t \rightarrow \infty$ we get
$g_{\hom} (\xi) \equiv T_{} g_{\hom} (s, \xi)$. In particular $T_{} g_{\hom}
(s, \xi)$ does not depend on $s$, and is given by (\ref{eq:homoggene}).

\begin{remark}
  The fact that tangential homogenization energy density $T_{} g_{\hom} (s,
  \xi)$ reduces to the {\guillemotleft}classical{\guillemotright} one (i.e.
  $g_{\hom}$), which does not depend from the $s$-variable, is a quite
  remarkable fact. Indeed, it is possible to build elementary examples where
  the dependence on the $s$-variable in the tangential homogenization energy
  density is explicit (cfr. \tmtextup{{\cite{Babadjian2009}}}). The
  independence from the $s$-variable in our framework,  is mainly due to the
  very particular situation that for every $y \in \RR^3$, the
  Carath{\'e}odory function $g (y, \cdot) = a_{\tmop{ex}} (y) | \cdot |^2$ is
  invariant under the rotation group of the manifold under consideration (the
  2-sphere $\SSbb^2$).
\end{remark}

To complete the proof concerning the exchange energy part stated in
Theorem \ref{thm:mainresult}, it is sufficient to recall that
$g_{\hom}$ is a quadratic form in $\xi$ (see
{\cite{Braides1998,marcellini1978periodic}}), i.e. that there exists a
symmetric and positive definite matrix $A_{\hom} \in \RR^{3 \times 3}$ such
that for every $\xi \in \RR^{3 \times 3}$
\begin{equation}
  g_{\hom} (\xi) = A_{\hom} \xi : \xi . \tmcolor{red}{\label{eq:ghom}}
\end{equation}
Precisely, it is possible to show that the
{\guillemotleft}classical{\guillemotright} problem (\ref{eq:homoggene}) may be
equivalently reformulated by replacing homogeneous boundary conditions by
periodic boundary conditions (see {\cite{muller1987homogenization}} and
Appendix \ref{sec:appendix}), so that for every $(s, \xi) \in [
\mathcal{T} ( \SSbb^2) ]^3$
\begin{equation}
  T_{} g_{\hom} (s, \xi) = g_{\hom} (\xi) = \lim_{t \rightarrow \infty} \left[
  \inf_{\tmmathbf{\varphi} \in H_{\#}^1 \left( Q_t, \RR^3 \right)}
  \mathcal{I}_t [\xi, \tmmathbf{\varphi}] \right] .
\end{equation}
Furthermore, from the convexity of the integrand, it is routinely seen that we
can replace the limit for $t \rightarrow \infty$ by the computation on the
unit cell (though still with periodic boundary conditions). We are therefore
left (see {\cite{Braides1998}} and Appendix \ref{sec:appendix}) for every $(s,
\xi) \in [ \mathcal{T} ( \SSbb^2 ) ]^3$ with
\begin{equation}
  T_{} g_{\hom} (s, \xi) = g_{\hom} (\xi) = \inf_{\tmmathbf{\varphi} \in
  H_{\#}^1 \left( Q_{}, \RR^3 \right)} \int_{Q_{}} a_{\tmop{ex}} (y) | \xi +
  \nabla \tmmathbf{\varphi} (y) |^2 \mathd y . \label{eq:homogtang}
\end{equation}
Now, {\tmname{Lax-Milgram}} theorem shows that for every $\xi \in [ T_s
( \SSbb^2 ) ]^3$ there exists a unique solution
$\tmmathbf{\varphi}_{\xi}$ of this latter problem, up to an additive constant
that we may fix by requiring that $\langle \tmmathbf{\varphi}_{\xi} (y)
\rangle_Q =\tmmathbf{0}$. Moreover, the map $\xi \in [ T_s ( \SSbb^2
) ]^3 \mapsto \tmmathbf{\varphi}_{\xi} \in H_{\#}^1 ( Q,
\RR^3 )$ is a (continuous) \tmem{linear} map, and a direct computation, shows
that $A_{\hom}$ can be expressed as:
\begin{equation}
  A_{\hom} \assign \langle a_{\tmop{ex}} (y) (I + \nabla \tmmathbf{\varphi}
  (y))^T (I + \nabla \tmmathbf{\varphi} (y))  \rangle_Q \;,\;
   \tmmathbf{\varphi} \assign (\varphi_1, \varphi_2,
  \varphi_3)
\end{equation}
where for every $j \in \NN_3$ the component $\varphi_j$ is the unique (up
to a constant) solution of the {\tmem{scalar unit cell problem}}
\begin{equation}
  \varphi_j \assign \underset{\varphi \in W_{\#}^{1, \infty} \left( Q,
  \RR^3 \right)}{\tmop{argmin}}_{} \int_Q a_{\tmop{ex}} (y) [ | \nabla \varphi
  (y) + e_j |^2 ] \mathd x .
\end{equation}
\begin{remark}
  By repeating the same argument followed above to deduce the equality $T_{}
  g_{\hom} (s, \xi) = g_{\hom} (\xi)$, it is now simple to show that the
  tangentially homogenized exchange energy density can be equivalently
  characterized by the position
  \[ T_{} g_{\hom} (s, \xi) =_{} \inf_{\tmmathbf{\varphi} \in H_{\#}^1 \left(
     Q_{}, T_s \left( \SSbb^2 \right) \right)} \int_Q a_{\tmop{ex}} (y) | \xi
     + \nabla \tmmathbf{\varphi} (y) |^2 \mathd y, \]
  which is exactly the one deduced in equation (\ref{eq:homheuristic}). 
\end{remark}

\section{The periodic homogenization of the demagnetizing
field}\label{subsec:proof3}

This Section is devoted to show that the family of magnetostatic self-energies
$(\mathcal{W}_{\varepsilon})_{\varepsilon \in \RR^+}$ continuously converges
to $\mathcal{W}_{\hom}$. To this end, let us first recall some essential facts
concerning the demagnetizing field operator.

\subsection{The {\tmperson{Beppo-Levi}} space and the variational formulation
for the demagnetizing field}

From the mathematical point of view, assuming $\Omega$ to be open, bounded and
with a Lipschitz boundary, a given magnetization $\tmmathbf{m} \in L^2 (
\Omega, \RR^3 )$ generates the stray field $\tmmathbf{h}_{\mathd}
[\tmmathbf{m}] = \nabla u_{\tmmathbf{m}}$ where the potential
$u_{\tmmathbf{m}}$ solves:
\begin{equation}
  \Delta u_{\tmmathbf{m}} = - \tmop{div} (\tmmathbf{m} \chi_{\Omega}) \text{ \
  in \ } \mathcal{D}^{\prime} \left( \RR^3 \right) . \label{eq:potentialnew}
\end{equation}
In (\ref{eq:potentialnew}) we have denoted by $\tmmathbf{m} \chi_{\Omega}$
the extension of $\tmmathbf{m}$ to $\RR^3$ that vanishes outside $\Omega$. 

Once introduced
the weight $\omega (x) = (1 + | x |^2)^{- 1 / 2}$, and the weighted Lebesgue space
\[
L^2_\omega (\Omega) = \left\{ u \in \mathcal{D}^{\prime} (\Omega) \; : \; u
  \omega \in L^2 (\Omega)  \right\},\] 
	we define the 
 {\tmname{Beppo-Levi}} space
\begin{equation}
  BL^1 (\Omega) = \left\{ u \in \mathcal{D}^{\prime} (\Omega) \; : \; u
  \in L^2_\omega (\Omega) \text{ \ and \ } \nabla u \in L^2 \left( \Omega,
  \RR^3 \right) \right\},
\end{equation}
which is an Hilbert space when endowed with the scalar product $(u,v)_{B_{} L^1 ( \RR^3)} :=(\nabla u,\nabla v)_\Omega$. It is straightforward to show, by the means of {\tmname{Lax-Milgram}} theorem, the existence and uniqueness of 
the solution of the variational formulation associated to equation \eqref{eq:potentialnew}: namely to find a potential $u_{\tmmathbf{m}} \in B L^1( \RR^3 )$ such that for all $\varphi \in BL^1 ( \RR^3)$
\begin{equation}
  (u_{\tmmathbf{m}}, \varphi)_{B_{} L^1 \left( \RR^3 \right)} \assign
  \int_{\RR^3} \nabla u_{\tmmathbf{m}} \cdot \nabla \varphi \mathd \tau = -
  \int_{\RR^3} \tmmathbf{m} \cdot \nabla \varphi \mathd \tau = :
  F_{\tmmathbf{m}} [\varphi] . \label{eq:variationalformulationpotential2}
\end{equation}

Thus, for every $\tmmathbf{m} \in L^2 (\RR^3)$ there exists a unique 
$u_{\tmmathbf{m}} \in BL^1(\RR^3 )$ such that (\ref{eq:variationalformulationpotential2}) holds,
and moreover, the following stability estimate holds:
\begin{equation}
  \| u_{\tmmathbf{m}} \|_{B_{} L^1 \left( \RR^3 \right)} \equiv \| \nabla
  u_{\tmmathbf{m}} \|^2_{\Omega} \leqslant_{} \sup_{\underset{\| \nabla
  \varphi \|_{\Omega} = 1}{\varphi \in B_{} L^1 \left( \RR^3 \right)}} |
  F_{\tmmathbf{m}} [\varphi] | \leqslant \| \tmmathbf{m} \|_{L^2 \left( \RR^3
  \right)} ._{} \label{eq:stabilitylaxmilgram}
\end{equation}
The quantity $\mathbf{h}_d [\tmmathbf{m}] \assign \nabla u_{\tmmathbf{m}}$ is
what is referred to as the demagnetizing field, and it is a linear and
continuous operator from $L^2 ( \RR^3, \RR^3 )$ into $L^2 (
\RR^3, \RR^3 )$. In particular $\tmmathbf{m} \chi_{\Omega} \in L^2
( \RR^3 )$ for every $\tmmathbf{m} \in L^2 (\Omega)$ and therefore
$\mathbf{h}_d$ is also a linear and continuous operator from $L^2 (
\Omega, \RR^3 )$ into $L^2 ( \RR^3, \RR^3 )$.

\subsection{The two-scale limit of the demagnetizing field}

In this subsection we make use of the notion of two-scale convergence, to
characterize the behavior of the demagnetizing field operator under two-scale
convergence. More precisely, we suppose to have a bounded sequence
$(\tmmathbf{m}_{\varepsilon}) \in L^2 ( \RR^3 )$ which two scale
converges to some $\tmmathbf{m}_{\infty} (x, y) \in L^2$ and we want to
understand if the two-scale limit of the sequence $\mathbf{h}_d
[\tmmathbf{m}_{\varepsilon}]$ exists, and in the affirmative case to
characterize in some analytic sense such a limit. This problem has already been treated in \cite{Santugini2007} although without 
justifying the use of two-scale compactness results in weighted space.    That is why in this subsection
we start by proving two compactness results concerning two-scale convergence in the weighted spaces
$L^2_{\omega} \left( \RR^3 \right)$ and $B_{} L^1 ( \RR^3 )$. 

The first one is a {\guillemotleft}weighted{\guillemotright} variant of
the compactness result stated in Proposition \ref{prop:compactnessl2ts}, and shows that a notion of two-scale convergence in $L^2_{\omega} ( \RR^3 )$ makes sense.

\begin{proposition} \label{prop:compactl2w}
  Let $(u_{\varepsilon})$ be bounded sequence in $L^2_{\omega}( \RR^3
  )$. There exists a function $u \in L^2_{\tmop{loc}}( \RR^3
  \times Q )$ such that $\langle u \rangle_Q \in L^2_{\omega}(
  \RR^3 )$ and, up to the extraction of a subsequence,
  \begin{equation}
    \lim_{\varepsilon \rightarrow 0} \int_{\RR^3} u_{\varepsilon} (x) \varphi
    (x, x / \varepsilon_{}) \mathd x = \int_{\RR^3 \times Q} u (x, y) \varphi
    (x, y) \mathd y \mathd x
  \end{equation}
	for all $ \varphi \in \mathcal{D}[\RR^3; C^{\infty}_{\#} (Q)]$.
  In this case we say that the sequence $(u_{\varepsilon})$
  $L^2_{\omega}$-two-scale converges to $u$.
\end{proposition}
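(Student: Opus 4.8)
The plan is to reduce the statement to the unweighted two-scale compactness result of Proposition~\ref{prop:compactnessl2ts} by factoring out the fixed weight. Set $v_{\varepsilon} \assign \omega\, u_{\varepsilon}$. Since $(u_\varepsilon)$ is bounded in $L^2_\omega(\RR^3)$, the sequence $(v_\varepsilon)$ is bounded in $L^2(\RR^3)$, so by Proposition~\ref{prop:compactnessl2ts} there exists $v \in L^2(\RR^3 \times Q)$ with, up to a subsequence, $v_\varepsilon \twoheadrightarrow v$. I then take as candidate limit the function $u \assign v/\omega$, i.e. $u(x,y) \assign \sqrt{1+|x|^2}\, v(x,y)$.

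First I would check that $u$ has the asserted integrability. On every bounded set $K \subset \RR^3$ the weight $\omega^{-1}$ is bounded, hence $\int_{K\times Q} |u|^2 \,\mathd y\,\mathd x \leqslant (\sup_K \omega^{-2})\, \|v\|_{L^2(\RR^3\times Q)}^2 < \infty$, so $u \in L^2_{\tmop{loc}}(\RR^3\times Q)$. For the mean, Jensen's inequality (recalling $|Q|=1$) gives $\int_{\RR^3} |\langle u \rangle_Q (x)|^2\, \omega(x)^2 \,\mathd x = \int_{\RR^3} |\langle v\rangle_Q(x)|^2\,\mathd x \leqslant \int_{\RR^3\times Q} |v(x,y)|^2\,\mathd y\,\mathd x < \infty$, so $\langle u\rangle_Q \in L^2_\omega(\RR^3)$, as required.

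Next, to verify the convergence of the oscillating integrals, I would, given $\varphi \in \mathcal{D}[\RR^3;C^{\infty}_{\#}(Q)]$, introduce the modified test function $\psi(x,y) \assign \varphi(x,y)/\omega(x) = \sqrt{1+|x|^2}\,\varphi(x,y)$. Since $x \mapsto \sqrt{1+|x|^2}$ is smooth with all derivatives bounded on the compact $x$-support of $\varphi$, one has $\psi \in \mathcal{D}[\RR^3;C^{\infty}_{\#}(Q)]$, hence $\psi$ is admissible in the definition of two-scale convergence for the sequence $(v_\varepsilon)$. Since $u_\varepsilon(x)\,\varphi(x,x/\varepsilon) = v_\varepsilon(x)\,\psi(x,x/\varepsilon)$ identically, passing to the limit through $v_\varepsilon \twoheadrightarrow v$ yields $\int_{\RR^3} u_\varepsilon(x)\,\varphi(x,x/\varepsilon)\,\mathd x \to \int_{\RR^3\times Q} v(x,y)\,\psi(x,y)\,\mathd y\,\mathd x = \int_{\RR^3\times Q} u(x,y)\,\varphi(x,y)\,\mathd y\,\mathd x$, which is exactly the asserted identity.

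There is essentially no serious obstacle: the whole content is the observation that multiplication by the fixed, smooth, strictly positive, locally bounded weight $\omega^{-1}$ preserves the admissible class of oscillating test functions and maps $L^2$ (resp. $L^2$-integrability of $Q$-means) to $L^2_\omega$ (resp. $L^2_\omega$-integrability of $Q$-means). The only step that deserves an explicit line is the stability of the test-function class under multiplication by $\sqrt{1+|x|^2}$, which follows at once from the compactness of the $x$-support of $\varphi$; everything else is a transcription of Proposition~\ref{prop:compactnessl2ts}.
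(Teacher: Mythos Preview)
Your argument is correct and takes a genuinely different route from the paper. The paper proceeds by covering $\RR^3$ with a countable family of bounded domains $(\Omega_i)_{i\in\NN}$, applying the classical two-scale compactness (Proposition~\ref{prop:compactnessl2ts}) on each $\Omega_i$ separately, and then running a diagonal extraction to produce a single subsequence that two-scale converges on every $\Omega_i$; the local limits are glued together via uniqueness of the two-scale limit on overlaps, and the fact that $\langle u\rangle_Q\in L^2_\omega(\RR^3)$ is read off from the weak $L^2_\omega$ limit of the original sequence. Your approach instead absorbs the weight once and for all by passing to $v_\varepsilon=\omega\,u_\varepsilon$ and invoking Proposition~\ref{prop:compactnessl2ts} directly on $\RR^3$, which avoids the diagonal argument entirely. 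What makes this work is precisely that $\omega^{-1}(x)=\sqrt{1+|x|^2}$ is smooth and strictly positive, so multiplication by it preserves the test class $\mathcal{D}[\RR^3;C^\infty_\#(Q)]$; the paper's diagonal route, by contrast, never uses the smoothness of $\omega$ and would go through for any weight that is locally bounded above and below. Your version is shorter and more transparent for this particular weight; the paper's is more robust but heavier. One small point worth making explicit in your write-up is that Proposition~\ref{prop:compactnessl2ts} is being applied with $\Omega=\RR^3$, which is legitimate (the two-scale compactness theorem holds on arbitrary open sets, cf.~\cite{Allaire1992,lukkassen2002two}) but is tacit in your formulation.
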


\begin{proof}
  Since $(u_{\varepsilon})$ is bounded in the Hilbert space $L^2_{\omega} (
  \RR^3)$, there exists an element $u_{\infty} \in L^2_{\omega} (
  \RR^3 )$ and a sequence $(u_{\varepsilon (n)}) \subseteq
  (u_{\varepsilon})$ such that
  \begin{equation}
    u_{\varepsilon (n)} \rightharpoonup u_{\infty} \hspace{1em} \text{weakly
    in $L^2_{\omega} ( \RR^3 )$} .
  \end{equation}
  This implies that for every bounded domain $\Omega \subseteq \RR^3$, one has
  $u_{\varepsilon (n)} \rightharpoonup u_{\infty}$ weakly in $L^2 (\Omega)$.
  We now consider a sequence of bounded domain $(\Omega_i)_{i \in \NN}$
  covering $\RR^3$. Let us start with the index $i = 1$, i.e$.$ with
  $\Omega_1$. According to the two-scale compactness result (see Proposition
  \ref{prop:compactnessl2ts}), there exists a subsequence $u_{\varepsilon
  (n_{k_1})}$ and an element $u_1 \in L^2 (\Omega_1 \times Q)$ such that
  \begin{equation}
    u_{\varepsilon (n_{k_1})} \twoheadrightarrow u_1 \text{ \ in } L^2
    (\Omega_1 \times Q) .
  \end{equation}
  Now we consider $i = 2$, i.e$.$ $\Omega_2$. Since $u_{\varepsilon (n_{k_1})}
  \rightharpoonup u_{\infty}$ weakly in $L^2_{\omega}( \RR^3 )$,
  it is possible to extract a further subsequence $( u_{\varepsilon
  (n_{k_2})})$ from $u_{\varepsilon (n_{k_1})}$ such that
  $u_{\varepsilon (n_{k_2})} \twoheadrightarrow u_2$ in $L^2 (\Omega_2 \times
  Q)$ for some suitable $u_2 \in L^2 (\Omega_2 \times Q)$. Moreover, due to
  the unicity of the two-scale limit, one has
  \begin{equation}
    u_{1 | (\Omega_1 \cap \Omega_2) \times Q \nobracket} \equiv u_{2 |
    (\Omega_1 \cap \Omega_2) \times Q \nobracket}
  \end{equation}
  whenever $\Omega_1 \cap \Omega_2 \neq \emptyset$. Proceeding in this way, we
  find for every $i \in \NN$ a subsequence $u_{\varepsilon (n_{k_i})}$ such
  that
  \begin{equation}
    n_{k_i} \subseteq n_{k_{i - 1}} \hspace{2em} \text{{{and}}}
    \hspace{2em} u_{\varepsilon (n_{k_i})} \twoheadrightarrow u_i
    \label{eq:seqindices1}
  \end{equation}
  for some $u_i \in L^2 (\Omega_i \times Q)$. We then define the
  {\tmstrong{diagonal sequence}} of indices defined by
  \begin{equation}
    n_{k_{\infty} (1)} \assign n_{k_1 (1)}, n_{k_{\infty} (2)} \assign n_{k_2
    (2)}, \ldots, n_{k_{\infty} (i)} = n_{k_i (i)}, \ldots .
  \end{equation}
  From (\ref{eq:seqindices1}) we get that for every $i \in \NN$, up to the
  first $i - 1$ terms, the sequence of indices $n_{k_{\infty}}$ is included in
  $n_{k_i}$, and this means that for every $i \in \NN$
  \begin{equation}
    u_{\varepsilon (n_{k_{\infty}})} \twoheadrightarrow u_i \text{ \ in } L^2
    (\Omega_i \times Q) .
  \end{equation}
  By observing again that $u_{i | (\Omega_i \cap \Omega_j) \times Q
  \nobracket} \equiv u_{j | (\Omega_i \cap \Omega_j) \times Q \nobracket}$ if
  $\Omega_i \cap \Omega_j \neq \emptyset$, from the {\guillemotleft}{\tmem{principe du
  recollement des morceaux{\guillemotright}}} (cfr$.$
  {\cite{schwartz1957theorie}}) there exists a unique distribution $u \in
  L^2_{\tmop{loc}} ( \RR^3 \times Q )$ such that $u_{| \Omega_i
  \nobracket \times Q} \equiv u_i$, and therefore
  \begin{equation}
    \lim_{k_{\infty} \rightarrow \infty} \int_{\RR^3} u_{\varepsilon
    (n_{k_{\infty}})} (x) \varphi (x, x / \varepsilon) \mathd x = \int_{\RR^3
    \times Q} u (x, y) \varphi (x, y) \mathd y \mathd x
  \end{equation}
  for every $\varphi \in \mathcal{D}[ \RR^3; C^{\infty}_{\#} (Q)
  ]$. Moreover since
  \begin{equation}
    u_{\varepsilon (n_{k_{\infty}})} \rightharpoonup u_{\infty} \hspace{1em}
    \text{in $L^2_{\omega} ( \RR^3)$}
	\end{equation}
		and
		\begin{equation}
		\forall i \in \NN \hspace{1em}
    u_{\varepsilon (n_{k_{\infty}})} \rightharpoonup \langle u (x, y)
    \rangle_Q \hspace{1em} \text{in $L^2 (\Omega_i)$}
  \end{equation}
  we get also $\langle u (x, y) \rangle_Q \equiv u_{\infty} \in L^2_{\omega}
 ( \RR^3 )$. This completes the proof.
\end{proof}

Exactly with the same diagonal argument, it is possible to prove the
weighted variant of the compactness result stated in Proposition
\ref{prop:compactnessSts}.

\begin{proposition}
  \label{prop:extcompactStsc}Let $(u_{\varepsilon})$ be bounded sequence in
  $BL^1 ( \RR^3 )$ weakly convergent to $u_{\infty}$. Then
  $u_{\varepsilon}$ $L^2_{\omega}$-two-scale converges to $u_{\infty} \in
  L^2_{\omega} ( \RR^3 )$ and there exists a function $v \in L^2[ \RR^3 ; H^1_{\#} (Q) / \RR]$ such that, up to the extraction of a subsequence:
  \begin{equation}
    \nabla u_{\varepsilon} \twoheadrightarrow \nabla u_{\infty} + \nabla_y v.
  \end{equation}
\end{proposition}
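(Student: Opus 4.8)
The plan is to transcribe, almost word for word, the diagonal-exhaustion argument in the proof of Proposition~\ref{prop:compactl2w}, using the gradient two-scale compactness result (Proposition~\ref{prop:compactnessSts}) on each piece of an exhaustion of $\RR^3$ in place of the plain $L^2$ one, and then to reassemble the local correctors by the \emph{principe du recollement des morceaux}. First I would record the preliminary reductions. By hypothesis $u_\varepsilon\rightharpoonup u_\infty$ in the Hilbert space $BL^1(\RR^3)$, hence $\nabla u_\varepsilon\rightharpoonup\nabla u_\infty$ weakly in $L^2(\RR^3,\RR^3)$ and, through the continuous embedding $BL^1(\RR^3)\hookrightarrow L^2_\omega(\RR^3)$ together with the weak continuity of bounded linear maps, $u_\varepsilon\rightharpoonup u_\infty$ weakly in $L^2_\omega(\RR^3)$; in particular $(\nabla u_\varepsilon)$ is bounded in $L^2(\RR^3,\RR^3)$ and $(u_\varepsilon)$ is bounded in $L^2_\omega(\RR^3)$. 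Fix bounded open sets $\Omega_i\uparrow\RR^3$ (open balls of radius $i$): on each $\Omega_i$ the weight $\omega$ is bounded below by a positive constant, so $(u_\varepsilon|_{\Omega_i})$ is bounded in $H^1(\Omega_i)$ and $u_\varepsilon|_{\Omega_i}\rightharpoonup u_\infty|_{\Omega_i}$ weakly in $H^1(\Omega_i)$, in particular $u_\infty|_{\Omega_i}\in H^1(\Omega_i)$.

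Next I would run the diagonal extraction exactly as in Proposition~\ref{prop:compactl2w}. Applying Proposition~\ref{prop:compactnessSts} on $\Omega_1$ gives a subsequence and a $v_1\in L^2[\Omega_1;H^1_\#(Q)/\RR]$, which we normalize by $\langle v_1(x,\cdot)\rangle_Q=0$, with $u_\varepsilon\twoheadrightarrow u_\infty$ and $\nabla u_\varepsilon\twoheadrightarrow\nabla u_\infty+\nabla_y v_1$ on $\Omega_1\times Q$. Iterating over $i$ produces nested subsequences $n_{k_i}\subseteq n_{k_{i-1}}$ and functions $v_i\in L^2[\Omega_i;H^1_\#(Q)/\RR]$ with zero $y$-average such that $\nabla u_{\varepsilon(n_{k_i})}\twoheadrightarrow\nabla u_\infty+\nabla_y v_i$ on $\Omega_i\times Q$. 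By uniqueness of the two-scale limit, $\nabla_y v_i=\nabla_y v_j$ a.e.\ on $(\Omega_i\cap\Omega_j)\times Q$, and, both having zero $y$-average, $v_i=v_j$ there; the \emph{principe du recollement des morceaux} then furnishes a unique $v\in L^2_{\tmop{loc}}[\RR^3;H^1_\#(Q)/\RR]$ with $v|_{\Omega_i\times Q}=v_i$. Along the diagonal sequence $n_{k_\infty}$ (defined as in Proposition~\ref{prop:compactl2w}), any $\varphi\in\mathcal{D}[\RR^3;C^\infty_\#(Q)]^3$ has $x$-support inside some $\Omega_i$, so $\int_{\RR^3}\nabla u_{\varepsilon(n_{k_\infty})}(x)\cdot\varphi(x,x/\varepsilon)\,\mathd x\to\int_{\RR^3\times Q}(\nabla u_\infty+\nabla_y v)\cdot\varphi\,\mathd y\,\mathd x$, i.e.\ $\nabla u_\varepsilon\twoheadrightarrow\nabla u_\infty+\nabla_y v$; the same localization applied to the $u$-part of Proposition~\ref{prop:compactnessSts} on each $\Omega_i$ shows that the $L^2_\omega$-two-scale limit of $u_\varepsilon$ is $u_\infty$, which is $y$-independent as asserted.

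It remains to upgrade $v$ from $L^2_{\tmop{loc}}$ to $L^2[\RR^3;H^1_\#(Q)/\RR]$. Since $(\nabla u_\varepsilon)$ is bounded in $L^2(\RR^3,\RR^3)$, Proposition~\ref{prop:compactnessl2ts} (applied componentwise, after a further subsequence if needed) yields a two-scale limit $w\in L^2(\RR^3\times Q,\RR^3)$; restricting test functions to each $\Omega_i$ and invoking uniqueness identifies $w|_{\Omega_i\times Q}=\nabla u_\infty+\nabla_y v_i$, whence $w=\nabla u_\infty+\nabla_y v$ globally, so $\nabla_y v\in L^2(\RR^3\times Q)$. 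Applying the Poincar\'e--Wirtinger inequality on the torus $Q$ to the zero-$y$-mean representative then gives $v\in L^2(\RR^3\times Q)$, i.e.\ $v\in L^2[\RR^3;H^1_\#(Q)/\RR]$. The diagonal extraction and the gluing are otherwise a direct transcription of the proof of Proposition~\ref{prop:compactl2w}; I expect the only genuinely delicate point to be this last passage---reconciling the intrinsically local exhaustion argument with the global $L^2$-integrability demanded in the statement---together with the bookkeeping needed to keep the $H^1_\#(Q)/\RR$-valued correctors represented consistently across overlaps.
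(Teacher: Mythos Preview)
Your argument is correct, but it is organised a bit differently from the paper. For the $u$-part both proofs do the same thing: the diagonal exhaustion of Proposition~\ref{prop:compactl2w} combined with the local $H^1$ compactness identifies the $L^2_\omega$-two-scale limit as the $y$-independent function $u_\infty$. For the gradient part, however, the paper does \emph{not} run the diagonal argument with Proposition~\ref{prop:compactnessSts} on each $\Omega_i$ and then glue the local correctors $v_i$. Instead it uses directly that $(\nabla u_\varepsilon)$ is bounded in $L^2(\RR^3)$, applies the global $L^2$ two-scale compactness once to obtain a limit $\tmmathbf{\kappa}_\infty\in L^2(\RR^3\times Q)$, and then tests against functions of the form $\varphi(x)\tmmathbf{\psi}_\#(y)$ with $\tmop{div}_y\tmmathbf{\psi}_\#=0$ to show that $\tmmathbf{\kappa}_\infty-\nabla u_\infty$ is orthogonal to divergence-free periodic fields, hence equals $\nabla_y v$ for some $v\in L^2[\RR^3;H^1_\#(Q)/\RR]$. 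This yields the global integrability of $\nabla_y v$ in one stroke and avoids both the gluing of the $v_i$ and the separate Poincar\'e--Wirtinger upgrade. Your route trades that orthogonality computation for a black-box use of Proposition~\ref{prop:compactnessSts} on bounded pieces, at the price of the extra bookkeeping (consistent zero-mean representatives, recollement, and the final $L^2$ upgrade) that you correctly flagged as the delicate point.
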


\begin{proof}
  We start by observing that since $u_{\varepsilon (n)} \rightharpoonup
  u_{\infty}$ weakly in $B_{} L^1 ( \RR^3 )$, $u_{\varepsilon (n)}
  \rightharpoonup u_{\infty}$ in $L^2_{\omega} ( \RR^3 )$, and
  therefore, according to the previous proposition, there exists a function $u
  \in L^2_{\omega} ( \RR^3 \times Q )$ such that, up to a
  subsequence,
  \begin{equation}
    \begin{array}{ll}
      u_{\varepsilon (n)} \twoheadrightarrow u (x, y) & \text{in }
      L^2_{\omega} ( \RR^3 \times Q )\\
      u_{\varepsilon (n)} \rightharpoonup u_{\infty} (x) \equiv \langle u (x,
      y) \rangle_Q & \text{in $L^2_{\omega} (\RR^3 )$} .
    \end{array}
  \end{equation}
  We now consider a sequence of bounded domain $(\Omega_i)_{i \in \NN}$.
  Proceeding as in the proof of the previous Proposition \ref{prop:compactl2w}, one proves that for  every $i \in \NN$ there exists a subsequence
  $u_{\varepsilon (n_{k_i})}$ such that
  \begin{equation}
    n_{k_i} \subseteq n_{k_{i - 1}} \; \text{{{ and }}}
    \; u_{\varepsilon (n_{k_i})} \twoheadrightarrow u_{\infty} (x)
    \equiv \langle u (x, y) \rangle_Q \equiv u (x, y) \hspace{1em} \text{in
    $L^2 (\Omega_i \times Q)$} . \label{eq:seqindices1Sob}
  \end{equation}
  We then define the diagonal sequence of indices defined by the position $ n_{k_{\infty} (i)} = n_{k_i (i)}$.
  From (\ref{eq:seqindices1Sob}) we get that for every $i \in \NN$, up to the
  first $i - 1$ terms, the sequence of indices $n_{k_{\infty}}$ is included in
  $n_{k_i}$, and this means that for every $i \in \NN$
  \begin{equation}
    u_{\varepsilon (n_{k_{\infty}})} \twoheadrightarrow u_{\infty} (x) \equiv
    \langle u (x, y) \rangle_Q \equiv u (x, y) \hspace{1em} \text{in $L^2
    (\Omega_i \times Q)$} .
  \end{equation}
  Thus $u \equiv u_{\infty} \in L^2_{\omega} (\RR^3 )$ in $\RR^3$.
  
  Next, we observe that since $u_{\varepsilon (n)} \rightharpoonup u_{\infty}$
  weakly in $BL^1 ( \RR^3 )$ we have $\nabla u_{\varepsilon
  (n)} \rightharpoonup \nabla u_{\infty}$ and $(\nabla u_{\varepsilon (n)})$
  bounded in $L^2 ( \RR^3 )$. Thus, according to the classical
  two-scale compactness result (see Proposition \ref{prop:compactnessSts})
  there exists a function $\tmmathbf{\kappa}_{\infty} \in L^2 ( \RR^3
  \times Q )$ such that, up to a subsequence,
  \begin{equation}
    \nabla u_{\varepsilon} \twoheadrightarrow \tmmathbf{\kappa}_{\infty}
    \hspace{1em} \text{in } L^2 ( \RR^3, \RR^3) .
  \end{equation}
  Thus, for any test function $[\varphi \otimes \tmmathbf{\psi}_{\#}] (x, y)
  \assign \varphi (x) \tmmathbf{\psi}_{\#} (y) \in \mathcal{D} [ \RR^3 ;
  C^{\infty}_{\#} (Q) ]$ with $\tmop{div}_y \tmmathbf{\psi}_{\#} (y) =
  0$ one has
  \begin{equation}
    \int_{\RR^3} u_{\varepsilon} (x) \tmop{div}_x [\varphi \otimes
    \tmmathbf{\psi}_{\#}] (x, x / \varepsilon) \mathd x = - \int_{\RR^3}
    \nabla u_{\varepsilon} (x) \cdot [\varphi \otimes \tmmathbf{\psi}_{\#}]
    (x, x / \varepsilon) \mathd x .
  \end{equation}
  Passing to the two-scale convergence on both sides we get that for a.e$.$ $x \in \RR^3$ and for every $\tmmathbf{\psi}_{\#} \in C^{\infty}_{\#} (Q)$ such that
  $\tmop{div}_y \tmmathbf{\psi}_{\#} (y) = 0$ in $Q$. 
  \begin{equation}
    \int_Q [\tmmathbf{\kappa}_{\infty} (x, y) - \nabla u_{\infty} (x)] \cdot
    \tmmathbf{\psi}_{\#} (y) \mathd y = 0 .
  \end{equation}
	Since the orthogonal
  complement of the divergence-free functions is the space of gradients, for
  a.e$.$ $x \in \RR^3$ there exists a unique function $v (x, \cdot) \in
  H^1_{\#} (Q) / \RR$ such that $\nabla_y v (x, y) \equiv
  \tmmathbf{\kappa}_{\infty} (x, y) - \nabla u_{\infty} (x)$. Thus $\nabla_y v
  \in L^2 ( \RR^3 \times Q)$ and $v (x, \cdot) \in H^1_{\#} (Q) /
  \RR$, i.e$.$ $v \in L^2 [ \RR^3, H^1_{\#} (Q) / \RR ]$. This
  completes the proof.
\end{proof}

We are now ready to prove the
two-scale convergence of the demagnetizing field operator (for this we follow the lines of \cite{Santugini2007}):

\begin{proposition}
  \label{prop:Santugininew}Let $(\tmmathbf{m}_{\varepsilon})_{\varepsilon \in
  \RR^+}$ be a bounded family in $L^2 ( \RR^3, \RR^3 )$ that
  two-scale converges to $\tmmathbf{m} (x, y)$. Then the two-scale limit of
  $(\mathbf{h}_d [\tmmathbf{m}_{\varepsilon}])_{\varepsilon \in \RR^+} \in L^2
  ( \RR^3, \RR^3)$ exists and is given by
  \begin{equation}
    \mathbf{h}_d (x, y) =\mathbf{h}_d [\langle \tmmathbf{m} (x, \cdot)
    \rangle_Q] + \nabla_y v_{\tmmathbf{m}} (x, y)
  \end{equation}
  where for every $x \in \RR^3$ the scalar function $v_{\tmmathbf{m}} (x,
  \cdot)$ is the unique solution in $H^1_{\#} (Q)$ of the cell problem
  \begin{equation}
    \Delta_y v_{\tmmathbf{m}} (x, y) = -\tmop{div}_y \tmmathbf{m} (x, y)
    \hspace{1em} \text{in } H^1_{\#} (Q) / \RR
  \end{equation}
  and therefore of the variational cell problem
  \begin{equation}
    \int_Q \tmmathbf{m} (x, y) \cdot \nabla_y \psi (y) \mathd y = - \int_Q
    \nabla_y v_{\tmmathbf{m}} (x, y) \cdot \nabla_y \psi (y) \mathd y,
    \hspace{1em} \int_Q v_{\tmmathbf{m}} (x, y) \mathd y = 0
    \label{eq:varcellpr}
  \end{equation}
  for all $\psi \in H^1_{\#} (Q)$.
\end{proposition}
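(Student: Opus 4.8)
The plan is to exploit the variational characterization (\ref{eq:variationalformulationpotential2}) of the magnetostatic potential together with the weighted two-scale compactness established in Proposition \ref{prop:extcompactStsc}, and to identify the two-scale limit by the classical oscillating-test-function method.

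First, the stability estimate (\ref{eq:stabilitylaxmilgram}) gives $\|u_{\tmmathbf{m}_{\varepsilon}}\|_{BL^1(\RR^3)}\leqslant\|\tmmathbf{m}_{\varepsilon}\|_{L^2(\RR^3)}$, so the family of potentials $(u_{\tmmathbf{m}_{\varepsilon}})$ is bounded in $BL^1(\RR^3)$. Working along a fixed sequence $\varepsilon\to0$, Proposition \ref{prop:extcompactStsc} lets one extract a subsequence and produce $u_{\infty}\in BL^1(\RR^3)$ and $v\in L^2[\RR^3;H^1_{\#}(Q)/\RR]$, with $\langle v(x,\cdot)\rangle_Q=0$, such that $\nabla u_{\tmmathbf{m}_{\varepsilon}}\twoheadrightarrow\nabla u_{\infty}+\nabla_y v$ in $L^2(\RR^3,\RR^3)$.

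Next I would identify $u_{\infty}$ by inserting a non-oscillating $\varphi\in\mathcal{D}(\RR^3)$ into (\ref{eq:variationalformulationpotential2}) and passing to the two-scale limit: since $\int_Q\nabla_y v(x,y)\,\mathd y=\tmmathbf{0}$ by $Q$-periodicity, this collapses to $\int_{\RR^3}\nabla u_{\infty}\cdot\nabla\varphi=-\int_{\RR^3}\langle\tmmathbf{m}(x,\cdot)\rangle_Q\cdot\nabla\varphi$, hence (by density and Lax--Milgram uniqueness) $\nabla u_{\infty}=\mathbf{h}_d[\langle\tmmathbf{m}(x,\cdot)\rangle_Q]$. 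Then I would identify $v$ by testing (\ref{eq:variationalformulationpotential2}) against the oscillating, compactly supported function $\varphi_{\varepsilon}(x)\assign\varepsilon\,\phi(x)\psi(x/\varepsilon)$ with $\phi\in\mathcal{D}(\RR^3)$, $\psi\in C^{\infty}_{\#}(Q)$; writing $\nabla\varphi_{\varepsilon}(x)=\phi(x)\nabla_y\psi(x/\varepsilon)+\varepsilon\,\psi(x/\varepsilon)\nabla\phi(x)$ and discarding the $O(\varepsilon)$ term (harmless, since $\nabla u_{\tmmathbf{m}_{\varepsilon}}$ and $\tmmathbf{m}_{\varepsilon}$ are $L^2$-bounded), the two-scale limit with the admissible test function $(x,y)\mapsto\phi(x)\nabla_y\psi(y)$ yields, for a.e.\ $x\in\RR^3$,
\[
\int_Q\nabla_y v(x,y)\cdot\nabla_y\psi(y)\,\mathd y=-\int_Q\tmmathbf{m}(x,y)\cdot\nabla_y\psi(y)\,\mathd y\qquad\forall\,\psi\in C^{\infty}_{\#}(Q),
\]
the $\nabla u_{\infty}(x)$ contribution dropping out because $\int_Q\nabla_y\psi=\tmmathbf{0}$. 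By density in $H^1_{\#}(Q)$ this is exactly the variational cell problem (\ref{eq:varcellpr}), which has a unique solution (Lax--Milgram on $H^1_{\#}(Q)/\RR$, coercivity by the Poincaré--Wirtinger inequality), so $v=v_{\tmmathbf{m}}$.

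Since neither $u_{\infty}$ nor $v$ depends on the extracted subsequence, the standard subsequence argument upgrades the convergence to the whole family, giving $\mathbf{h}_d[\tmmathbf{m}_{\varepsilon}]=\nabla u_{\tmmathbf{m}_{\varepsilon}}\twoheadrightarrow\mathbf{h}_d[\langle\tmmathbf{m}(x,\cdot)\rangle_Q]+\nabla_y v_{\tmmathbf{m}}$. I expect the only genuinely delicate point to be the legitimacy of the compactness step on the unbounded domain $\RR^3$ with the weight $\omega$ — precisely what Propositions \ref{prop:compactl2w} and \ref{prop:extcompactStsc} were set up to supply — so that once those are invoked the argument is essentially bookkeeping; a minor point to keep in mind is that $\varphi_{\varepsilon}$ must have compact support to be a legitimate element of $BL^1(\RR^3)$.
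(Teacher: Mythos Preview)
Your proposal is correct and follows essentially the same route as the paper: bound the potentials in $BL^1(\RR^3)$ via the stability estimate, invoke the weighted two-scale compactness of Proposition~\ref{prop:extcompactStsc}, and identify the limit by testing the variational formulation against $\varphi(x)+\varepsilon\,\phi(x)\psi(x/\varepsilon)$, specializing first to $\psi\equiv0$ and then to $\varphi\equiv0$. The paper does not spell out the final subsequence-to-full-family upgrade that you mention, but otherwise the arguments coincide.
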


\begin{proof}
  Since $(\tmmathbf{m}_{\varepsilon})$ is bounded in $L^2 ( \RR^3)$, due to the stability estimate (\ref{eq:stabilitylaxmilgram}), the
  sequence of magnetostatic potentials $(u_{\tmmathbf{m}}^{\varepsilon})$
  solution of the problem $\Delta u_{\tmmathbf{m}}^{\varepsilon} = -
  \tmop{div} (\tmmathbf{m}_{\varepsilon})$, remains bounded in $BL^1( \RR^3)$. This means that, up to a subsequence, 
  $(u_{\tmmathbf{m}}^{\varepsilon}) \rightharpoonup u^{}_{\tmmathbf{m}}$
  weakly in $B_{} L^1_{} ( \RR^3 )$ for some suitable
  $u^{}_{\tmmathbf{m}} \in BL^1 ( \RR^3 )$. Thus, according to
  Proposition \ref{prop:extcompactStsc}, there exist functions $u_{\tmmathbf{m}} \in
  BL^1 ( \RR^3 )$ and $v_{\tmmathbf{m}} \in L^2[\RR^3; H^1_{\#} (Q) / \RR ]$ such that
  \begin{equation}
    (u^{\varepsilon}_{\tmmathbf{m}}) \twoheadrightarrow u_{\tmmathbf{m}}
    \text{ \ in } L^2_{\omega} \;\;\text{} \;
    \text{{{ and }}} \;\; \nabla u_{\tmmathbf{m}}^{\varepsilon}
    (x) \twoheadrightarrow \nabla u_{\tmmathbf{m}} (x) + \nabla_y
    v_{\tmmathbf{m}} (x, y) . \label{eqs:twoscalelimits}
  \end{equation}
  In view of the previous limit relations, $u^{\varepsilon}_{\tmmathbf{m}}$ is
  expected to behave as $u_{\tmmathbf{m}} (x) + \varepsilon v_{\tmmathbf{m}}
  (x, x / \varepsilon)$. This suggest to use, in the variational formulation
  of the magnetostatic problem expressed by equation
  (\ref{eq:variationalformulationpotential2}), test functions having the form
  $\varphi (x) + \varepsilon \psi (x, x / \varepsilon)$, with $\varphi \in
  \mathcal{D} ( \RR^3 )$ and $\psi \in \mathcal{D}[ \RR^3;  C^{\infty}_{\#} (Q) ]$.
	This yields
  \begin{align*}
    & \int_{\RR^3} \nabla u_{\tmmathbf{m}}^{\varepsilon} \cdot \left( \nabla
    \varphi + \varepsilon \nabla \psi \left( x, \frac{x}{\varepsilon} \right)
    + \nabla_y \psi \left( x, \frac{x}{\varepsilon} \right) \right)_{} \mathd
    x \\
		& \qquad \qquad = - \int_{\RR^3} \tmmathbf{m}_{\varepsilon} \cdot \left( \nabla \varphi
    + \varepsilon \nabla \psi \left( x, \frac{x}{\varepsilon} \right) +
    \nabla_y \psi \left( x, \frac{x}{\varepsilon} \right) \right) \mathd x .
  \end{align*}
  From the second of the two limit relations (\ref{eqs:twoscalelimits}), we
  get
      \begin{align}
      &\int_{\RR^3 \times Q} (\nabla u_{\tmmathbf{m}} (x) + \nabla_y
      v_{\tmmathbf{m}} (x, y)) \cdot (\nabla \varphi + \nabla_y \psi (x,
      y))_{} \mathd y \mathd x \nonumber\\
       & \qquad \qquad= - \int_{\RR^3 \times Q} \tmmathbf{m} (x, y) \cdot (\nabla \varphi +
      \nabla_y \psi (x, y)) \mathd y \mathd x .
     \label{eq:choosetestfunc}
  \end{align}
  In particular, by choosing $\psi \equiv 0$ we get
  \begin{align}
      - \int_{\RR^3} \langle \tmmathbf{m}_{} (x, y) \rangle_Q \cdot \nabla
      \varphi (x) \mathd x & =  \int_{\RR^3 \times Q} (\nabla
      u_{\tmmathbf{m}} (x) + \nabla_y v_{\tmmathbf{m}} (x, y)) \cdot \nabla
      \varphi_{} \mathd y \mathd x \nonumber \\
      & =  \int_{\RR^3} \nabla u_{\tmmathbf{m}} (x) \cdot \nabla \varphi (x)
    , \label{eq:homogenizedequationmagpotential}
  \end{align}
  where the last equality follows from the fact that $\langle \nabla_y v_{\tmmathbf{m}} (x,
  y) \rangle_Q = 0$. Thus, we reach the conclusion that the weak limit
  $u_{\tmmathbf{m}}$ satisfies the variational formulation
  (\ref{eq:homogenizedequationmagpotential}), i.e$.$ is a solution of the
  {\guillemotleft}homogenized{\guillemotright} equation
  \begin{equation}
    u_{\tmmathbf{m}} (x) = - \tmop{div} \langle \tmmathbf{m} (x, y) \rangle_Q
    \hspace{1em} \text{in } BL^1_{}( \RR^3) .
  \end{equation}
  On the other hand by choosing $\varphi \equiv 0$ and $\psi (x, y) = \psi_1
  (x) \psi_2 (y)$ in (\ref{eq:choosetestfunc}) we get
  \begin{align*}
    &\int_{\RR^3} \langle (\nabla u_{\tmmathbf{m}} (x) + \nabla_y
    v_{\tmmathbf{m}} (x, y)) \cdot \nabla_y \psi_2 (y) \rangle_Q \psi_1 (x)
    \mathd x \\
		&\qquad\qquad= - \int_{\RR^3} \langle \tmmathbf{m} (x, y) \cdot \nabla_y
    \psi_2 (y) \rangle_Q \psi_1 (x) \mathd x \, ,
  \end{align*}
  and hence the so-called cell problem
  \begin{align}
      - \int_Q \tmmathbf{m} (x, y) \cdot \nabla_y \psi_2 (y) \mathd y & = 
      \int_Q (\nabla u_{\tmmathbf{m}} (x) + \nabla_y v_{\tmmathbf{m}} (x, y))
      \cdot \nabla_y \psi_2 (y) \mathd y \nonumber\\
      & =  \int_Q \nabla_y v_{\tmmathbf{m}} (x, y) \cdot \nabla_y \psi_2 (y)
      \mathd y \, ,
     \label{eq:homogenizedequationmagpotential2}
  \end{align}
  where, again, the last equality follows from the fact that $\langle \nabla_y \psi_2 (y)
  \rangle_Q = 0$. Note that the variational formulation
  (\ref{eq:homogenizedequationmagpotential2}) can be more concisely expressed
  in the form
  \begin{equation}
    \Delta_y v_{\tmmathbf{m}} (x, y) = - \tmop{div}_y \tmmathbf{m} (x, y)
    \hspace{1em} \text{in } H^1_{\#} (Q) / \RR,
  \end{equation}
  and the well-posedness of the previous variational problem is again a direct
  consequence of {\tmname{Lax-Milgram}} theorem.
\end{proof}

\subsection{The continuous limit of magnetostatic self-energy functionals
$\mathcal{W}_{\varepsilon}$}

In what follows we will make use of Proposition \ref{prop:Santugininew}, to
prove the following

\begin{proposition}
  The family of magnetostatic self-energies
  \begin{equation}
    \mathcal{W}_{\varepsilon} : \tmmathbf{m} \in L^2 (\Omega, S^2) \mapsto -
    (\tmmathbf{h}_{\mathd} [M_{\varepsilon} \tmmathbf{m}], M_{\varepsilon}
    \tmmathbf{m})_{\Omega}
  \end{equation}
  continuously converges to the functional
  \begin{equation}
    \mathcal{W}_{\hom} : \tmmathbf{m} \in L^2 (\Omega, S^2) \mapsto - \langle
    M_s \rangle_Q^2 (\tmmathbf{h}_{\mathd} [\tmmathbf{m}],
    \tmmathbf{m})_{\Omega} + \| \nabla_y v_{\tmmathbf{m}} \|_{\Omega \times
    Q}^2
  \end{equation}
  where for every $x \in \Omega$ the scalar function $v_{\tmmathbf{m}} :
  \Omega \times Q \rightarrow \mathbbm{R}$ is the unique solution of the
  following variational cell problem:
  \begin{equation}
    \tmmathbf{m} (x) \cdot \int_Q M_s (y) \nabla_y \psi (y) \mathd y = -
    \int_Q \nabla_y v_{\tmmathbf{m}} (x, y) \cdot \nabla_y \psi (y) \mathd y ,
  \end{equation}
	
	\begin{equation}
	\int_Q v_{\tmmathbf{m}} (x, y) \mathd y = 0 ,
	\end{equation}
  for all $\psi \in H^1_{\#} (Q)$.
\end{proposition}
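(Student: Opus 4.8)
The plan is to verify the definition of continuous convergence directly. Fix $\tmmathbf{m}_0\in L^2(\Omega,\SSbb^2)$ and let $\tmmathbf{m}_\varepsilon\to\tmmathbf{m}_0$ in $L^2(\Omega,\SSbb^2)$ while $\varepsilon\downarrow 0$; the goal is $\mathcal{W}_\varepsilon(\tmmathbf{m}_\varepsilon)\to\mathcal{W}_{\hom}(\tmmathbf{m}_0)$. Working with extensions by zero to $\RR^3$, the first step identifies the two-scale limit of $M_\varepsilon\tmmathbf{m}_\varepsilon\chi_\Omega$. Since $\tmmathbf{m}_\varepsilon\to\tmmathbf{m}_0$ strongly in $L^2(\Omega)$, splitting off $\tmmathbf{m}_\varepsilon-\tmmathbf{m}_0$ (whose contribution vanishes by the strong convergence) and applying the generalized Riemann--Lebesgue lemma (Proposition~\ref{lemma:genRiemLeb}) to the remaining term gives
\[
M_\varepsilon\tmmathbf{m}_\varepsilon\chi_\Omega \twoheadrightarrow M_s(y)\,\tmmathbf{m}_0(x)\,\chi_\Omega(x).
\]
Crucially, the saturation constraint $|\tmmathbf{m}_\varepsilon|=1$ a.e.\ forces $\|M_\varepsilon\tmmathbf{m}_\varepsilon\chi_\Omega\|^2_{\RR^3}=\int_\Omega M_s(x/\varepsilon)^2\,\mathd x \to |\Omega|\,\langle M_s^2\rangle_Q = \|M_s(y)\tmmathbf{m}_0(x)\chi_\Omega(x)\|^2_{\RR^3\times Q}$, so this two-scale convergence is in fact \emph{strong}.

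Second, I apply Proposition~\ref{prop:Santugininew} to the bounded family $M_\varepsilon\tmmathbf{m}_\varepsilon\chi_\Omega\in L^2(\RR^3,\RR^3)$, whose two-scale limit has just been found. It gives that $\tmmathbf{h}_{\mathd}[M_\varepsilon\tmmathbf{m}_\varepsilon]$ two-scale converges in $L^2(\RR^3,\RR^3)$ to
\[
\tmmathbf{h}_{\mathd}\big[\langle M_s(\cdot)\,\tmmathbf{m}_0\,\chi_\Omega\rangle_Q\big]+\nabla_y v_{\tmmathbf{m}_0}(x,y)=\langle M_s\rangle_Q\,\tmmathbf{h}_{\mathd}[\tmmathbf{m}_0](x)+\nabla_y v_{\tmmathbf{m}_0}(x,y),
\]
by linearity of the demagnetizing operator, where, specializing \eqref{eq:varcellpr} to $\tmmathbf{m}=M_s(y)\tmmathbf{m}_0(x)$, the corrector $v_{\tmmathbf{m}_0}(x,\cdot)\in H^1_\#(Q)$ is for a.e.\ $x\in\Omega$ exactly the unique null-average solution of the cell problem in the statement (and vanishes for $x\notin\Omega$). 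Restriction to the open set $\Omega$ preserves two-scale convergence, so all the limits above hold on $\Omega$.

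Third, I pass to the limit in the quadratic functional. The sequence $\tmmathbf{h}_{\mathd}[M_\varepsilon\tmmathbf{m}_\varepsilon]$ two-scale converges, $M_\varepsilon\tmmathbf{m}_\varepsilon$ \emph{strongly} two-scale converges, and the product $\tmmathbf{h}_{\mathd}[M_\varepsilon\tmmathbf{m}_\varepsilon]\cdot M_\varepsilon\tmmathbf{m}_\varepsilon$ is bounded in $L^2(\Omega)$ (since $\|\tmmathbf{h}_{\mathd}[M_\varepsilon\tmmathbf{m}_\varepsilon]\|_{\RR^3}\le\|M_s\|_\infty|\Omega|^{1/2}$ and $|M_\varepsilon\tmmathbf{m}_\varepsilon|\le\|M_s\|_\infty$); hence Proposition~\ref{Prop:2scalproduct}, applied componentwise with test function $\varphi\equiv 1\in L^2[\Omega;C_\#(Q)]$, gives
\[
(\tmmathbf{h}_{\mathd}[M_\varepsilon\tmmathbf{m}_\varepsilon],M_\varepsilon\tmmathbf{m}_\varepsilon)_\Omega \longrightarrow \int_{\Omega\times Q}\big(\langle M_s\rangle_Q\,\tmmathbf{h}_{\mathd}[\tmmathbf{m}_0](x)+\nabla_y v_{\tmmathbf{m}_0}(x,y)\big)\cdot M_s(y)\,\tmmathbf{m}_0(x)\,\mathd y\,\mathd x.
\]
The first contribution equals $\langle M_s\rangle_Q\big(\int_Q M_s\big)(\tmmathbf{h}_{\mathd}[\tmmathbf{m}_0],\tmmathbf{m}_0)_\Omega=\langle M_s\rangle_Q^2(\tmmathbf{h}_{\mathd}[\tmmathbf{m}_0],\tmmathbf{m}_0)_\Omega$ since $|Q|=1$; for the second, testing the cell problem with $\psi=v_{\tmmathbf{m}_0}(x,\cdot)$ yields $\tmmathbf{m}_0(x)\cdot\int_Q M_s(y)\nabla_y v_{\tmmathbf{m}_0}(x,y)\,\mathd y=-\|\nabla_y v_{\tmmathbf{m}_0}(x,\cdot)\|^2_Q$, and integration in $x$ gives $-\|\nabla_y v_{\tmmathbf{m}_0}\|^2_{\Omega\times Q}$. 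Changing sign, $\mathcal{W}_\varepsilon(\tmmathbf{m}_\varepsilon)\to -\langle M_s\rangle_Q^2(\tmmathbf{h}_{\mathd}[\tmmathbf{m}_0],\tmmathbf{m}_0)_\Omega+\|\nabla_y v_{\tmmathbf{m}_0}\|^2_{\Omega\times Q}=\mathcal{W}_{\hom}(\tmmathbf{m}_0)$, which is the claim.

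The main obstacle is this last passage to the limit in the bilinear quantity: weak $L^2$-limits of products are not products of the limits, and both $\tmmathbf{h}_{\mathd}[M_\varepsilon\tmmathbf{m}_\varepsilon]$ and $M_\varepsilon\tmmathbf{m}_\varepsilon$ are only weakly convergent. This is why the \emph{strong} two-scale convergence of $M_\varepsilon\tmmathbf{m}_\varepsilon$ is indispensable, and it is precisely here that the micromagnetic saturation constraint $|\tmmathbf{m}_\varepsilon|\equiv 1$ is used in an essential way. The only other delicate point --- the mere existence and identification of the two-scale limit of $\tmmathbf{h}_{\mathd}[M_\varepsilon\tmmathbf{m}_\varepsilon]$ in the unbounded domain $\RR^3$ --- is already encapsulated in Proposition~\ref{prop:Santugininew}; everything else is bookkeeping built on Propositions~\ref{prop:Santugininew} and \ref{Prop:2scalproduct}.
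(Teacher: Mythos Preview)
Your proof is correct and uses the same core machinery as the paper: the strong two-scale convergence of $M_\varepsilon\tmmathbf{m}$ (via the saturation constraint $|\tmmathbf{m}|=1$), Proposition~\ref{prop:Santugininew} for the two-scale limit of the stray field, and Proposition~\ref{Prop:2scalproduct} for the product. The organization differs, however. The paper first proves \emph{pointwise} convergence $\mathcal{W}_\varepsilon(\tmmathbf{m})\to\mathcal{W}_{\hom}(\tmmathbf{m})$ for each \emph{fixed} $\tmmathbf{m}$, then upgrades to continuous convergence by the split $|\mathcal{W}_\varepsilon(\tmmathbf{m})-\mathcal{W}_{\hom}(\tmmathbf{m}_0)|\le|\mathcal{W}_\varepsilon(\tmmathbf{m})-\mathcal{W}_{\hom}(\tmmathbf{m})|+|\mathcal{W}_{\hom}(\tmmathbf{m})-\mathcal{W}_{\hom}(\tmmathbf{m}_0)|$ together with an explicit Lipschitz bound for $\mathcal{W}_{\hom}$. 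You instead verify the sequential form of continuous convergence directly, working from the start with a moving sequence $\tmmathbf{m}_\varepsilon\to\tmmathbf{m}_0$; this is a little more economical (no separate Lipschitz estimate is needed) and sidesteps the mild uniformity issue in the paper's split, where the $\varepsilon_0$ controlling $|\mathcal{W}_\varepsilon(\tmmathbf{m})-\mathcal{W}_{\hom}(\tmmathbf{m})|$ a~priori depends on $\tmmathbf{m}$. One small citation slip: the two-scale limit $M_\varepsilon\tmmathbf{m}_0\twoheadrightarrow M_s(y)\tmmathbf{m}_0(x)$ is not literally Proposition~\ref{lemma:genRiemLeb} (which only gives weak $L^p$ limits); it follows rather from the enlarged test-function result, since $M_\varepsilon\twoheadrightarrow M_s(y)$ and $m_0^i\,\varphi\in L^2[\Omega;C_\#(Q)]$ for each component $m_0^i\in L^\infty(\Omega)$ and each $\varphi\in\mathcal{D}[\Omega;C^\infty_\#(Q)]$.
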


\begin{proof}
  We know (see Proposition \ref{lemma:genRiemLeb}) that $| M_{\varepsilon}
  \tmmathbf{m} |^2 \equiv | M_{\varepsilon} |^2 \rightharpoonup \langle |
  M_{\varepsilon} |^2 \rangle_Q$ weakly$^{\ast}$ in $L^{\infty} (\Omega)$. In
  particular, by choosing $| \tmmathbf{m} |^2 \in L^1 (\Omega)$ as a test
  function we get
  \begin{align*}
    \| M_{\varepsilon} \|_{\Omega}^2 = (| M_{\varepsilon} \tmmathbf{m} |^2, |
    \tmmathbf{m} |^2)_{\Omega} \rightarrow (\langle | M_{\varepsilon} |^2
    \rangle_Q, | \tmmathbf{m} |^2)_{\Omega} &= | \Omega | \langle |
    M_{\varepsilon} |^2 \rangle_Q \\
		&= \| M_s (y) \tmmathbf{m} (x) \|_{\Omega
    \times Q}^2
  \end{align*}
  and therefore $M_{\varepsilon} (x) \tmmathbf{m} (x) \twoheadrightarrow M_s
  (y) \tmmathbf{m} (x)$ {{strongly}}. 
	
	Next, since
  $\tmmathbf{h}_{\mathd} [M_{\varepsilon} \tmmathbf{m}] \cdot M_{\varepsilon}
  \tmmathbf{m}$ is bounded in $L^2 (\Omega)$, from Proposition
  \ref{Prop:2scalproduct} and Proposition \ref{prop:Santugininew}, we get
  \begin{align}
    & \lim_{\varepsilon \rightarrow 0} - (\tmmathbf{h}_{\mathd} [M_{\varepsilon}
    \tmmathbf{m}], M_{\varepsilon} \tmmathbf{m})_{\Omega}  \nonumber\\
		& \qquad\qquad= - \langle M_s
    \rangle_Q^2 (\tmmathbf{h}_{\mathd} [\tmmathbf{m}], \tmmathbf{m})_{\Omega}
    - \int_{\Omega \times Q} \nabla_y v_{\tmmathbf{m}} (x, y) \cdot M_s (y)
    \tmmathbf{m} (x) \mathd x \mathd y.
  \end{align}
  Now, we observe that for every $x \in \Omega$ the scalar function
  $v_{\tmmathbf{m}} (x, \cdot)$ is the unique solution of the variational cell
  problem (\ref{eq:varcellpr}), therefore setting $\psi (\cdot) \assign
  v_{\tmmathbf{m}} (x, \cdot)$ in (\ref{eq:varcellpr}) we get
  \begin{equation}
    -\tmmathbf{m} (x) \cdot \int_Q M_s (y) \nabla_y v_{\tmmathbf{m}} (x, y)
    \mathd y = \int_Q | \nabla_y v_{\tmmathbf{m}} (x, y) |^2 \mathd y
  \end{equation}
  and therefore
  \begin{equation}
    \lim_{\varepsilon \rightarrow 0} - (\tmmathbf{h}_{\mathd} [M_{\varepsilon}
    \tmmathbf{m}], M_{\varepsilon} \tmmathbf{m})_{\Omega} = - \langle M_s
    \rangle_Q^2 (\tmmathbf{h}_{\mathd} [\tmmathbf{m}], \tmmathbf{m})_{\Omega}
    + \| \nabla_y v_{\tmmathbf{m}} \|_{\Omega \times Q}^2 = :
    \mathcal{W}_{\hom} (\tmmathbf{m}) . \label{eq:wcdemag}
  \end{equation}
  Now we show that the family $\mathcal{W}_{\varepsilon^{}}$ continuously
  converges to $\mathcal{W}_{\hom}$. This amounts to prove that for every
  $\tmmathbf{m} \in L^2 ( \Omega, \SSbb^2 )$ and every $\varepsilon
  \in \RR^+$, $\varepsilon < \varepsilon_0$ and $\|
  \tmmathbf{m}-\tmmathbf{m}_0 \|_{\Omega} < \delta$ implies $|
  \mathcal{W}_{\varepsilon} (\tmmathbf{m}) -\mathcal{W}_{\hom}
  (\tmmathbf{m}_0)  | < \eta$. To this end, for every $\tmmathbf{m},
  \tmmathbf{m}_0 \in L^2 \left( \Omega, \SSbb^2 \right)$ we split:
  \begin{equation}
    | \mathcal{W}_{\varepsilon} (\tmmathbf{m}) -\mathcal{W}_{\hom}
    (\tmmathbf{m}_0) | \leqslant | \mathcal{W}_{\varepsilon} (\tmmathbf{m})
    -\mathcal{W}_{\hom} (\tmmathbf{m}_{}) | + | \mathcal{W}_{\hom}
    (\tmmathbf{m}) -\mathcal{W}_{\hom} (\tmmathbf{m}_0) | .
  \end{equation}
  From (\ref{eq:wcdemag}) it follows the existence of a sufficiently small
  $\varepsilon_0$ such that
  \begin{equation}
    \forall \varepsilon < \varepsilon_0 \hspace{1em} |
    \mathcal{W}_{\varepsilon} (\tmmathbf{m}) -\mathcal{W}_{\hom}
    (\tmmathbf{m}_{}) | < \frac{\eta}{2} . \label{eq:magnselfenergyboundhom1}
  \end{equation}
  On the other hand, since $\tmmathbf{m} \mapsto \nabla_y v_{\tmmathbf{m}}$ is
  a linear and continuous map from $L^2 \left( \Omega, \RR^3 \right)$ into
  $L^2 \left( \Omega \times Q, \RR^3 \right)$ with $\| \nabla_y
  v_{\tmmathbf{m}} \|_{\Omega \times Q} \leqslant c_M  \| \tmmathbf{m}
  \|_{\Omega}$ (and $c_M \assign \| M_s \|_Q$) one has
  \begin{equation}
	\def\arraystretch{1.3}
    \begin{array}{lll}
      | \mathcal{W}_{\hom} (\tmmathbf{m}) -\mathcal{W}_{\hom} (\tmmathbf{m}_0)
      | & \leqslant & \langle M_s \rangle_Q^2 | (\tmmathbf{h}_{\mathd}
      [\tmmathbf{m}], \tmmathbf{m})_{\Omega} - (\tmmathbf{h}_{\mathd}
      [\tmmathbf{m}_0], \tmmathbf{m}_0)_{\Omega} |\\
      &  & \hspace{1em} + | \| \nabla_y v_{\tmmathbf{m}} \|^2_{\Omega \times
      Q} - \| \nabla_y v_{\tmmathbf{m}_0} \|^2_{\Omega \times Q}  |\\
      & \leqslant & \langle M_s \rangle_Q^2 | (\tmmathbf{h}_{\mathd}
      [\tmmathbf{m}+\tmmathbf{m}_0], \tmmathbf{m}-\tmmathbf{m}_0)_{\Omega} |\\
      &  & \hspace{1em} + | (\nabla_y v_{\tmmathbf{m}+\tmmathbf{m}_0},
      \nabla_y v_{\tmmathbf{m}-\tmmathbf{m}_0})_{\Omega \times Q} |\\
      & \leqslant & 2 | \Omega |^{1 / 2} (\langle M_s \rangle_Q^2 + c_M^2) \|
      \tmmathbf{m}-\tmmathbf{m}_0 \|_{\Omega} .
    \end{array} \label{eq:magnselfenergyboundhom2}
  \end{equation}
  and the previous estimate together with (\ref{eq:magnselfenergyboundhom1})
  clearly concludes the proof.
\end{proof}

\section{The homogenized anisotropy and interaction
energies}\label{subsec:proof4}

This section is devoted to the proof of the continuous convergence of the
family of anisotropy energy functionals $\mathcal{A}_{\varepsilon}$ and of the
family of interaction energy functionals $\mathcal{Z}_{\varepsilon}$,
respectively to $\mathcal{A}_{\hom}$ and $\mathcal{Z}_{\hom}$, whose
expression is given by (\ref{eq:homAnthm1}) and (\ref{eq:homIntthm1}).

\subsection{The continuous limit of the anisotropy energy functionals
$\mathcal{A}_{\varepsilon}$}

\begin{proposition}
  If the anisotropy energy density $\varphi_{\tmop{an}} : \RR^3 \times \SSbb^2
  \rightarrow \RR^+$ is $Q$-periodic with respect to the first variable and
  globally lipschitz with respect to the second one (uniformly with respect to
  the first variable) then the family $\mathcal{A}_{\varepsilon}$ of
  anisotropy energies continuously converges to the homogenized anisotropy
  energy
  \begin{equation}
    \mathcal{A}_{\hom} : \tmmathbf{m} \in L^2 \left( \Omega, \SSbb^2 \right)
    \mapsto \int_{\Omega \times Q} \varphi_{\tmop{an}} (y, \tmmathbf{m} (x))
    \mathd y \mathd x .
  \end{equation}
\end{proposition}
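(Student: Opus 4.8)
The plan is to reduce the claimed continuous convergence $\mathcal{A}_\varepsilon\xrightarrow{\Gamma_{\tmop{cont}}}\mathcal{A}_{\hom}$ to the pointwise statement $\mathcal{A}_\varepsilon(\tmmathbf{m}_0)\to\mathcal{A}_{\hom}(\tmmathbf{m}_0)$ for each fixed $\tmmathbf{m}_0$, and then to prove the latter first for piecewise constant magnetizations and finally in general by density. I would first record the elementary Lipschitz estimate coming from $\tmmathbf{[H_2]}$: for every $\varepsilon>0$ and every $\tmmathbf{m},\tmmathbf{m}_0\in L^2(\Omega,\SSbb^2)$, the Lipschitz continuity of $\varphi_{\tmop{an}}$ in its second variable together with Cauchy--Schwarz gives
\[
|\mathcal{A}_\varepsilon(\tmmathbf{m})-\mathcal{A}_\varepsilon(\tmmathbf{m}_0)|\leqslant \kappa_L\int_\Omega|\tmmathbf{m}(x)-\tmmathbf{m}_0(x)|\,\mathd x\leqslant \kappa_L|\Omega|^{1/2}\,\|\tmmathbf{m}-\tmmathbf{m}_0\|_\Omega,
\]
and the same computation, after integrating also in $y\in Q$ (recall $|Q|=1$), yields $|\mathcal{A}_{\hom}(\tmmathbf{m})-\mathcal{A}_{\hom}(\tmmathbf{m}_0)|\leqslant \kappa_L|\Omega|^{1/2}\|\tmmathbf{m}-\tmmathbf{m}_0\|_\Omega$. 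Hence all the $\mathcal{A}_\varepsilon$ and $\mathcal{A}_{\hom}$ are $\kappa_L|\Omega|^{1/2}$-Lipschitz on $(L^2(\Omega,\RR^3),\|\cdot\|_\Omega)$, \emph{uniformly} in $\varepsilon$. Using the splitting $|\mathcal{A}_\varepsilon(\tmmathbf{m})-\mathcal{A}_{\hom}(\tmmathbf{m}_0)|\leqslant |\mathcal{A}_\varepsilon(\tmmathbf{m})-\mathcal{A}_\varepsilon(\tmmathbf{m}_0)|+|\mathcal{A}_\varepsilon(\tmmathbf{m}_0)-\mathcal{A}_{\hom}(\tmmathbf{m}_0)|$, continuous convergence is reduced to $\mathcal{A}_\varepsilon(\tmmathbf{m}_0)\to\mathcal{A}_{\hom}(\tmmathbf{m}_0)$ as $\varepsilon\to0$ for each fixed $\tmmathbf{m}_0$.

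Next I would prove this pointwise convergence when $\tmmathbf{m}_0=\sum_{i=1}^N c_i\chi_{E_i}$ is a simple function with $c_i\in\SSbb^2$ and $\{E_i\}$ a measurable partition of $\Omega$. On each $E_i$ the integrand $\varphi_{\tmop{an}}(x/\varepsilon,c_i)$ is the $\varepsilon$-rescaling of the $Q$-periodic $L^\infty(Q)$ function $y\mapsto\varphi_{\tmop{an}}(y,c_i)$ (by $\tmmathbf{[H_2]}$), so the generalized Riemann--Lebesgue lemma (Proposition~\ref{lemma:genRiemLeb}) gives $\varphi_{\tmop{an}}(\cdot/\varepsilon,c_i)\rightharpoonup\langle\varphi_{\tmop{an}}(\cdot,c_i)\rangle_Q$ weakly$^{\ast}$ in $L^\infty(\Omega)$; pairing with $\chi_{E_i}\in L^1(\Omega)$ and summing over $i$,
\begin{align*}
\mathcal{A}_\varepsilon(\tmmathbf{m}_0)=\sum_{i=1}^N\int_{E_i}\varphi_{\tmop{an}}(x/\varepsilon,c_i)\,\mathd x &\longrightarrow \sum_{i=1}^N|E_i|\,\langle\varphi_{\tmop{an}}(\cdot,c_i)\rangle_Q\\
&=\int_{\Omega\times Q}\varphi_{\tmop{an}}(y,\tmmathbf{m}_0(x))\,\mathd y\,\mathd x=\mathcal{A}_{\hom}(\tmmathbf{m}_0).
\end{align*}

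Finally I would pass to a general $\tmmathbf{m}_0\in L^2(\Omega,\SSbb^2)$ by a three-$\varepsilon$ argument. Since $\SSbb^2$ is a compact (hence separable) metric space, there exist $\SSbb^2$-valued simple functions $\tmmathbf{s}_k$ with $\tmmathbf{s}_k\to\tmmathbf{m}_0$ a.e., hence in $L^2(\Omega,\RR^3)$ by dominated convergence (the difference being dominated by the constant $2$ on the bounded set $\Omega$). Given $\eta>0$, the uniform Lipschitz bound of the first paragraph lets me pick $k$ so that $|\mathcal{A}_\varepsilon(\tmmathbf{m}_0)-\mathcal{A}_\varepsilon(\tmmathbf{s}_k)|<\eta/3$ for every $\varepsilon$ and $|\mathcal{A}_{\hom}(\tmmathbf{m}_0)-\mathcal{A}_{\hom}(\tmmathbf{s}_k)|<\eta/3$; by the previous step, $|\mathcal{A}_\varepsilon(\tmmathbf{s}_k)-\mathcal{A}_{\hom}(\tmmathbf{s}_k)|<\eta/3$ for all small $\varepsilon$, and combining the three estimates gives $\mathcal{A}_\varepsilon(\tmmathbf{m}_0)\to\mathcal{A}_{\hom}(\tmmathbf{m}_0)$; the reduction of the first paragraph then yields continuous convergence. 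The main obstacle is precisely this last step: one must approximate an arbitrary $\SSbb^2$-valued map by \emph{$\SSbb^2$-valued} simple functions (so that $\varphi_{\tmop{an}}$, defined only on $\SSbb^2$, can be evaluated) and, more essentially, convert the pointwise convergence on the dense class into genuine continuous convergence --- which is exactly what the $\varepsilon$-uniform Lipschitz estimate makes possible. (Alternatively one could observe that $x\mapsto\varphi_{\tmop{an}}(\cdot,\tmmathbf{m}_0(x))$ is a bounded measurable map $\Omega\to L^\infty_{\#}(Q)$ and invoke the mean-value property of such admissible integrands, but the simple-function route keeps the argument self-contained.)
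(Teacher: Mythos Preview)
Your proof is correct. The paper's argument uses the same two ingredients --- the Lipschitz bound in the second variable and the Riemann--Lebesgue lemma --- but organizes them differently: it splits $\mathcal{A}_\varepsilon(\tmmathbf{m})-\mathcal{A}_{\hom}(\tmmathbf{m}_0)$ as $[\mathcal{A}_\varepsilon(\tmmathbf{m})-\mathcal{A}_{\hom}(\tmmathbf{m})]+[\mathcal{A}_{\hom}(\tmmathbf{m})-\mathcal{A}_{\hom}(\tmmathbf{m}_0)]$, invokes Riemann--Lebesgue on the first bracket (asserting $\varphi_{\tmop{an}}(x/\varepsilon,\tmmathbf{m}(x))\rightharpoonup\langle\varphi_{\tmop{an}}(\cdot,\tmmathbf{m}(x))\rangle_Q$ weakly$^{\ast}$ in $L^\infty(\Omega)$), and handles the second bracket by Lipschitz continuity. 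Your splitting $[\mathcal{A}_\varepsilon(\tmmathbf{m})-\mathcal{A}_\varepsilon(\tmmathbf{m}_0)]+[\mathcal{A}_\varepsilon(\tmmathbf{m}_0)-\mathcal{A}_{\hom}(\tmmathbf{m}_0)]$ is actually the cleaner one for continuous convergence: the Lipschitz term is manifestly uniform in $\varepsilon$, and the Riemann--Lebesgue step is carried out at the \emph{fixed} point $\tmmathbf{m}_0$, where the simple-function reduction makes the citation of Proposition~\ref{lemma:genRiemLeb} literal. The paper's route is shorter but leaves two points implicit --- that the lemma extends to integrands $\varphi_{\tmop{an}}(x/\varepsilon,\tmmathbf{m}(x))$ with $x$-dependent second slot, and that the resulting $\varepsilon_0$ can be taken independent of $\tmmathbf{m}$ near $\tmmathbf{m}_0$ --- and these are precisely what your uniform Lipschitz estimate and density argument supply.
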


\begin{proof}
  Again, we have to prove that for every $\tmmathbf{m}_0 \in L^2 (
  \Omega, \SSbb^2 )$ one has 
	\[
	\lim_{(\varepsilon, \tmmathbf{m})
  \rightarrow (0, \tmmathbf{m}_0)} \mathcal{A}_{\varepsilon} (\tmmathbf{m})
  =\mathcal{A}_{\hom} (\tmmathbf{m}_0).
	\]
	For every $\tmmathbf{m},
  \tmmathbf{m}_0 \in L^2 \left( \Omega, \SSbb^2 \right)$ we split
  \begin{align}
      & \int_{\Omega \times Q} \varphi (x / \varepsilon, \tmmathbf{m} (x)) -
      \varphi (y, \tmmathbf{m}_0 (x)) \mathd y \mathd x \nonumber\\
			 & \qquad\qquad =  \int_{\Omega
      \times Q} \varphi (x / \varepsilon, \tmmathbf{m} (x)) - \varphi (y,
      \tmmathbf{m} (x))_Q \mathd y \mathd x \nonumber\\
			& \qquad\qquad\qquad+ \int_{\Omega \times Q} \varphi (y,
      \tmmathbf{m} (x)) - \varphi (y, \tmmathbf{m}_0 (x)) \mathd y \mathd x .
      \label{eq:homan0}
  \end{align}
  Next, we observe that since $\varphi (x / \varepsilon, \tmmathbf{m} (x))
  \rightharpoonup \langle \varphi (y, \tmmathbf{m} (x)) \rangle_Q$
  weakly$^{\ast}$ in $L^{\infty} (\Omega)$ (cfr. Lemma
  \ref{lemma:genRiemLeb}), there exists a sufficiently small $\varepsilon_0$
  such that for every $\varepsilon < \varepsilon_0$
  \begin{equation}
    \hspace{1em} \left|  \int_{\Omega \times Q} \varphi (x / \varepsilon,
    \tmmathbf{m} (x)) - \varphi (y, \tmmathbf{m} (x)) \mathd y \mathd x
    \right| < \frac{\eta}{2} . \label{eq:homan1}
  \end{equation}
  On the other hand, by the global lipschitz continuity in the second variable
  of $\varphi_{\tmop{an}}$, we have
  \begin{align}
    \int_{\Omega \times Q} | \varphi (y, \tmmathbf{m} (x)) - \varphi (y,
    \tmmathbf{m}_0 (x)) | \mathd y \mathd x & \leqslant c_L  \int_{\Omega} |
    \tmmathbf{m} (x) -\tmmathbf{m}_0 (x) | \mathd x \\
		& \leqslant c_L | \Omega
    |^{1 / 2} \| \tmmathbf{m}-\tmmathbf{m}_0 \|_{\Omega} . \label{eq:homan2}
  \end{align}
  Substituting estimates (\ref{eq:homan1}) and (\ref{eq:homan2}) into
  (\ref{eq:homan0}) we get
  \begin{equation}
    | \mathcal{A}_{\varepsilon} (\tmmathbf{m}) -\mathcal{A}_{\hom}
    (\tmmathbf{m}_0)  | \leqslant \frac{\eta}{2} \noplus + c^{\star} \|
    \tmmathbf{m}-\tmmathbf{m}_0 \|_{\Omega} \hspace{2em} \left( \text{with }
    c^{\star} \assign c_L | \Omega |^{1 / 2} \right) .
  \end{equation}
  Therefore for every $\tmmathbf{m} \in L^2 ( \Omega, \SSbb^2 )$
  such that $\| \tmmathbf{m}-\tmmathbf{m}_0 \|_{\Omega} < \eta / (2
  c^{\star})$ we get $| \mathcal{A}_{\varepsilon} (\tmmathbf{m})
  -\mathcal{A}_{\hom} (\tmmathbf{m}_0)  | \leqslant \eta$, and this concludes
  the proof.
\end{proof}

\begin{corollary}
  {\tmem{$(\nobracket$Uniaxial anisotropy energy density$\nobracket)$}}. If
  $\varphi (y, \tmmathbf{m}) = \kappa (y) | \tmmathbf{m} (x) \wedge
  \tmmathbf{u} (y) |^2$ then
  \[ \mathcal{A}_{\hom} (\tmmathbf{m}) = \int_{\Omega} \langle \kappa
     \rangle_Q - \langle \kappa \tmmathbf{u} \otimes \tmmathbf{u} \rangle_Q :
     \tmmathbf{m} \otimes \tmmathbf{m} \mathd \tau . \]
\end{corollary}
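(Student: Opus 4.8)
The plan is to deduce the statement directly from the formula $\mathcal{A}_{\hom}(\tmmathbf{m}) = \int_{\Omega \times Q} \varphi_{\tmop{an}}(y, \tmmathbf{m}(x))\, \mathd y\, \mathd x$ furnished by the preceding Proposition, once one checks that the uniaxial density $\varphi_{\tmop{an}}(y, \tmmathbf{m}) = \kappa(y)\, |\tmmathbf{m} \wedge \tmmathbf{u}(y)|^2$ is admissible. Thus the first step is to verify hypothesis $\tmmathbf{[H_2]}$: assuming $\kappa$ is a $Q$-periodic function in $L^{\infty}(Q)$ and $\tmmathbf{u}(\cdot)$ is a $Q$-periodic measurable easy-axis field with $|\tmmathbf{u}(y)| = 1$ for a.e. $y \in Q$, the map $s \in \SSbb^2 \mapsto |s \wedge \tmmathbf{u}(y)|^2$ is globally Lipschitz on $\SSbb^2$, uniformly in $y$. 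Indeed, by the Lagrange identity $|a \wedge b|^2 = |a|^2 |b|^2 - (a \cdot b)^2$, the normalization $|\tmmathbf{u}(y)| = 1$, and factoring a difference of squares, one has for $s_1, s_2 \in \SSbb^2$
\[
\big|\, |s_1 \wedge \tmmathbf{u}(y)|^2 - |s_2 \wedge \tmmathbf{u}(y)|^2 \,\big| = \big|\, (s_1 - s_2) \cdot \tmmathbf{u}(y) \,\big|\;\big|\, (s_1 + s_2) \cdot \tmmathbf{u}(y) \,\big| \leqslant 2\, |s_1 - s_2|,
\]
so $\varphi_{\tmop{an}}$ is Lipschitz in its second variable with constant $\kappa_L = 2 \| \kappa \|_{L^{\infty}(Q)}$, and the preceding Proposition applies.

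The second step is the pointwise reduction of the integrand. Using once more $|a \wedge b|^2 = |a|^2 |b|^2 - (a \cdot b)^2$ together with $\tmmathbf{m}(x) \in \SSbb^2$ and $|\tmmathbf{u}(y)| = 1$, one gets for a.e. $(x, y) \in \Omega \times Q$
\[
\varphi_{\tmop{an}}(y, \tmmathbf{m}(x)) = \kappa(y)\,\big[\, 1 - (\tmmathbf{m}(x) \cdot \tmmathbf{u}(y))^2 \,\big],
\]
and I would then rewrite the scalar $(\tmmathbf{m}(x) \cdot \tmmathbf{u}(y))^2 = \sum_{i,j} m_i(x) m_j(x) u_i(y) u_j(y)$ as the Frobenius contraction $(\tmmathbf{m}(x) \cdot \tmmathbf{u}(y))^2 = \big( \tmmathbf{u}(y) \otimes \tmmathbf{u}(y) \big) : \big( \tmmathbf{m}(x) \otimes \tmmathbf{m}(x) \big)$ of the two rank-one tensors.

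The final step is to integrate, first in $y$ over $Q$ (recall $|Q| = 1$, so integrating over $Q$ coincides with the average $\langle \cdot \rangle_Q$) and then in $x$ over $\Omega$, exploiting the linearity of $A : B$ in its first argument to pull the purely $x$-dependent factor $\tmmathbf{m}(x) \otimes \tmmathbf{m}(x)$ out of the $y$-average:
\[
\int_Q \varphi_{\tmop{an}}(y, \tmmathbf{m}(x))\, \mathd y = \langle \kappa \rangle_Q - \langle \kappa\, \tmmathbf{u} \otimes \tmmathbf{u} \rangle_Q : \tmmathbf{m}(x) \otimes \tmmathbf{m}(x);
\]
integrating this identity in $x$ over $\Omega$ gives exactly the asserted expression for $\mathcal{A}_{\hom}(\tmmathbf{m})$. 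I do not anticipate any real obstacle, since this is a direct corollary of the preceding Proposition; the only points worth a word of care are the unit-vector normalization $|\tmmathbf{u}(y)| = 1$ --- precisely what collapses the isotropic term from $\langle \kappa |\tmmathbf{u}|^2 \rangle_Q$ to $\langle \kappa \rangle_Q$ --- and the short Lipschitz verification required to legitimately invoke that Proposition.
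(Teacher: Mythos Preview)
Your argument is correct and complete. The paper states this corollary without proof, treating it as an immediate consequence of the preceding Proposition; your write-up simply makes explicit the Lagrange identity $|a\wedge b|^2=|a|^2|b|^2-(a\cdot b)^2$, the tensor rewriting $(\tmmathbf{m}\cdot\tmmathbf{u})^2=(\tmmathbf{u}\otimes\tmmathbf{u}):(\tmmathbf{m}\otimes\tmmathbf{m})$, and the Lipschitz check needed to invoke that Proposition, all of which are the natural steps one would fill in.
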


\subsection{The continuous limit of interaction energy functionals
$\mathcal{Z}_{\varepsilon}$}\label{subsec:proof5}

The convergence of $(\mathcal{Z}_{\varepsilon})_{\varepsilon \in \RR^+}$ to
$\mathcal{Z}_{\varepsilon}$ is straightforward. Indeed this energy term is
expressed by the product, with respect to the $L^2 (\Omega)$ scalar product,
of the constant function $\tmmathbf{h}_a$ and the weakly converging sequence
$(M_{\varepsilon} \tmmathbf{m})_{\varepsilon \in \RR^+} \rightharpoonup
\langle M_s \rangle_Q$ weakly$^{\ast}$ in $L^{\infty} (\Omega)$ (cfr. Lemma
\ref{lemma:genRiemLeb}). Therefore repeating the same argument given in the
previous subsection:
\[ \mathcal{Z}_{\varepsilon}  \underset{\varepsilon \rightarrow
   0}{\xrightarrow{\Gamma_{\tmop{cont}}}} \mathcal{Z}_{\hom} \hspace{1em}
   \text{with} \hspace{1em} \mathcal{Z}_{\hom} (\tmmathbf{m}) \assign -
   \langle M_s \rangle_Q \int_{\Omega} \tmmathbf{h}_a \cdot \tmmathbf{m}
   \mathd {\tau}. \]
\section{Proof of \tmtextup{Theorem
\ref{thm:mainresult}} completed}\label{sec:proof}

It is now easy to complete the proof of Theorem \ref{thm:mainresult}. 
Indeed the equicoercivity of the family of \tmtextsc{Gibbs-Landau}
free energy functionals
$(\mathcal{G}^{\varepsilon}_{\mathcal{L}})_{\varepsilon \in \RR^+}$ expressed
by (\ref{eq:GLComposites}) has been proved in Section \ref{subsec:proof1}. It
is therefore sufficient to recall the stability properties of the
$\Gamma$-limit under the sum of a continuously convergent family of
functionals. In fact, what has been proved in the previous subsections, can be
summarized by the following convergence scheme
\begin{equation}
  \mathcal{E}_{\varepsilon}  \underset{\varepsilon \rightarrow
  0}{\xrightarrow{\Gamma_{}}} \mathcal{E}_{\hom} \hspace{1em}, \hspace{1em}
  \mathcal{W}_{\varepsilon}  \underset{\varepsilon \rightarrow
  0}{\xrightarrow{\Gamma_{\tmop{cont}}}} \mathcal{W}_{\hom} \hspace{1em},
  \hspace{1em} \mathcal{A}_{\varepsilon}  \underset{\varepsilon \rightarrow
  0}{\xrightarrow{\Gamma_{\tmop{cont}}}} \mathcal{A}_{\hom} \hspace{1em},
  \hspace{1em} \mathcal{Z}_{\varepsilon}  \underset{\varepsilon \rightarrow
  0}{\xrightarrow{\Gamma_{\tmop{cont}}}} \mathcal{Z}_{\hom} .
\end{equation}
Thus, Proposition \ref{prop:sumGlimit} completes the proof.

\section{Conclusions and acknowledgment}
We have given in this paper a complete theory for periodic microstructured magnetic
materials. Obtained through a process of $\Gamma-$convergence the model derives 
rigorously the energy terms from the parameters of each constituent of the
sample and the mixing geometry of the different materials in the unit periodic cell. 
We believe that the result applies to most of magnetic composites that are nowadays
considered, e.g. those obtained from a mixing of hard and soft phases \cite{Hadjipanayis,Skomski}
or the multilayer magnetic materials \cite{Hu,Renard}. In this latter case, the formula obtained 
furthermore simplifies since the exchange coefficient can be analytically computed.
We leave the exploration of potential applications to a forthcoming work.
 
\appendix\section{Restatement of some well-known result}\label{sec:appendix}

In this appendix we state, in a bit general form, two results that we
mentioned in the previous sections. We start with a result whose proof can be
extracted from {\cite{muller1987homogenization}}, which permits to pass, in
the characterization of the homogenized energy density, from homogeneous
boundary conditions to periodic ones. The proof, although well-known in the
homogenization community, is difficult to find in the classical monographs on
the subject:

\begin{proposition}
  Let $\mathcal{V}$ be a vector subspace of $\RR^3$ and let $g : \RR^3 \times
  \RR^{3 \times 3} \rightarrow \RR^+$ be a Carath{\'e}odory function
  satisfying the hypotheses of {\tmname{(thm)}} with $\mathcal{M} \assign
  \mathcal{V}$ \tmtextup{(}\tmtextup{see Proposition}
  \tmtextup{\ref{prop:baba}}\tmtextup{)}. Then, for every $(s, \xi) \in
  \mathcal{V} \times \mathcal{V}^3$ one has $T_{} g_{\hom} (s, \xi) =
  T^{\#}_{} g_{\hom} (\xi)$ with
  \begin{equation}
    T^{\#}_{} g_{\hom} (\xi) \assign \lim_{t \rightarrow \infty} \left[
    \inf_{\tmmathbf{\varphi} \in H_{\#}^1 (Q_t, \mathcal{V})} \frac{1}{| Q_t
    |} \int_{Q_t} g (y, \xi + \nabla \tmmathbf{\varphi} (y)) \mathd y \right]
    . \label{eq:homogeneoustoperiodic}
  \end{equation}
\end{proposition}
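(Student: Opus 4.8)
The plan is to prove the two inequalities $T^{\#}_{}g_{\hom}(\xi)\leqslant T_{}g_{\hom}(s,\xi)$ and $T_{}g_{\hom}(s,\xi)\leqslant T^{\#}_{}g_{\hom}(\xi)$ separately, for every $(s,\xi)\in\mathcal{V}\times\mathcal{V}^3$. Two preliminary remarks are in order. Since $\mathcal{V}$ is a vector subspace, $T_s(\mathcal{V})=\mathcal{V}$ for every $s\in\mathcal{V}$, so the tangentially homogenized density \eqref{eq:corrector} of {\tmname{(thm)}} (Proposition~\ref{prop:baba}) reads
\[
T_{}g_{\hom}(s,\xi) \;=\; \lim_{t\to\infty}\Big(\inf_{\tmmathbf{\varphi}\in W_0^{1,\infty}(Q_t,\mathcal{V})}\frac{1}{|Q_t|}\int_{Q_t} g(y,\xi+\nabla\tmmathbf{\varphi}(y))\,\mathd y\Big),
\]
which is already independent of $s$, and the limit exists by {\tmname{(thm)}}. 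Moreover, under the quadratic growth bound $\alpha|\eta|^2\leqslant g(y,\eta)\leqslant\beta(1+|\eta|^2)$ the functional $\tmmathbf{\varphi}\mapsto\int_{Q_t}g(y,\xi+\nabla\tmmathbf{\varphi})\,\mathd y$ is strongly continuous on $H^1(Q_t,\mathcal{V})$ (dominated convergence along subsequences), so its infimum over $W_0^{1,\infty}(Q_t,\mathcal{V})$ equals the one over $H^1_0(Q_t,\mathcal{V})$, and likewise its infimum over $W^{1,\infty}_{\#}$ (or $C^{\infty}_{\#}$) equals the one over $H^1_{\#}$; I will pass between these freely. The first inequality is then immediate: any $\tmmathbf{\varphi}\in W_0^{1,\infty}(Q_t,\mathcal{V})$, extended by $Q_t$-periodicity, belongs to $H^1_{\#}(Q_t,\mathcal{V})$, so for each $t$ the periodic cell infimum does not exceed the Dirichlet one; letting $t\to\infty$ gives $T^{\#}_{}g_{\hom}(\xi)\leqslant T_{}g_{\hom}(s,\xi)$.

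For the converse I would use the standard replication-and-cut-off argument. Fix an integer $t\in\NN$ and a zero-average $\tmmathbf{\varphi}\in H^1_{\#}(Q_t,\mathcal{V})$, extended $Q_t$-periodically to $\RR^3$. Because $g(\cdot,\eta)$ is $Q$-periodic and $t$ is an integer, the map $y\mapsto g(y,\xi+\nabla\tmmathbf{\varphi}(y))$ is $Q_t$-periodic, hence $\int_{Q_{jt}}g(y,\xi+\nabla\tmmathbf{\varphi})\,\mathd y=j^3\int_{Q_t}g(y,\xi+\nabla\tmmathbf{\varphi})\,\mathd y$ for every $j\in\NN$. For each $j$ choose $\theta_j\in C^{\infty}_c(Q_{(j+1)t})$ with $\theta_j\equiv1$ on $Q_{jt}$, $0\leqslant\theta_j\leqslant1$ and $|\nabla\theta_j|\leqslant C/t$ (with $C$ absolute), and set $\tmmathbf{\varphi}_j\assign\theta_j\tmmathbf{\varphi}$, which lies in $H^1_0(Q_{(j+1)t},\mathcal{V})$ since $\mathcal{V}$ is a subspace and $\theta_j$ is scalar. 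Splitting $\int_{Q_{(j+1)t}}g(y,\xi+\nabla\tmmathbf{\varphi}_j)\,\mathd y$ over $Q_{jt}$ and over the shell $Q_{(j+1)t}\setminus Q_{jt}$, using $\nabla\tmmathbf{\varphi}_j=\theta_j\nabla\tmmathbf{\varphi}+\tmmathbf{\varphi}\otimes\nabla\theta_j$, the upper growth bound, and the $Q_t$-periodicity of $\tmmathbf{\varphi}$ (so that the $L^2(Q_{(j+1)t}\setminus Q_{jt})$-norms of $\nabla\tmmathbf{\varphi}$ and of $\tmmathbf{\varphi}$ equal $((j+1)^3-j^3)$ times their values on $Q_t$), one obtains a constant $K=K(t,\tmmathbf{\varphi},\xi)$, independent of $j$, with
\[
\frac{1}{|Q_{(j+1)t}|}\int_{Q_{(j+1)t}}g(y,\xi+\nabla\tmmathbf{\varphi}_j)\,\mathd y \;\leqslant\; \frac{j^3}{(j+1)^3}\,\frac{1}{|Q_t|}\int_{Q_t}g(y,\xi+\nabla\tmmathbf{\varphi})\,\mathd y \;+\; \frac{(j+1)^3-j^3}{(j+1)^3}\,K .
\]

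Now $\tmmathbf{\varphi}_j$ is admissible for the Dirichlet cell problem on $Q_{(j+1)t}$, and the Dirichlet cell averages converge to $T_{}g_{\hom}(s,\xi)$ as $T\to\infty$ (by {\tmname{(thm)}}); evaluating that limit along the subsequence $T=(j+1)t$ and letting $j\to\infty$ in the inequality above, the error term $\tfrac{(j+1)^3-j^3}{(j+1)^3}K$ vanishes and we obtain $T_{}g_{\hom}(s,\xi)\leqslant|Q_t|^{-1}\int_{Q_t}g(y,\xi+\nabla\tmmathbf{\varphi})\,\mathd y$. Taking the infimum over $\tmmathbf{\varphi}\in H^1_{\#}(Q_t,\mathcal{V})$ and then letting $t\to\infty$ along integers — which produces the same limit as the one defining $T^{\#}_{}g_{\hom}(\xi)$ — yields $T_{}g_{\hom}(s,\xi)\leqslant T^{\#}_{}g_{\hom}(\xi)$. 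Together with the first inequality this proves \eqref{eq:homogeneoustoperiodic}.

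I expect the only genuine subtlety to be the shell estimate: the transition layer must be kept of \emph{fixed} width $t$ (not of width proportional to the side $(j+1)t$ of the big cube), so that $|\nabla\theta_j|$ stays of order $1/t$ and the overhead constant $K$ is independent of $j$, while the \emph{relative} volume $((j+1)^3-j^3)/(j+1)^3$ of that layer nonetheless tends to $0$ as $j\to\infty$. The remaining ingredients — periodic replication, density of Lipschitz maps in $H^1$, and existence of the two limits — are routine or already supplied by {\tmname{(thm)}}.
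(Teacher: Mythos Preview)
Your proof is correct, but the route you take for the reverse inequality is genuinely different from the paper's. The paper does not construct Dirichlet competitors at all: it invokes {\tmname{(thm)}} a second time. For any $\tmmathbf{\varphi}\in H^1_{\#}(Q_t,\mathcal{V})$ (the paper writes $H^1_0$ there, which is a typo) the sequence $\tmmathbf{m}_\varepsilon(x):=\xi x+\varepsilon\tmmathbf{\varphi}(x/\varepsilon)$ lies in $H^1(\Omega,\mathcal{V})$ and converges in $L^2(\Omega)$ to $x\mapsto\xi x$; the $\Gamma$-$\liminf$ inequality for $\mathcal{E}_{\hom}$ then yields
\[
|\Omega|\,T g_{\hom}(s,\xi)=\mathcal{E}_{\hom}(\xi x)\;\leqslant\;\liminf_{\varepsilon\to 0}\int_\Omega g\!\left(\frac{x}{\varepsilon},\,\xi+\nabla\tmmathbf{\varphi}\!\left(\frac{x}{\varepsilon}\right)\right)\mathd x,
\]
and since the integrand is $Q_t$-periodic the generalized {\tmname{Riemann--Lebesgue}} lemma (Proposition~\ref{lemma:genRiemLeb}) identifies the right-hand side with $|\Omega|\,|Q_t|^{-1}\int_{Q_t}g(y,\xi+\nabla\tmmathbf{\varphi})\,\mathd y$; taking the infimum over $\tmmathbf{\varphi}$ finishes in one line. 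Your replication-and-cut-off argument is instead the classical direct construction (as in \cite{muller1987homogenization,Braides1998}): it is more self-contained, since it never appeals to the $\Gamma$-limit, at the price of the shell bookkeeping. One cosmetic caveat: with the paper's convention $Q_s=[0,s]^3$ the inner cube $Q_{jt}$ shares three faces with $\partial Q_{(j+1)t}$, so no $\theta_j\in C^\infty_c(Q_{(j+1)t})$ can be identically $1$ on $Q_{jt}$; center the inner cube (or enlarge the outer one symmetrically) and your estimate goes through verbatim.
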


\begin{proof}
  Since $T_s (\mathcal{V}) \equiv \mathcal{V}$ it is obvious that $T_{}
  g_{\hom}$ does not depend on the $s$-variable. Moreover, since $H_0^1 (Q_t,
  \mathcal{V}) \subseteq H_{\#}^1 (Q_t, \mathcal{V})$ one trivially has
  $T^{\#}_{} g_{\hom} (\xi) \leqslant T_{} g_{\hom} (s, \xi)$. Let us
  therefore focus on the converse inequality. For any $\tmmathbf{\varphi} \in
  H_0^1 (Q_t, \mathcal{V})$ and any $\xi \in \mathcal{V}^3$ we consider the
  family $(\tmmathbf{m}_{\varepsilon} (x) \assign \xi x + \varepsilon
  \tmmathbf{\varphi} (x / \varepsilon)) \in H^1 (\Omega, \mathcal{V})$ which
  converges to $\xi x$ in the metric space $(H^1 (\Omega, \mathcal{V}), d_{L^2
  (\Omega, \mathcal{V})})$. By the characterizing properties of the
  $\Gamma$-limit we have:
  \begin{equation}
    T_{} g_{\hom} (s, \xi) = \frac{1}{| \Omega |} \mathcal{E}_{\hom} (\xi x)
    \leqslant \frac{1}{| \Omega |} \liminf_{\varepsilon \rightarrow 0}
    \mathcal{F}_{\varepsilon} (\tmmathbf{m}_{\varepsilon}) = \lim_{\varepsilon
    \rightarrow 0}  \frac{1}{| \Omega |} \int_{\Omega} g \left(
    \frac{x}{\varepsilon}, \xi + \nabla \tmmathbf{\varphi} \left(
    \frac{x}{\varepsilon} \right) \right) \mathd x .
  \end{equation}
  Since the integrand in the last member of the previous equation is
  $Q_t$-periodic and satisfies the hypotheses of the generalized
  {\tmname{Riemann-Lebesgue}} Lemma (cfr. Proposition \ref{lemma:genRiemLeb})
  we finish with the estimate
  \begin{equation}
    T_{} g_{\hom} (s, \xi) \leqslant \frac{1}{| Q_t |} \int_{Q_t} g (y, \xi +
    \nabla \tmmathbf{\varphi} (y)) \mathd y \hspace{1em} \forall \xi \in
    \mathcal{V}^3 \nocomma, \forall \tmmathbf{\varphi} \in H_0^1 (Q_t,
    \mathcal{V}) .
  \end{equation}
  Passing to the infimum we get the stated result.
\end{proof}

Finally, in the hope to achieve our goal of making the material contained in
this paper accessible to as large audience as possible, we prefer to state --
again in a bit general form -- and give a more
{\guillemotleft}commented{\guillemotright} proof of the following well-known
result (cfr$.$ {\cite{Braides1998}}) which permits, in the convex case, to
pass from variations that are periodic over an ensemble of $t^3$ 1-cells to
variations that are periodic in the unit cell $Q$.

\begin{proposition}
  Let $g : \RR^3 \times \RR^{3 \times 3} \rightarrow \RR^+$ be a
  Carath{\'e}odory integrand satisfying the hypotheses of {\tmname{(thm)}}
  \tmtextup{(}\tmtextup{Proposition} \tmtextup{\ref{prop:baba}}\tmtextup{)}.
  If for any $y \in \RR^3$ the function $g (y, \cdot)$ is convex, then for any
  convex subset $\mathcal{C}$ of $\RR^3$ and every $t \in \NN$ the following
  equality holds:
  \begin{equation}
    \inf_{\tmmathbf{\varphi} \in H_{\#}^1 (Q_t, \mathcal{C})} \frac{1}{| Q_t
    |} \int_{Q_t} g (y, \xi + \nabla \tmmathbf{\varphi} (y)) \mathd y =
    \inf_{\tmmathbf{\varphi} \in H_{\#}^1 (Q_{}, \mathcal{C})} \int_Q g (y,
    \xi + \nabla \tmmathbf{\varphi} (y)) \mathd y.
    \label{eq:convexhom2equality}
  \end{equation}
\end{proposition}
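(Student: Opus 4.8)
The plan is to prove the two inequalities in (\ref{eq:convexhom2equality}) separately; only the direction passing from $Q_t$-periodic to $Q$-periodic correctors uses convexity, and it will be obtained by averaging a given competitor over its $t^3$ integer translates and invoking Jensen's inequality for $g(y, \cdot)$.

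For the inequality ``$\leqslant$'' I would note that every $\tmmathbf{\varphi} \in H^1_{\#}(Q, \mathcal{C})$, once extended $Q$-periodically to $\RR^3$, is automatically $Q_t$-periodic (because $t \in \NN$) and still takes values in $\mathcal{C}$, hence belongs to $H^1_{\#}(Q_t, \mathcal{C})$. Since $g$ is $Q$-periodic in its first argument and $\nabla \tmmathbf{\varphi}$ is $Q$-periodic, the map $y \mapsto g(y, \xi + \nabla \tmmathbf{\varphi}(y))$ is $Q$-periodic, so its mean over $Q_t$ equals its mean over $Q$; taking the infimum over $H^1_{\#}(Q, \mathcal{C})$ then shows the left-hand side of (\ref{eq:convexhom2equality}) does not exceed the right-hand side.

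For the reverse inequality ``$\geqslant$'' I would fix $\tmmathbf{\psi} \in H^1_{\#}(Q_t, \mathcal{C})$, extend it $Q_t$-periodically to $\RR^3$, and set
\[
  \bar{\tmmathbf{\psi}}(y) \assign \frac{1}{t^3} \sum_{k \in \{0, 1, \ldots, t - 1\}^3} \tmmathbf{\psi}(y + k) .
\]
Convexity of $\mathcal{C}$ gives $\bar{\tmmathbf{\psi}}(y) \in \mathcal{C}$ for a.e.\ $y$, while the $Q_t$-periodicity of $\tmmathbf{\psi}$, after reindexing the sum modulo $t$ in each coordinate, yields $\bar{\tmmathbf{\psi}}(y + e_i) = \bar{\tmmathbf{\psi}}(y)$, so that $\bar{\tmmathbf{\psi}} \in H^1_{\#}(Q, \mathcal{C})$. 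Since $\xi + \nabla \bar{\tmmathbf{\psi}}(y) = \frac{1}{t^3} \sum_k (\xi + \nabla \tmmathbf{\psi}(y + k))$, convexity of $g(y, \cdot)$ together with the $Q$-periodicity $g(y + k, \cdot) = g(y, \cdot)$ gives
\[
  g(y, \xi + \nabla \bar{\tmmathbf{\psi}}(y)) \leqslant \frac{1}{t^3} \sum_{k} g(y + k, \xi + \nabla \tmmathbf{\psi}(y + k)) .
\]
Integrating over $y \in Q$, using that $(Q + k)_{k \in \{0, \ldots, t - 1\}^3}$ tiles $Q_t$ up to a null set, and recalling $|Q| = 1$ and $|Q_t| = t^3$, one obtains
\[
  \int_Q g(y, \xi + \nabla \bar{\tmmathbf{\psi}}(y)) \,\mathd y \leqslant \frac{1}{|Q_t|} \int_{Q_t} g(z, \xi + \nabla \tmmathbf{\psi}(z)) \,\mathd z ;
\]
taking the infimum over $\tmmathbf{\psi} \in H^1_{\#}(Q_t, \mathcal{C})$ gives ``$\geqslant$'', and combining the two inequalities proves (\ref{eq:convexhom2equality}).

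The routine points to be checked are that $\bar{\tmmathbf{\psi}}$ inherits $H^1$-regularity, that $y \mapsto g(y, \xi + \nabla \bar{\tmmathbf{\psi}}(y))$ is measurable (guaranteed by the Carath\'eodory hypothesis on $g$), and the elementary bookkeeping in the reindexing modulo $t$ and the tiling of $Q_t$ by unit cubes. The only place a structural hypothesis is genuinely needed is the simultaneous use of convexity of $g(y, \cdot)$, for the Jensen step, and convexity of $\mathcal{C}$, to keep the averaged corrector admissible; so the ``main obstacle'', such as it is, amounts only to making this translate-averaging construction and its $Q$-periodicity completely rigorous.
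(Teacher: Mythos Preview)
Your proof is correct and follows essentially the same approach as the paper: both obtain the ``$\leqslant$'' direction from the inclusion $H^1_{\#}(Q,\mathcal{C})\subseteq H^1_{\#}(Q_t,\mathcal{C})$ together with $Q$-periodicity, and both obtain ``$\geqslant$'' by averaging a given $\tmmathbf{\psi}\in H^1_{\#}(Q_t,\mathcal{C})$ over its $t^3$ integer translates and invoking convexity of $g(y,\cdot)$ (and of $\mathcal{C}$). The only cosmetic difference is that the paper first rewrites $\int_Q g(y,\xi+\nabla\bar{\tmmathbf{\psi}})\,\mathd y$ as $\frac{1}{|Q_t|}\int_{Q_t} g(y,\xi+\nabla\bar{\tmmathbf{\psi}})\,\mathd y$ before applying convexity, whereas you apply Jensen pointwise on $Q$ and then tile; the content is the same.
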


\begin{proof}
  One of the main ingredients of the argument, is in observing that for every
  $t \in \NN$ the following decomposition of the $t$-cube in $1$-cubes holds:
  \begin{equation}
    Q_t = \underset{(i_1, i_2, i_3) \in \NN_t^3}{{\sqcup}} Q_1^{(i_1,
    i_2, i_3)}  \hspace{1em} \text{{\tmstrong{with}}} \hspace{1em} Q_1^{(i_1,
    i_2, i_3)} \assign Q_1 + e_{(i_1, i_2, i_3)}
  \end{equation}
  and $e_{(i_1, i_2, i_3)} \assign (i_1 - 1) e_1 + (i_2 - 1) e_2 + (i_3 - 1)
  e_3$. Thus, from the $1$-periodicity of $g$ in the first variable, we arrive
  to the intuitive and well-known fundamental equality:
  \begin{align}
      \int_{Q_t} g (y, \xi + \nabla \tmmathbf{\varphi} (y)) \mathd y & =
      \sum_{(i_1, i_2, i_3) \in \NN_t^3} \int_{Q_1^{(i_1, i_2, i_3)}} g (y,
      \xi + \nabla \tmmathbf{\varphi} (y)) \mathd y \nonumber \\
      & = \sum_{(i_1, i_2, i_3) \in \NN_t^3} \int_{Q_1} g (y, \xi + \nabla
      \tmmathbf{\varphi} (y)) \mathd y \nonumber \\
      & =   | Q_t | \int_{Q_1} g (y, \xi + \nabla \tmmathbf{\varphi} (y))
      \mathd y .
   \label{eq:convexhom1}
  \end{align}
  Next, we observe that $H_{\#}^1 (Q_{}, \mathcal{C}) \subseteq H_{\#}^1 (Q_t,
  \mathcal{C})$ for every $t \in \NN$, and therefore, from the previous
  equality we get:
  \begin{align*}
      \inf_{\tmmathbf{\varphi} \in H_{\#}^1 (Q_t, \mathcal{C})} \frac{1}{| Q_t
      |} \int_{Q_t} g (y, \xi + \nabla \tmmathbf{\varphi} (y)) \mathd y &
      \leqslant  \inf_{\tmmathbf{\varphi} \in H_{\#}^1 (Q, \mathcal{C})}
      \frac{1}{| Q_t |} \int_{Q_t} g (y, \xi + \nabla \tmmathbf{\varphi} (y))
      \mathd y\\
      & = \inf_{\tmmathbf{\varphi} \in H_{\#}^1 (Q, \mathcal{C})} \int_Q g
      (y, \xi + \nabla \tmmathbf{\varphi} (y)) \mathd y .
  \end{align*}
  To complete the proof we now prove that the previous inequality is actually
  an equality, and therefore (in particular) that the left hand side of
  (\ref{eq:convexhom2equality}) does not depend on $t \in \NN$. To this end it
  is sufficient to show that for every $t \in \NN$ and every $\tmmathbf{\psi}
  \in H_{\#}^1 (Q_t, \mathcal{C})$ there exists a $\tmmathbf{\varphi} \in
  H_{\#}^1 (Q, \mathcal{C})$ such that
  \begin{equation}
    \int_Q g (y, \xi + \nabla \tmmathbf{\varphi} (y)) \mathd y \leqslant
    \frac{1}{| Q_t |} \int_{Q_t} g (y, \xi + \nabla \tmmathbf{\psi} (y))
    \mathd y.
  \end{equation}
  With this goal in mind, for every $\tmmathbf{\psi} \in H_{\#}^1 (Q_t,
  \mathcal{C})$, we define the convex combination
  \begin{equation}
    \tmmathbf{\varphi} (y) \assign \frac{1}{| Q_t |} \sum_{(i_1, i_2, i_3) \in
    \NN_t^3} \tmmathbf{\psi} (y + e_{(i_1, i_2, i_3)}) .
  \end{equation}
  Clearly $\tmmathbf{\varphi}$ belongs to $H_{\#}^1 (Q, \mathcal{C})$.
  Moreover, by the convexity and $1$-periodicity of $g$ with respect to the
  first variable, from equality (\ref{eq:convexhom1}), we get:
  \begin{align*}
      \int_Q g (y, \xi + \nabla \tmmathbf{\varphi} (y)) \mathd y & = 
      \frac{1}{| Q_t |} \int_{Q_t} g (y, \xi + \nabla \tmmathbf{\varphi} (y))
      \mathd y\\
      & \leqslant  \frac{1}{| Q_t |} \sum_{(i_1, i_2, i_3) \in \NN_t^3}
      \frac{1}{| Q_t |} \int_{Q_t} g (y, \xi + \nabla \tmmathbf{\psi} (y +
      e_{(i_1, i_2, i_3)})) \mathd y\\
      & =  \frac{1}{| Q_t |} \sum_{(i_1, i_2, i_3) \in \NN_t^3} \frac{1}{|
      Q_t |} \int_{Q_t} g (y + e_{(i_1, i_2, i_3)}, \xi + \nabla
      \tmmathbf{\psi} (y + e_{(i_1, i_2, i_3)})) \mathd y\\
      & = \frac{1}{| Q_t |} \sum_{(i_1, i_2, i_3) \in \NN_t^3} \frac{1}{|
      Q_t |} \int_{Q_t} g (y, \xi + \nabla \tmmathbf{\psi} (y)) \mathd y\\
      & =  \frac{1}{| Q_t |} \int_{Q_t} g (y, \xi + \nabla \tmmathbf{\psi}
      (y)) \mathd y,
    \end{align*}
  and this completes the proof.
\end{proof}

\end{document}